\documentclass{amsart}

\usepackage{amssymb}
\usepackage{amsmath,amsfonts,amsthm,amstext}
\usepackage{amsmath}
\usepackage{mathrsfs}  
\usepackage[all]{xy}
\usepackage{xcolor}
\usepackage{tikz-cd}

\newtheorem{theorem}{Theorem}[section]
\newtheorem{lemma}[theorem]{Lemma}

\newtheorem{prop}[theorem]{Proposition}
\newtheorem{cor}[theorem]{Corollary}

\newcommand{\Z}{\mathbb{Z}}
\newcommand{\z}{\mathbb{Z}}

\newcommand{\X}{{\mathcal X}}
\theoremstyle{definition}

\begin{document}

		\title[]{
		On the residual finiteness of the non-abelian tensor square $G \otimes G$, the non-abelian exterior square $G \wedge G$ and the weak commutativity construction $\X(G)$} \author{ 
 Ayrton Anjos Teixeira, Lucas Barroso Rocha, Dessislava H. Kochloukova}
	\address{State University of Campinas (UNICAMP), SP, Brazil \\
		\newline
		email : desi@ime.unicamp.br
		} 
	\email{}
	%\subjclass[2010]{Primary 20J05; Secondary 20F05;}
	\date{}
	\keywords{}

\begin{abstract}   We prove that if $G$ is a residually finite group,  $\widehat{G}$ is its profinite completion and the  map $H_2(G, \mathbb{Z}) $ $ \to H_2(\widehat{G}, \widehat{\mathbb{Z}}),$ induced by the canonical map  $G \to \widehat{G}$, is injective, then the non-abelian exterior square $G \wedge G$ is residually finite. 
We show that if $G$ is a finitely presented centre-by-metabelian group then  $\nu(G)$, the non-abelian tensor product  $G \otimes G$ and the non-abelian tensor square $G \wedge G$  are  residually finite.  Furthermore we prove that if $G$ be a finitely presented metabelian group then the weak commutative construction $\X(G)$ is residually finite.  We discuss a criterion for the $q$-exterior  square $G \wedge^q G$ to be residually finite. 
\end{abstract}

\maketitle

\section{Introduction}
 In this paper we study some sufficient conditions that force the non-abelian tensor product $G \otimes G$, the non-abelian exterior product $G \wedge G$  and the related groups $\nu(G)$ and $\mathcal{X}(G)$ to be residually finite. One of the approaches we adopt uses homological  methods. Homological methods were quite often used in the early days of the study of the non-abelian tensor product $G \otimes G$, defined by  Dennis \cite{D} and developed by Brown and Loday \cite{B-L1,B-L2}. 
 The non-abelian tensor product $G \otimes G$ preserves many properties of the commutator subgroup of the original group $G$. For example Brown, Johnson and Robertson showed in
 \cite{B-J-R}  that $G\otimes G$  is $\pi$-finite, nilpotent, soluble, if $G$ is  $\pi$-finite, nilpotent, soluble respectively and Moravec proved in \cite{M} that whenever $G$ is polycyclic $G \otimes G$ is polycyclic too.  Using homological methods Ellis showed in \cite{Ellis}  that if $G$ is finite then $G \otimes G$ is finite.

The non-abelian tensor product is strongly linked with the groups $\nu(G)$ and $\mathcal{X}(G)$.
Let $G$ be a group and ${G}^{\varphi}$ be an isomorphic copy of $G$. We write ${g}^{\varphi}$ for the image of $g \in G$ in ${G}^{\varphi}$. The group $\X(G)$ was defined by Sidki in \cite{Said} by the following presentation
$$\X(G) = \langle G, {G}^{\varphi} | \ [g, {g}^{\varphi}] = 1 \hbox{ for } g \in G \rangle.$$
In \cite{Said} Sidki showed that if $G$ is $\pi$-finite ($\pi$ is a set of primes), perfect, soluble, finite nilpotent, then $\X(G)$ is resp.  $\pi$-finite, perfect, soluble, finite nilpotent  too. Later similar result was proved for finitely generated  nilpotent groups \cite{G-R-S} and for  polycyclic-by-finite groups  \cite{LimaOliveira}. In \cite{B-K} Bridson and Kochloukova showed that if $G$ is finitely presented then $\X(G)$ is finitely presented.

The group $\nu(G)$ was defined by Rocco in \cite{Norai} by the following presentation
$$\nu(G) = \langle G, {G}^{\varphi} \ | \ [g_1, {g}_2^{\varphi}]^{g_3} = [g_1, {g}_2^{\varphi}]^{{g}_3^{\varphi}} = [g_1^{g_3}, (g_2^{g_3})^{\varphi}] \hbox{ for } g_1, g_2, g_3 \in G \rangle.$$
The subgroup $[G, {G}^{\varphi}] \leq \nu(G)$ is isomorphic to the non-abelian tensor product $G \otimes G$.

For a group $G$ we denote by $ \widehat{G}$ the profinite completion of $G$.

 \medskip
{\bf Theorem A} {\it Let $G$ be a discrete group  and $p$ be a prime integer.

   a1)  Suppose that $G$ is residually finite and the  map $$H_2(G, \mathbb{Z}) \to H_2(\widehat{G}, \widehat{\mathbb{Z}}),$$ induced by the canonical map  $G \to \widehat{G}$, is injective. Then $G \wedge G$ is residually finite.
 
 a2)  Suppose that $G$ is residually finite and the  map $H_2(G, \Z) \to \varprojlim_U H_2(G/U, \Z)$, induced by the canonical projections $G \to G/ U$ (for $ \widehat{G} \simeq \varprojlim G/U$, $G/ U$ finite groups) is injective. Then $G \wedge G$ is residually finite.

 b) 
  Suppose that $G$ is a finitely generated residually finite group such that $G \wedge G$ is finitely generated. Then $G \wedge G$ is residually finite if and only if the map  $$H_2(G, \mathbb{Z}) \to H_2(\widehat{G}, \widehat{\mathbb{Z}}),$$ induced by the canonical map  $G \to \widehat{G}$, is injective.
 
     c1) Suppose that $G$ is residually $p$-finite and the map $H_2(G, \Z) \to \varprojlim_U H_2(G/U, \Z)$, induced by the canonical projections $G \to G/ U$ (for $ \widehat{G}_p \simeq \varprojlim G/U$, $G/ U$ finite $p$-groups) is injective then $G \wedge G$ is residually $p$-finite.
 
 c2)  Suppose that $G$ is residually $p$-finite and    the canonical map $H_2(G, \mathbb{Z}) \to H_2(\widehat{G}_p, {\mathbb{Z}}_p)$, induced by the canonical map  $G \to \widehat{G}_p$, is injective then $G \wedge G$ is residually $p$-finite. 
 
 d) If $G/ G'$ is finitely generated and either has no 2-torsion or  $G'$ has a complement in $G$ then $G \otimes G$ is residually finite if and only if $G \wedge G$ is residually finite.
 
 e)   If $G/ G'$ is finitely generated and either has no 2-torsion or  $G'$ has a complement in $G$ then $G \otimes G$ is residually $p$-finite if and only if $G \wedge G$ is residually $p$-finite and the torsion subgroup of $\Delta(G)$ is $p$-finite. In particular,  if $G/ G'$ is finitely generated and either has no 2-torsion or  $G'$ has a complement in $G$, then   $G \otimes G$ is residually $p$-finite for every prime $p$ if and only if $\Delta(G)$ is torsion-free and $G \wedge G$ is residually $p$-finite  for every prime $p$.
 }

\medskip
 
 The notion of (cohomologically) good groups was developed by Serre in \cite{Serre}. A discrete group $G$ is called good if for the profinite completion $\widehat{G}$ of $G$ we have that the canonical map $G \to \widehat{G}$ induces an isomorphism $$H^i(\widehat{G}, A) \to H^i(G, A)$$ for every finite discrete $\widehat{G}$-module $A$ and every $i \geq 0$. 
 A group $G $ belongs to the class $\mathcal{A}_n$ if the above holds for all $i \leq n$. 
 
 We define the class  $\mathcal{A}_n^{hom}$ of groups $G$ such that the map $$ H_i(G, A) \to H_i(\widehat{G}, A)$$ is an isomorphism for $i \leq n$. A group $G$ is homologically good if  $G \in \mathcal{A}_n^{hom}$ for all $n$.  It turns out that the classes $\mathcal{A}_n$ and $\mathcal{A}_n^{hom}$ coincide, see \cite{BR-K}.

\medskip

{\bf Corollary B} {\it If  $G \in \mathcal{A}^{hom}_2$, $G$ is residually finite and $H_2(G, \z)$ is finitely generated then the map $H_2(G, \Z) \to H_2( \widehat{G}, \widehat{\Z})$ is injective and $G \wedge G$ is residually finite.}

\medskip 
In sequels \cite{AT-K},  \cite{BR-K} of this paper   we  explore consequences of Theorem A and Corollary B  and the fact that $ \mathcal{A}_2=  \mathcal{A}^{hom}_2$.

 By the results of Roseblade and Jategaonkar in \cite{J}, \cite{R} all  finitely generated abelian-by-polycyclic groups are residually finite.
In \cite{Groves}  Groves proved  that finitely presented centre-by-metabelian groups are abelian-by-polycyclic and hence such  groups are residually finite.  But when the condition of finite presentability is removed the centre-by-metabelian group is not necessary residually finite \cite{Groves2}. The methods used in the proof of the following results are homology free and  use the Bieri-Strebel invariant defined in \cite{B-S}.

\medskip
{\bf Theorem C} 
 {\it 
 Let $G$ be a finitely presented centre-by-metabelian group. Then $\nu(G)$ is a finitely generated,  abelian-by-polycyclic group with the abelian normal subgroup being characteristic. In particular  $\nu(G)$,  $G \otimes G$, $G \wedge G$  are  residually finite. }

\medskip
{\bf Theorem D} {\it 
 Let $G$ be a finitely presented metabelian group. Then $\X(G)$ is residually finite.}
 
 \medskip
 In the last section we show that the methods  used to show residual finiteness of  $G \wedge G$ could be adapted for the $q$-exterior square $G \wedge^q G$ defined by Conduche and Rodr\'iguez-Fern\'andez in \cite{C-R}.  In \cite{Ellis2} a generalisation of $\nu(G)$  related to $G \otimes^q G$ and $G \wedge^q G$ was defined. It was later denoted by $\nu^q(G)$ in \cite{Bu-Ro} and further properties of $\nu^q(G)$ were investigated in \cite{Bu-Ro}. 
 The group $\nu^q(G)$ contains $G \otimes^q G$ as a subgroup and $G \wedge^q G$ is a quotient of $G \otimes^q G$. 
 Since $G \wedge^1 G \simeq G$ we consider $G \wedge^q G$ only for $q \geq 2$.
 
 \medskip
 {\bf Proposition E} {\it Let $q \geq 2$ be an integer  and $p$ be a prime integer.
  Suppose that $G$ is a finitely generated residually finite group. Then 
  
  a) $G \wedge^q G$ is residually finite if and only if the map $H_2(G, \mathbb{F}_q) \to H_2(\widehat{G}, \mathbb{F}_q)$, induced by the canonical map  $G \to \widehat{G}$, is injective.
  
  b)  $ G \wedge^q G$ is residually $p$-finite if and only if  the map $H_2(G, \mathbb{F}_q) \to H_2(\widehat{G}_p, \mathbb{F}_q) $, induced by the canonical map  $G \to \widehat{G}$, is injective.}
\medskip

{\bf Theorem F} {\it Let $ q \geq 1$ be an integer.
Suppose that  $G$ is finitely presented, central-by-metabelian group. Then there is an abelian characteristic subgroup $C$ of $\nu^q(G)$  such that $\nu^q(G)/ C$ is polycyclic.   In particular, $\nu^q(G)$, $G \otimes^q G$ and $G \wedge^q G$ are residually finite.} 

\medskip
Theorem A is proved in section  \ref{sectA},  Corollary B is considered in section \ref{goodness}. In section  \ref{sec-final} we prove Theorem C and Theorem D is proved in section \ref{meta}. And in the final section \ref{q-tensor123} we prove Proposition E and Theorem F.

\medskip
{\bf Acknowledgements}    This study was financed in part by the Coordenação de Aperfeiçoamento de Pessoal de Nível Superior - Brasil (CAPES) - Finance Code 001 (Barroso Rocha is supported by a PhD Capes grant), Anjos Teixeira is supported by a PhD grant process No. 168757/2023-0, Conselho Nacional de Desenvolvimento Científico e Tecnológico - CNPq and      Kochloukova is partially supported by bolsa de produtividade em pesquisa CNPq 301222/2026-6 and FAPESP 2024/14914-9.

\section{Preliminaries}

 The following result of Philip Hall implies that finitely generated metabelian groups are residually finite.
 
 \begin{theorem}
  \cite{Hall2} Every finitely generated abelian-by-nilpotent group is residually finite.
  \end{theorem}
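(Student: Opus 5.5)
Hall's proof runs through commutative algebra. Let $G$ be finitely generated with an abelian normal subgroup $A$ such that $Q = G/A$ is nilpotent. Then $Q$ is a finitely generated nilpotent group, so $\mathbb{Z}Q$ is Noetherian (it is polycyclic-by-finite). Since $G$ is finitely generated and $Q$ is finitely presented, $A$ is finitely generated as a normal subgroup of $G$. Hence $A$ is a finitely generated $\mathbb{Z}Q$-module under the conjugation action.

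Now fix $1 \neq g \in G$; we need a finite-index normal subgroup avoiding $g$.

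\emph{Case $g \notin A$.} The image of $g$ in $Q$ is nontrivial. Finitely generated nilpotent groups are residually finite, so there is a finite-index normal subgroup of $Q$ missing the image of $g$. Its preimage in $G$ does the job.

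\emph{Case $g = a \in A$.} This is the substantive case, and it rests on Hall's key module-theoretic theorem: for $Q$ finitely generated nilpotent, every finitely generated $\mathbb{Z}Q$-module is residually finite as a module. Concretely, this provides a maximal submodule $M$ with $a \notin M$ and $A/M$ a finite simple module. The plan for this step is:
\begin{enumerate}
\item Use Zorn's lemma to choose a submodule $M$ maximal with respect to $a \notin M$. Then $A/M$ is a monolithic module whose monolith contains $a+M$.
\item Show that the monolith $S$ is finite. Here one uses that a simple $\mathbb{Z}Q$-module with $Q$ finitely generated nilpotent is finite; this is Hall's theorem that irreducible modules over such group rings are finite, proved via the Nullstellensatz-type argument that $\mathbb{Z}Q/\mathrm{Ann}$ is a finitely generated ring which is a field.
\item Pass from the finite monolith to finiteness of $A/M$ itself, by showing $A/M$ is annihilated by a power of the maximal ideal annihilating $S$. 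This uses Hall's Artin--Rees-type property for $\mathbb{Z}Q$ with $Q$ nilpotent (ideals generated by central-ish elements are polycentral).
\end{enumerate}

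With $M$ in hand, $M$ is normal in $G$ and $G/M$ is finite-by-(finitely generated nilpotent). Write $A/M = F$, which is finite. The centralizer $C_G(F)$ has finite index and contains $A$, and $C_G(F)/M$ is central-by-nilpotent over $F$. Choosing a finite-index normal subgroup $N \le C_G(F)$ with $N \cap A \subseteq M$ can then be done by the standard argument: a finitely generated nilpotent-by-finite group with finite normal subgroup $F$ is residually finite, since it is polycyclic-by-finite. Indeed, $G/M$ is finite-by-polycyclic, hence polycyclic-by-finite, and Hirsch's theorem makes it residually finite. The image of $a$ in $G/M$ is nontrivial, so some finite quotient of $G/M$ separates it.

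The main obstacle is step~3: extending finiteness from the simple socle to the whole of $A/M$. This requires the Artin--Rees property of $\mathbb{Z}Q$, which is exactly where nilpotency (rather than mere polycyclicity) of $Q$ is used. I would first try to prove it by induction on the Hirsch length of $Q$, using a central element $z$ and the polynomial-like structure of $\mathbb{Z}Q$ over $\mathbb{Z}\langle z \rangle$.
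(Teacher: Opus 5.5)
The paper gives no proof of this statement; it is quoted from Hall. The only trace of Hall's argument in the paper is in the proof of Theorem D, which uses his reduction of residual finiteness to finiteness of monolithic quotients (Lemma 1) and his theorem that monolithic finitely generated abelian-by-nilpotent groups are finite (Theorem 1*). Your outline carries out the same strategy on the module $A$: take $M$ maximal with $a\notin M$, show the monolithic module $A/M$ is finite, and conclude because $G/M$ is finite-by-nilpotent, hence polycyclic-by-finite. That architecture is right, but three points need repair.

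\emph{The ``Concretely'' sentence.} The claim that residual finiteness of $A$ gives a maximal submodule $M$ with $a\notin M$ and $A/M$ simple is false. Take $Q=1$, $A=\mathbb{Z}/p^2\mathbb{Z}$ and $a=p$: then $a$ lies in the unique maximal submodule. Your Steps~1--3 produce something different, and that is all you need: a submodule of finite index with monolithic, not simple, quotient.

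\emph{Step~2.} The ring $\mathbb{Z}Q/\mathrm{Ann}(S)$ is a finite simple ring, not in general a field. For example, $Q_8$ acting irreducibly on $\mathbb{F}_3^2$ gives $M_2(\mathbb{F}_3)$. Hall's Nullstellensatz argument is applied instead to the image of $\mathbb{Z}[z,z^{-1}]$ inside the division ring $\mathrm{End}_{\mathbb{Z}Q}(S)$, where $z$ is a central element of infinite order. A generic-freeness lemma shows this image is a field; being finitely generated as a ring, it is finite. Hence a power of $z$ acts trivially, and one inducts on Hirsch length. So nilpotency is used here as well, not only in the Artin--Rees step.

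\emph{Step~3.} This is the substantive point. You want the Artin--Rees property for $\mathfrak{m}=\mathrm{Ann}_{\mathbb{Z}Q}(S)$. However, $\mathfrak{m}$ is not presented by central-ish generators, so your parenthetical justification does not apply to it. That it nevertheless has the property follows from a later and harder theorem of Roseblade and Smith.

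The clean fix is to use a smaller ideal that is visibly polycentral.
\begin{enumerate}
\item $S$ is an elementary abelian $p$-group for some prime $p$, and $Q_0=C_Q(S)$ is normal of finite index in $Q$.
\item Put $J=p\,\mathbb{Z}Q+\omega(\mathbb{Z}Q_0)\mathbb{Z}Q$, the kernel of $\mathbb{Z}Q\to\mathbb{F}_p[Q/Q_0]$. It has finite index and $SJ=0$.
\item Refine the series $Q_0\cap\gamma_i(Q)$ to a $Q$-central series of $Q_0$ with cyclic factors $\langle x_i\rangle$, ordered from the bottom. Then $J=(p,x_n-1,\dots,x_1-1)$, and each generator is central modulo the ideal generated by the preceding ones. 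So $J$ is polycentral and has the Artin--Rees property in the Noetherian ring $\mathbb{Z}Q$.
\item Hence there is $k$ with $(A/M)J^k\cap S\subseteq SJ=0$. Since $(A/M)J^k$ is a submodule and $A/M$ is monolithic, $(A/M)J^k=0$.
\item Because $\mathbb{Z}Q$ is Noetherian and $\mathbb{Z}Q/J$ is finite, each $J^i/J^{i+1}$ is finite. So $\mathbb{Z}Q/J^k$ is finite, and the finitely generated module $A/M$ is finite.
\end{enumerate}

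With these corrections your outline is a complete proof. It rests on two ingredients: Hall's finiteness theorem for irreducible $\mathbb{Z}Q$-modules, and the Artin--Rees property of polycentral ideals in Noetherian rings.
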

  
  The above was later generalised by Jategaonkar and by Roseblade.
  
  \begin{theorem} \cite{J}, \cite{R}   Every finitely generated abelian-by-polycyclic group is residually finite.
  \end{theorem}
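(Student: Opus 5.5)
The statement is the theorem of Jategaonkar and Roseblade: every finitely generated abelian-by-polycyclic group is residually finite. Let $G$ be finitely generated with $A \trianglelefteq G$ abelian and $Q = G/A$ polycyclic. Then $A$ is a module over the group ring $\Z Q$. Since $G$ is finitely generated and $Q$ is finitely presented (polycyclic groups are), $A$ is finitely generated as a $\Z Q$-module. By Hall's theorem, $\Z Q$ is Noetherian, so $A$ is a Noetherian $\Z Q$-module.

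Given $1 \neq g \in G$, there are two cases. If $g \notin A$, we use that $Q$ is residually finite (Hirsch). We take $N \trianglelefteq Q$ of finite index with $gA \notin N$, and its preimage in $G$ separates $g$. The real case is $1 \neq a \in A$. Here the goal is a $G$-invariant subgroup $B \le A$ of finite index with $a \notin B$. Given such $B$, the quotient $G/B$ is finitely generated finite-by-polycyclic, hence residually finite. This holds because $G/B$ contains a polycyclic-by-finite subgroup of finite index, namely the centraliser of $A/B$, which is (finite central)-by-polycyclic and hence polycyclic-by-finite. We then separate $aB$ from $1$ in $G/B$.

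To find $B$, first replace $A$ by a submodule $M$ maximal with respect to $a \notin M$; this exists by the Noetherian property. Then $A/M$ is a monolithic $\Z Q$-module whose essential socle contains the image of $a$. It suffices to show that such a module is finite. If its essential submodule is $\Z$-torsion-free, reduce modulo a prime $p$ chosen so that $aA$ survives in $A/pA$ (possible since finitely generated modules over $\Z Q$ satisfy a Krull intersection type property $\bigcap_p pA$ small). This reduces to $\mathbb{F}_p Q$-modules, which are the key case.

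The main obstacle is therefore Roseblade's theorem: a simple (or irreducible) finitely generated $kQ$-module, with $k$ a finite field (or $\Z$) and $Q$ polycyclic-by-finite, is finite. Equivalently, a monolithic Noetherian module with essential finite socle is finite. I would argue by induction on the Hirsch length, following Roseblade's approach. Pass to a finite-index subgroup $Q_0$ which is poly-$\Z$ with a normal series having free abelian factors. Take the last nontrivial term $N \cong \Z^n$, and consider $A$ as a $kN$-module via centralisers of irreducible submodules. Using Roseblade's result that $G$-orbital (prime) ideals of $kN$ controlling the module have finite-index "isolator" stabilisers, the irreducible module is induced-up-or-finite over $N$. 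Then $N$ acts through a finite quotient, and induction applies to $Q/N$. This step, which amounts to controlling the annihilators in $kA$ for $A$ abelian normal (Roseblade's "$G$-orbital" theory), is where the genuine difficulty lies. In practice I would cite it rather than reprove it, since the paper simply quotes \cite{J}, \cite{R}.
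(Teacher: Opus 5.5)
The paper gives no proof of this statement. It quotes it from \cite{J}, \cite{R}, so your final decision to cite it agrees with the paper. Your preliminary reductions are correct and are the classical ones:
$A$ is a Noetherian $\Z Q$-module; elements outside $A$ are separated using residual finiteness of $Q$; it suffices to find a $G$-invariant $B\le A$ of finite index with $a\notin B$, since $G/B$ is then polycyclic-by-finite; and taking $M$ maximal with $a\notin M$ makes $A/M$ monolithic with simple monolith $\Z Q\bar a$.

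The gap is the word ``Equivalently''. Finiteness of simple $\Z Q$-modules rules out a torsion-free monolith, because a simple torsion-free module is a nonzero $\mathbb{Q}$-vector space and hence infinite. So no reduction mod $p$ is needed in that case. None is possible either: for $M$ maximal, $a\notin M+pA$ already forces $pA\subseteq M$. In the remaining case, finiteness of simple modules only tells you that the monolith of $A/M$ is finite.

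Moreover, $A/M$ is in general annihilated only by some $p^k$, not by $p$. Take $A=\Z/4\Z$ with trivial $Q$-action and $a=2$. This example also gives $0\neq 2\in\bigcap_p pA$, so your intersection claim over primes fails in the presence of torsion. Hence you do not land in $\mathbb{F}_pQ$-modules.

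What you actually need is that a finitely generated $\Z Q$-module with an essential finite simple submodule is finite. This is not a formal consequence of the simple case. The Pr\"ufer group $\Z[1/p]/\Z$ has essential socle $\Z/p\Z$ yet is infinite, so finite generation has to be exploited through a Krull-intersection or Artin--Rees type property. For polycyclic $Q$ this is the substantive content of \cite{J}, \cite{R}: finitely generated essential extensions of simple $\Z Q$-modules are Artinian, hence finite.

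Your Hirsch-length sketch concerns only simple modules, so the argument as written is incomplete. It closes only if you cite this essential-extension theorem, or the residual finiteness theorem itself as the paper does.
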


It is worth noting that there is a finitely generated soluble non-metabelian group that is not residually finite, see \cite{Hall}.

\begin{theorem} \cite{Segal} \label{segal}
Let $\Gamma$ be a finitely generated group with an abelian normal subgroup
$A$ such that $\Gamma /A$ is polycyclic.

(i) If $A$ is a $p$-group for some prime $p$, then $\Gamma$ has a normal subgroup of finite index which is residually a finite $p$-group.

(ii) If $A$ is torsion-free, then for almost all primes $p$, $\Gamma$ has a normal subgroup of finite index which is residually a finite $p$-group
\end{theorem}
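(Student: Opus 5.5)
This is Segal's theorem, cited from the literature. If one wants a proof, the natural route is through the structure theory of finitely generated modules over group rings of polycyclic groups.

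Write $Q = \Gamma/A$. Then $A$ is a finitely generated $\mathbb{Z}Q$-module, since $\Gamma$ is finitely generated and $\mathbb{Z}Q$ is Noetherian (Hall). First pass to a finite-index normal subgroup of $\Gamma$ whose image in $Q$ is poly-(infinite cyclic), hence torsion-free. For (i) we may also assume that $A$ has finite exponent $p^k$, because a finitely generated module over the Noetherian ring $\mathbb{Z}Q$ that is $p$-torsion has bounded exponent.

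For (i) the key steps are as follows.

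\begin{enumerate}
\item Filter $A$ by the submodules $p^jA$. Each layer is a finitely generated $\mathbb{F}_pQ$-module.
\item Invoke Roseblade's theorem: for a finitely generated $kQ$-module $M$ over a finite field, there is a finite-index subgroup $Q_0\le Q$ such that every irreducible image of $M$ restricted to $Q_0$ is finite, and $M$ is residually finite as a module.
\item Deduce a finite-index normal subgroup $\Gamma_0$ of $\Gamma$ such that $\Gamma_0$ acts unipotently on each finite $\Gamma_0$-quotient of $A$. The mechanism: over $\mathbb{F}_p$, irreducible finite modules of a group whose image is a $p$-group must be trivial. So one arranges $Q_0$ to act on finite quotients through groups that are pro-$p$ up to bounded index, using that $Q$ is virtually residually $p$ (polycyclic groups are virtually residually $p$ for every $p$).
\item Since $A$ is residually finite as a module, combine this with the residual $p$-finiteness of a suitable finite-index subgroup of $Q$. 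Separate elements outside $A$ through $Q$. Separate elements of $A$ through quotients $\Gamma_0/B$ with $A/B$ a finite $p$-group on which $\Gamma_0$ acts unipotently. Such a quotient is an extension of a finite $p$-group by a residually $p$ polycyclic group, and taking a further $p$-power-index subgroup that acts trivially gives a finite $p$-quotient.
\end{enumerate}

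For (ii), with $A$ torsion-free, the plan is to show that $A/A^{p^n}$ embeds residually for almost all $p$, i.e.\ $\bigcap_n p^nA=0$. This comes from Krull-type intersection theorems over $\mathbb{Z}Q$ (Roseblade / Jategaonkar). For almost all primes $p$, it must also avoid the finitely many primes where $A$ fails to be "$p$-good", for example where $A/pA$ behaves badly relative to an associated prime of $A\otimes\mathbb{Q}$. Then one applies the argument of (i) to each $A/p^nA$, uniformly in $n$, to get a single $\Gamma_0$ of finite index.

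The main obstacle is step 3 done uniformly. One needs a single finite-index $\Gamma_0$ working for all finite quotients of $A$ simultaneously, which requires Roseblade's deep results on the finiteness of irreducible modules and on centralizers of their annihilators. In (ii) there is the additional task of controlling the exceptional primes; I would try to handle this via generic freeness of $A\otimes\mathbb{Z}[1/m]$ over a commutative subring. Since the paper simply cites \cite{Segal}, the honest approach is to quote it rather than reprove it.
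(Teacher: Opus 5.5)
The paper gives no proof of this statement. It is quoted from Segal and used only as a black box, in the corollary after Theorem D, so your closing advice to cite it is exactly what the paper does. The outline you offer as the ``natural route'', however, fails at Step~3, which is false as stated: in general no finite-index normal subgroup $\Gamma_0$ acts unipotently on \emph{every} finite $\Gamma_0$-quotient of $A$.

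Here is a counterexample. Take $\Gamma = C_p \wr \mathbb{Z}$, so $A = \mathbb{F}_p[x^{\pm 1}]$.
\begin{itemize}
\item Any finite-index normal $\Gamma_0$ acts on $A$ through $\langle x^m\rangle$ for some $m \ge 1$.
\item Choose a prime $\ell \ne p$ with $\ell \nmid m$, and let $f$ be the minimal polynomial over $\mathbb{F}_p$ of a primitive $\ell$-th root of unity $\alpha$.
\item Then $A/f(x)A \cong \mathbb{F}_p(\alpha)$ is a finite $\Gamma$-quotient. On it, $x^m$ acts as multiplication by $\alpha^m \neq 1$, an automorphism of order $\ell$ prime to $p$, hence not unipotent.
\end{itemize}
Yet $C_p \wr \mathbb{Z}$ is residually $p$, via the specific quotients $A/(x^{p^n}-1)A = A/(x-1)^{p^n}A$. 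So your mechanism is circular. Showing that the image of $\Gamma_0$ in $\mathrm{Aut}(A/B)$ is a $p$-group \emph{is} the unipotence you need, and here the $p'$-parts of these images are unbounded. Consequently Step~4 cannot combine residual finiteness of $A$ as a module (Jategaonkar--Roseblade) with Step~3. The finite quotients that separate a given element may be exactly the non-unipotent ones.

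What is actually required is a statement about one particular filtration. Let $A_0 = A\cap\Gamma_0$, and let $\mathfrak{a}$ be the ideal of $\mathbb{Z}[\Gamma_0/A_0]$ generated by $p$ and the augmentation ideal. You need a finite-index $\Gamma_0$ with $\bigcap_n A_0\mathfrak{a}^n = 0$. This condition is necessary: any finite $p$-quotient of $\Gamma_0$ acts unipotently on the image of $A_0$. It is a Krull-type intersection theorem for the augmentation ideal, strictly stronger than residual finiteness of the module, and it is where the deep input from Roseblade's work has to enter.

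In (ii) this intersection is precisely what fails at the exceptional primes. For example, in $\mathbb{Z}[1/p]\rtimes\mathbb{Z}$ with the generator acting by $p$, we have $A = pA$, so $A_0\mathfrak{a} = A_0$. So applying (i) ``uniformly in $n$'' is the whole difficulty, not a technicality.

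Even granting the intersection statement, the last sentence of Step~4 is not automatic. From $\Gamma_0/B$ being finite-$p$-by-(residually $p$) with unipotent action, you still have to produce a normal subgroup of $p$-power index in $\Gamma_0$ whose intersection with $A$ lies in $B$. That is an extension problem and needs its own argument, for instance a more careful choice of $\Gamma_0$.
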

 
 \iffalse{By \cite{H} polycyclic groups are residually finite.}\fi
 
 \subsection{Structure of $G \otimes G$ and $\nu(G)$} \label{intro-non-abelian} 
 
 We identify $G \otimes G$ with the normal subgroup $[G, G^{\varphi}]$ of $\nu(G)$.
 
 We write $W_0(G)$ for the kernel of the epimorphism  $G \otimes G \to G'$ that sends $g_1 \otimes g_2$ to $[g_1, g_2]$. We write $\Delta(G)$ for the subgroup of $G \otimes G$ generated by $\{ g \otimes g ~ | ~ g \in G \}$.

 Recall the definition of the non-abelian exterior  square $G \wedge G$ as the quotient  $G \otimes G/ \Delta(G)$. The commutator map  $G \wedge G \to G'$ induces a map  $( G \wedge G) ^{ ab} \to (G')^{ ab}$. By definition $$D_0(G) = [G, {G}^{\varphi}],   \ L_0(G) = \langle {g}^{\varphi} g^{ -1} \ | \ g \in G \rangle \hbox{ and }W_0(G) = D_0(G) \cap L_0(G)$$ are normal subgroups of $\nu(G)$. For simplicity we write $L_0,D_0$ and $W_0$ for $L_0(G),$$D_0(G)$ and $W_0(G)$ when it is clear from the context which is the group $G$.

 Recall that 
$$\X(G) = \langle G, {G}^{\varphi} | \ [g, {g}^{\varphi}] = 1 \hbox{ for } g \in G \rangle$$ 
 There is a homomorphism $$\rho : \mathcal{X} (G) \to G \times G \times G$$
 with kernel $W(G) = L(G) \cap D(G)$, where  $$D(G) = [G, {G}^{\varphi}],  \ L(G) = \langle {g}^{\varphi} g^{ -1} \ | \ g \in G \rangle$$ are normal subgroups of $\X(G)$. By definition $\rho(g) = (g,g,1)$ and $\rho(g^{\varphi}) = ( 1,g,g)$.

Similarly there is a homomorphism  $$\rho_0 : \nu(G) \to G \times G \times G$$
 with kernel $W_0(G)$ defined by $\rho_0(g) = (g,g,1)$ and $\rho_0(g^{\varphi}) = ( 1,g,g)$. Thus $Im (\rho) = Im (\rho_0)$.

 By   \cite[Lemma 2.2]{Norai2}  for $R(G) = [G, L(G), G^{\varphi}]$ we have $$W(G)/ R(G) \simeq H_2(G, \mathbb{Z})$$ 
As pointed out in \cite[p.68-69]{Norai} $$\nu(G)/ \Delta(G) \simeq \mathcal{X} (G)/ R(G)$$ with $W_0(G)/ \Delta(G) \simeq W(G)/ R(G)$, hence
$$W_0(G)/ \Delta(G) \simeq H_2(G, \mathbb{Z}).$$
 
 By \cite{B-L1}, \cite{B-L2} $\Delta(G)$ is a quotient of $\Gamma(G/ G')$, where $\Gamma$ denotes the Whitehead quadratic functor and $$G \wedge G = (G \otimes G) / \Delta(G)$$ maps surjectively to $G'$ with kernel isomorphic to the Schur multiplier $M(G) = H_2(G, \mathbb{Z})$. In particular since $\Gamma(G/ G')$ is finitely generated if $G/ G'$ is finitely generated, we conclude that $\Delta(G)$ is finitely generated. Furthermore as pointed out in \cite[p.68-69]{Norai} $\Delta(G)$ is central in $\nu(G)$.
 
We quote some  general results on $G \wedge G$ and $G \otimes G$.
 
 \begin{theorem} \cite{B-F-M} \label{wedge}  Let $G$ be a group and $F$ be a free group such that $G \simeq F/ R$ for some normal subgroup $R$ of $F$. Then $G \wedge G \simeq F'/ [F, R]$.
 \end{theorem}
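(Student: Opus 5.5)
We will construct mutually inverse homomorphisms between $G \wedge G$ and $F'/[F,R]$. Throughout, $\pi : F \to G$ denotes the projection with kernel $R$, and for $g \in G$ we write $\tilde g \in F$ for an arbitrary lift.

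\emph{Step 1: a map $\alpha : G \wedge G \to F'/[F,R]$.} Set $\alpha(g \wedge h) = [\tilde g, \tilde h][F,R]$.
\begin{itemize}
\item \emph{Independence of lifts.} For $r \in R$ the commutator identities give $[\tilde g r, \tilde h] = [\tilde g, \tilde h]^r [r, \tilde h]$. Here $[r,\tilde h] \in [F,R]$, and $[\tilde g,\tilde h]^r = [\tilde g,\tilde h]\,[[\tilde g,\tilde h], r] \in [\tilde g,\tilde h][F,R]$. The second variable is handled symmetrically.
\item \emph{An action of $G$.} The same computation shows that $G$ acts on $F'/[F,R]$ by conjugation through lifts, since $R$ acts trivially modulo $[F,R]$.
\item \emph{Defining relations.} With this action, the relations of the non-abelian tensor square, namely $gg' \otimes h = (g' \otimes h)^{g}(g \otimes h)$ and its mirror (in the commutator convention used for $\nu(G)$), become the standard identities $[ff',k] = [f',k]^{f}[f,k]$ and its mirror in $F$. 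Hence $\alpha$ is a homomorphism on $G \otimes G$.
\item \emph{Factoring through $G \wedge G$.} We have $\alpha(g \otimes g) = [\tilde g, \tilde g][F,R] = 1$, so $\Delta(G)$ is killed and $\alpha$ factors through $G \wedge G$.
\item \emph{Surjectivity.} $F'$ is generated by commutators $[f,k]$, so $\alpha$ is onto.
\end{itemize}

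\emph{Step 2: an inverse map.} We first apply the facts quoted above from \cite{B-L1}, \cite{B-L2} to the free group $F$. The commutator map $F \wedge F \to F'$ is surjective with kernel isomorphic to $H_2(F,\Z) = 0$. Hence $F \wedge F \cong F'$ via $f \wedge k \mapsto [f,k]$. Functoriality of $\wedge$ applied to $\pi$ gives an epimorphism $F \wedge F \to G \wedge G$, $f \wedge k \mapsto \pi(f) \wedge \pi(k)$. Composing, we obtain $\beta_0 : F' \to G \wedge G$ with $\beta_0([f,k]) = \pi(f) \wedge \pi(k)$. For $r \in R$ we get $\beta_0([f,r]) = \pi(f) \wedge 1 = 1$, and $[F,R]$ is generated by such elements as a subgroup. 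Indeed, $[f,r]^x = [f^x, r^x]$ with $r^x \in R$, so these generators are closed under conjugation. Therefore $\beta_0$ induces $\beta : F'/[F,R] \to G \wedge G$.

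\emph{Step 3: the two maps are inverse.} On generators, $\beta\alpha(g \wedge h) = \pi(\tilde g) \wedge \pi(\tilde h) = g \wedge h$ and $\alpha\beta([f,k][F,R]) = [f,k][F,R]$. Both composites are the identity on generating sets, so $\alpha$ is an isomorphism.

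The main obstacle is Step 2, specifically the identification $F \wedge F \cong F'$. It rests on the Brown--Loday exact sequence $0 \to H_2(G,\Z) \to G \wedge G \to G' \to 1$, quoted above, together with $H_2(F,\Z)=0$. Without this input one would have to build a map out of $F'$ directly, which is awkward because $F'$ is not freely generated by the commutators $[f,k]$. If one prefers to avoid it, the fallback is a five-lemma argument. It compares the extensions $(R \cap F')/[F,R] \hookrightarrow F'/[F,R] \twoheadrightarrow G'$ and $H_2(G,\Z) \hookrightarrow G \wedge G \twoheadrightarrow G'$ using Hopf's formula. That route, however, requires checking that $\alpha$ restricts to the Hopf isomorphism on kernels, which is less transparent.
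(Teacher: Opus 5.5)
Your proof is correct. The paper gives no argument for this statement: it quotes it from \cite{B-F-M}, alongside the Brown--Loday fact (also quoted in the preliminaries) that $G \wedge G \to G'$ is onto with kernel $H_2(G,\mathbb{Z})$. So the comparison is with how one would derive the statement from that fact.

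The obvious route is your fallback. One compares the extensions $(R\cap F')/[F,R] \hookrightarrow F'/[F,R] \twoheadrightarrow G'$ and $H_2(G,\mathbb{Z}) \hookrightarrow G\wedge G \twoheadrightarrow G'$ and applies the five lemma. That requires knowing the induced map on kernels is an isomorphism, i.e.\ that the Brown--Loday identification is compatible with Hopf's formula.

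Your explicit inverse pair avoids that compatibility question. You invoke the Brown--Loday sequence only for the free group $F$, where the kernel is $H_2(F,\mathbb{Z})=0$. This gives $F\wedge F\cong F'$ with nothing to check. The remaining steps are elementary and you handle them correctly:
\begin{itemize}
\item independence of lifts modulo $[F,R]$;
\item the tensor relations going to commutator identities;
\item $[F,R]$ being generated as a subgroup by the $[f,r]$;
\item $g\wedge 1=1$.
\end{itemize}

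Be aware, though, that all the depth sits in $F\wedge F\cong F'$, the free case of the Miller/Brown--Loday theorem. Your $\alpha$ commutes with the commutator maps to $G'$. Hence your isomorphism plus Hopf's formula gives back $\ker(G\wedge G\to G')\cong H_2(G,\mathbb{Z})$ for every $G$. So you have reduced the general statement to its free case, not proved it from scratch.

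One cosmetic point concerns conventions. The identity $[ff',k]=[f',k]^{f}[f,k]$ you quote is the left-action form, with $x^f=fxf^{-1}$ and $[a,b]=aba^{-1}b^{-1}$. The paper uses the right-action convention for $\nu(G)$. There it reads $[ff',k]=[f,k]^{f'}[f',k]$, matching the relation $gg'\otimes h=(g^{g'}\otimes h^{g'})(g'\otimes h)$. Either convention works if used consistently.
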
 
 
 \begin{theorem} \cite{B-F-M} \label{2tor}
 Let $G$ be a group such that $G/ G'$ is finitely generated. If $G/ G'$ has no
elements of order 2, or if $G'$ has a complement in $G$, then $$G \otimes G \simeq \Delta(G) \times (G \wedge G).$$
 \end{theorem}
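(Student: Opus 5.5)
The plan is to exploit the fact, recalled above, that $\Delta(G)$ is central in $\nu(G)$ and hence in $G \otimes G$. A central subgroup $\Delta$ of a group $T$ with a homomorphic retraction $r : T \to \Delta$ (that is, $r|_\Delta = \mathrm{id}$) gives $T \simeq \Delta \times \ker r$. Indeed, $t \mapsto (r(t), t\,r(t)^{-1})$ is an isomorphism, and centrality is exactly what makes this map a homomorphism. Since $\ker r \simeq T/\Delta$, in our situation we get $\ker r \simeq G \wedge G$. So everything reduces to constructing a homomorphism $r : G \otimes G \to \Delta(G)$ which restricts to the identity on $\Delta(G)$.

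Put $A = G/G'$, which is finitely generated abelian. The natural map $G \otimes G \to A \otimes_{\mathbb{Z}} A$, $g_1 \otimes g_2 \mapsto \bar g_1 \otimes \bar g_2$, is a homomorphism, because the actions of $G$ become trivial after abelianising. It sends $\Delta(G)$ onto the subgroup $S(A) \leq A \otimes A$ generated by the elements $a \otimes a$. By the Brown--Loday description, $\Delta(G)$ is the image of $\Gamma(A)$ under $\gamma(\bar g) \mapsto g \otimes g$.

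The first key step is to show that the composite $\Gamma(A) \to \Delta(G) \to S(A)$ is injective. This would give $\Delta(G) \simeq S(A)$, mapped isomorphically by the natural map. The second key step is to show that $S(A)$ is a direct summand of $A \otimes A$. Given both, $r$ is the composite of $G \otimes G \to A \otimes A$, a projection onto $S(A)$, and the inverse isomorphism $S(A) \to \Delta(G)$.

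For the direct summand property, I would write $A = \bigoplus_i \langle a_i \rangle$ as a sum of cyclic groups. Then $A \otimes A = \bigoplus_{i,j} \langle a_i \rangle \otimes \langle a_j \rangle$. Also $S(A)$ is generated by the $a_i \otimes a_i$ together with the $a_i \otimes a_j + a_j \otimes a_i$ for $i<j$. Since $\langle a_i\rangle \otimes \langle a_j\rangle \simeq \langle a_j\rangle \otimes \langle a_i\rangle$ is cyclic, the span of $\{a_i\otimes a_j + a_j \otimes a_i,\ a_j\otimes a_i\}$ equals the span of $\{a_i\otimes a_j, a_j\otimes a_i\}$. Hence the elements $a_i \otimes a_j$ with $i<j$ span a complement, and this step is routine.

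The injectivity of $\Gamma(A) \to S(A)$ is where the two hypotheses split.

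\emph{Case 1: $A$ has no $2$-torsion.} I would use the decomposition
\[
\Gamma(B\oplus C) = \Gamma(B)\oplus\Gamma(C)\oplus (B\otimes C),
\]
under which the cross term maps by $b\otimes c \mapsto b\otimes c + c\otimes b$, which is injective. This reduces the question to cyclic $A$. For $\Gamma(\mathbb{Z}) = \mathbb{Z}$ and for $\Gamma(\mathbb{Z}/n) = \mathbb{Z}/n$ with $n$ odd, the map $\gamma(a) \mapsto a \otimes a$ is an isomorphism onto $A \otimes A$. (For $n$ even one has $\Gamma(\mathbb{Z}/n)=\mathbb{Z}/2n$, and injectivity fails; this is why the hypothesis is needed.)

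\emph{Case 2: $G = G' \rtimes Q$ with $Q \simeq A$.} Here I would avoid $\Gamma$ altogether. The inclusion $Q \to G$ and the projection $G \to Q$ induce homomorphisms $Q \otimes Q \to G \otimes G \to Q \otimes Q$ whose composite is the identity. Here $Q\otimes Q$ is the ordinary tensor product, since $Q$ is abelian with trivial actions. Since every element of $\Delta(G)$ is a product of elements $q\otimes q$ (they generate $\Delta(G)$, as $\Delta(G)$ is the image of $\Gamma(A)$), the first map carries $S(Q)$ onto $\Delta(G)$. The second map then restricts to the inverse, so $\Delta(G)\simeq S(Q)$ via the projection. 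Now the direct-summand argument above, applied to $Q$, produces $r$.

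I expect the main obstacle to be Case 1: one must verify carefully that the Whitehead functor decomposes as stated and that the cyclic computations are right. This is the only place where the absence of $2$-torsion enters.
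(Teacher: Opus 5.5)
Your argument is correct and follows the same route as the proof the paper relies on (sketched in the Remark after the theorem). The natural map $G\otimes G\to G/G'\otimes G/G'$ identifies $\Delta(G)$ with $\Delta(G/G')$, via injectivity of $\Gamma(G/G')\to\Delta(G/G')$ when there is no $2$-torsion, or via the splitting $Q\otimes Q\to G\otimes G\to Q\otimes Q$ when $G'$ has a complement $Q$; the direct-summand decomposition of $G/G'\otimes G/G'$ then pulls back to a retraction of $G\otimes G$ onto the central subgroup $\Delta(G)$. In the complement case you rightly obtain only $\Delta(G)\simeq\Delta(G/G')$, whereas the paper's Remark also claims $\Delta(G)\simeq\Gamma(G/G')$, which fails when $G/G'$ has $2$-torsion (for $G=\mathbb{Z}/2$ one has $\Delta(G)\simeq\mathbb{Z}/2$ but $\Gamma(\mathbb{Z}/2)\simeq\mathbb{Z}/4$).
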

 
 {\bf Remark} The proof of the above theorem shows that under the conditions of Theorem \ref{2tor} $\Delta(G) \simeq \Delta(G/ G') \simeq \Gamma(G/ G')$.
 
 \medskip
 We observe that in \cite{B-F-M} the authors use the notation $\nabla(G)$ for $\Delta(G)$.
 The condition on the absense of 2-torsion in $G/ G'$  in the statement of Theorem \ref{2tor} comes from the fact that in this case the map $\Gamma(G/G') \to \Delta(G/ G')$ is injective, where $\Gamma(G/ G')$ is the Whitehead quadratic functor, and this implies an isomorphism $\Delta(G/ G') \simeq \Delta(G)$. For more details on the Whitehead quadratic functor we refer the reader to section 1 from \cite{M-M-O}.
 
\subsection{The Bieri-Strebel criterion for finitely presented metabelian groups}
  
 We recall first some properties of the first $\Sigma$-invariant as defined in \cite{B-S}.
 Let $Q$ be a finitely generated abelian group. By definition the character sphere $$S(Q) = Hom(Q, \mathbb{R}) \setminus \{ 0 \}/ \sim$$ where for $\chi_1, \chi_2 \in Hom(Q, \mathbb{R}) \setminus \{ 0 \}$ we have $\chi_1 \sim \chi_2$ if and only if there is a positive real number $r$ such that $\chi_1 = r \chi_2$. 
 
 If $Q \simeq \mathbb{Z}^n \oplus F$, where $F$ is finite, we can identify $S(Q)$ with the unit sphere $S^{n-1}$. The equivalence class $\mathbb{R}_{>0} \chi$ is denoted by $[\chi]$.
 
 Let $M$ be a finitely generated $\mathbb{Z} Q$-module. By definition
 $$
 \Sigma_M(Q) = \{ [\chi] \ | \ M \hbox{ is finitely generated as } \mathbb{Z} Q_{\chi}-\hbox{module} \}
 $$
  where $Q_{\chi} = \{ q \in Q \ | \ \chi(q) \geq 0 \}$.
 The complement $S(Q) \setminus \Sigma_M(Q)$ is denoted by $\Sigma_M^c(Q)$.  
 
 \begin{lemma} \cite[Prop. 2.2]{B-S} \label{ttt}
a) Let $ 0 \to M_1 \to M \to M_2 \to 0$ be a short exact sequence of finitely generated  $\mathbb{Z} Q$-modules. Then
 $$\Sigma_M^c(Q) = \Sigma_{M_1}^c(Q) \cup \Sigma_{M_2}^c (Q).$$
 
 b) $\Sigma_M(Q) = \Sigma_{\mathbb{Z} Q / I} (Q)$, where $I$ is the annihilator ideal of $M$.
 \end{lemma}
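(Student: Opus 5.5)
The plan is to prove both parts through a single intrinsic description of $\Sigma_M(Q)$ in terms of the annihilator $I=\mathrm{Ann}_{\mathbb{Z}Q}(M)$. The criterion I aim for is:
$$[\chi]\in\Sigma_M(Q)\iff \text{there is }\lambda\in\mathbb{Z}Q\text{ with }\chi(q)>0\text{ for all }q\in\mathrm{supp}(\lambda)\text{ and }1-\lambda\in I.$$
Part b) then follows at once. The cyclic module $\mathbb{Z}Q/I$ is finitely generated and has annihilator exactly $I$, so the criterion gives the same answer for $M$ and for $\mathbb{Z}Q/I$.

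To prove the criterion, first suppose $M$ is generated over $\mathbb{Z}Q_\chi$ by a finite set $x_1,\dots,x_k$. Since $\chi\neq 0$, I choose $t\in Q$ with $\chi(t)>0$. Each $t^{-1}x_i$ lies in $M$, so
$$t^{-1}x_i=\sum_j a_{ij}x_j,\qquad a_{ij}\in\mathbb{Z}Q_\chi.$$
Thus $(\mathrm{Id}-tA)x=0$ for the matrix $A=(a_{ij})$ and the column $x=(x_1,\dots,x_k)^T$. Multiplying by the adjugate matrix (determinant trick over the commutative ring $\mathbb{Z}Q$) shows that $\det(\mathrm{Id}-tA)$ kills every $x_i$, hence kills $M$. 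Expanding the determinant, $\det(\mathrm{Id}-tA)=1-\lambda$, where every monomial of $\lambda$ contains at least one factor $t$. Therefore every element of $\mathrm{supp}(\lambda)$ has strictly positive $\chi$-value.

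For the converse, suppose $(1-\lambda)M=0$ with $\lambda$ supported on $\chi>0$, and let $x_1,\dots,x_k$ generate $M$ over $\mathbb{Z}Q$. Put $\varepsilon=\min\chi(\mathrm{supp}\,\lambda)>0$. Then $x=\lambda^n x$ for every $n$, and $\mathrm{supp}(\lambda^n)$ has $\chi$-values at least $n\varepsilon$. Fix finitely many generators $q_1^{\pm1},\dots,q_r^{\pm1}$ of $Q$ as a monoid. For each generator $q$ with $\chi(q)<0$, choose $n$ large so that $q\lambda^n\in\mathbb{Z}Q_\chi$; this gives $qx_i=(q\lambda^n)x_i\in\mathbb{Z}Q_\chi\{x_1,\dots,x_k\}$. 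So the $\mathbb{Z}Q_\chi$-span of the $x_i$ is stable under all generators of $Q$, hence equals $M$.

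For part a), I compare annihilators. Since $M_1\subseteq M$ and $M_2$ is a quotient of $M$, we have $\mathrm{Ann}(M)\subseteq\mathrm{Ann}(M_1)\cap\mathrm{Ann}(M_2)$. Also $\mathrm{Ann}(M_1)\mathrm{Ann}(M_2)\subseteq\mathrm{Ann}(M)$, because $\mathrm{Ann}(M_2)M\subseteq M_1$. By the criterion, the first inclusion gives $\Sigma_M\subseteq\Sigma_{M_1}\cap\Sigma_{M_2}$. This step is where the criterion is needed, because $\mathbb{Z}Q_\chi$ need not be Noetherian, so finite generation does not obviously pass to the submodule $M_1$. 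For the reverse inclusion, take $1-\lambda_1\in\mathrm{Ann}(M_1)$ and $1-\lambda_2\in\mathrm{Ann}(M_2)$. Their product is $1-(\lambda_1+\lambda_2-\lambda_1\lambda_2)$, and the support of $\lambda_1+\lambda_2-\lambda_1\lambda_2$ still lies in $\{\chi>0\}$. Hence this product is an element of $\mathrm{Ann}(M)$ of the required form, so $\Sigma_M\supseteq\Sigma_{M_1}\cap\Sigma_{M_2}$. Taking complements gives the stated union formula.

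The main obstacle is the forward direction of the criterion, namely producing the annihilating element $1-\lambda$ from finite generation over $\mathbb{Z}Q_\chi$. I handle it with the determinant trick above, where the essential input is the choice of $t$ with $\chi(t)>0$ and the resulting expansion of $\det(\mathrm{Id}-tA)$. Once the criterion is established, both a) and b) reduce to the elementary annihilator comparisons above.
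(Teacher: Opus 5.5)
Your proof is correct. You establish the annihilator criterion ($[\chi]\in\Sigma_M(Q)$ if and only if $\mathrm{Ann}_{\mathbb{Z}Q}(M)$ contains some $1-\lambda$ with $\chi>0$ on $\mathrm{supp}(\lambda)$) soundly via the determinant trick and the $\lambda^n$ argument; the degenerate case $\lambda=0$ just means $M=0$. Parts a) and b) then follow from it exactly as you describe. The paper gives no proof of its own and simply quotes the result from Bieri--Strebel, and your route appears to be the standard argument behind that citation: derive this annihilator criterion first, then compare annihilators.
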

  
  The classification of the finitely presented  metabelian groups depends on the $\Sigma$-invariant.
  
  \begin{theorem} \cite[Thm. A,ii)]{B-S} \label{classification} 
  Let $ 1 \to M \to G \to Q \to 1$ be a short exact sequence of groups, where $G$ is finitely generated and both $M$ and $Q$ are abelian. Then $G$ is finitely presented if and only if $\Sigma_M^c(Q)$ does not have antipodal points.
  \end{theorem}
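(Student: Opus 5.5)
The plan is to prove the two directions separately. First I would reduce to the case where $Q$ is torsion-free, by passing to a finite index subgroup of $G$. This is harmless: finite presentability passes to and from finite index subgroups, and $\Sigma_M(Q)$ does not change when $Q$ is replaced by a finite index subgroup, since $M$ stays finitely generated over the smaller ring. I would also use throughout that $\Sigma_M(Q)$ is open in $S(Q)$. Openness follows from a finite criterion: finitely many module generators together with finitely many witnessing identities $m_i = \sum \lambda_{ij} q_{ij} m_j$ with $\chi(q_{ij})>0$ certify $[\chi]\in\Sigma_M(Q)$, and each such strict inequality persists under small perturbation of $\chi$. Consequently $\Sigma^c_M(Q)$ is closed.

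\emph{Necessity.} Assume $G$ is finitely presented and, for contradiction, that $[\chi],[-\chi]\in\Sigma^c_M(Q)$.
\begin{enumerate}
\item First treat discrete characters $\chi: Q\to\mathbb{Z}$, composed with $G\to Q$. The Bieri--Strebel lemma says that a finitely presented group with an epimorphism onto $\mathbb{Z}$ is an HNN extension $B*_{t}$ whose base $B\le\ker\chi$ and associated subgroups are finitely generated.
\item If neither associated subgroup equals $B$, Britton's lemma yields a non-abelian free subgroup of $G$. This is impossible because $G$ is metabelian.
\item So the extension is ascending in one of the two directions. Since $B$ is finitely generated, $M\cap B$ is finitely generated over $\mathbb{Z}$ modulo a finitely generated part of $Q$. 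Conjugating by $t^{\pm1}$ then shows that $M$ is finitely generated over $\mathbb{Z}Q_{\chi}$ or over $\mathbb{Z}Q_{-\chi}$, a contradiction.
\end{enumerate}
For non-discrete $\chi$, I expect a limiting argument. I would either show that $\Sigma^c_M(Q)$ is a rational polyhedral set (via Lemma \ref{ttt}(b), reducing to cyclic modules $\mathbb{Z}Q/I$ and valuation-theoretic descriptions), so that an antipodal pair can be moved to rational points. Alternatively, I would run the HNN argument directly with the sub-monoid $Q_\chi$, replacing the HNN decomposition by an action on an $\mathbb{R}$-tree.

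\emph{Sufficiency.} Assume $\Sigma^c_M(Q)\cap-\Sigma^c_M(Q)=\emptyset$. Then for every $[\chi]$ at least one of $\chi,-\chi$ lies in the open set $\Sigma_M(Q)$. By compactness of $S(Q)$, I would extract finitely many characters together with their finite certificates (generators $m_i$ and identities as above), which cover all directions.

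I then build a finite presentation. The generators are lifts of a basis of $Q$ and the module generators $m_i$. The relations are:
\begin{itemize}
\item lifts of the relations of $Q$;
\item the finitely many certificate identities;
\item the commutators $[m_i^{u}, m_j^{v}]$ for $u,v$ ranging over a sufficiently large finite ball in $Q$.
\end{itemize}
Let $\tilde G$ be the group this presentation defines. The key step is to show that in $\tilde G$ the normal closure $\tilde M$ of the $m_i$ is abelian; then $\tilde G\to G$ is an isomorphism modulo finitely many extra module relations, which exist because $\mathbb{Z}Q$ is Noetherian. To prove $[m_i^{u},m_j^{v}]=1$ for arbitrary $u,v$, I would induct on the distance between $u$ and $v$. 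Choose a direction $\chi$ in which $u-v$ is large, and use a certificate for $\chi$ or $-\chi$ to rewrite one of the conjugates as a combination of conjugates closer to the other. This is exactly where the hypothesis is needed: whichever of $\chi,-\chi$ lies in $\Sigma_M(Q)$, one of the two elements can be pushed towards the other.

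I expect this commutation induction to be the main obstacle. The rewriting must strictly decrease a well-chosen distance measure uniformly over all directions, which requires a Lebesgue-number-type choice of the finite ball coming from the compactness argument.
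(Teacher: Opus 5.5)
The paper gives no proof of this statement. It is quoted from Bieri--Strebel and used as a black box, so your proposal has to stand on its own. Several parts are fine:
\begin{itemize}
\item the reduction to torsion-free $Q$;
\item the openness of $\Sigma_M(Q)$ via finite certificates;
\item the discrete case of necessity. In step 3 the precise statement is this. In the ascending case $B$ maps onto $Q_0=\ker(\chi\colon Q\to\mathbb{Z})$, so $B\cap M$ is finitely generated as a $\mathbb{Z}Q_0$-module. The ascending union $M=\bigcup_n (B\cap M)^{t^{\pm n}}$ then gives finite generation over $\mathbb{Z}Q_{\pm\chi}$.
\end{itemize}

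\textbf{The gap: necessity for characters of rank $\geq 2$.} Your HNN argument only reaches characters with discrete image. Nothing you have proved lets you replace an antipodal pair $[\chi],[-\chi]\in\Sigma^c_M(Q)$ by a discrete one. The set $\Sigma^c_M(Q)\cap-\Sigma^c_M(Q)$ is closed and symmetric, but a nonempty closed subset of $S(Q)$ need not contain any rational point. So openness of $\Sigma_M(Q)$ plus density of discrete characters gives nothing here.

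What is needed is genuine structural information about $\Sigma^c_M(Q)$. One source is the Bieri--Groves theorem that it is a finite union of rationally defined spherical polyhedra. It would even suffice to know the local consequence: near any $[\chi]\in\Sigma^c_M(Q)$, the set contains a neighbourhood of $[\chi]$ inside the rational subspace $\{\psi : \psi(\ker\chi)=0\}$. Granted this, take such neighbourhoods $U$ of $\chi$ and $U'$ of $-\chi$. A rational $\psi\in U\cap(-U')$ gives a discrete antipodal pair, and your discrete argument finishes.

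Saying you would obtain this ``via valuation-theoretic descriptions'' names that theorem rather than proving it, and it is the whole content of the non-discrete case. Your second option, an $\mathbb{R}$-tree substitute for the Bieri--Strebel HNN lemma, is not an available tool. There is no such decomposition result for non-discrete characters that you can simply invoke, and establishing one would be a substantial problem in its own right.

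\textbf{Sufficiency.} Your strategy is the right kind of argument, and the uniform decrease does work. It uses compactness, finitely many certificates, a finite ball of commutator relations, and induction with a Lebesgue-number choice. Suppose every support element $s$ of the certificate used in the direction of $q$ satisfies $\langle q/|q|,s\rangle\geq\varepsilon$ (resp.\ $\leq-\varepsilon$). Then $|q-s|<|q|$ (resp.\ $|q+s|<|q|$) as soon as $|q|>\max|s|^2/2\varepsilon$.

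However, the induction as stated does not close. In $\tilde G$ the lifts of the basis of $Q$ commute only modulo $\tilde M$. So $m_i^{w}$ depends on the word $w$, not only on its image in $Q$. The words $ww_s^{-1}$ or $w_sw$ produced in the inductive step are not the representatives for which you imposed the ball relations, so even the base case fails for them.

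You first have to arrange that $\tilde M$ is generated by the conjugates $m_i^{t}$ with $t$ in a normal-form transversal. One way is to add the commutators $[x_k,x_l]\in M$ to the generators. You then have to control the correction terms in $\tilde M$ that appear when products of transversal words are renormalised. This bookkeeping is a substantial part of a complete proof and is not addressed.
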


\subsection{Homology and cohomology} If not stated otherwise a group $G$ is a discrete (abstract) group. We denote by $ \widehat{G}$ its profinite completion, hence $\widehat{G} \simeq   \varprojlim_U G / U$ where the inverse limit is over all normal subgroups $U$ of  finite index in $G$, and by $ \widehat{G}_p$ its pro-$p$ completion, where $p$ is a prime integer,  hence $\widehat{G}_p \simeq   \varprojlim_V G / U$ where the inverse limit is over all normal subgroups $V$ of  $p$-power  index in $G$.
For example the profinite completion of $\mathbb{Z}$ is $ \widehat{\mathbb{Z}}$ and the pro-$p$ completion is the ring of $p$-adic integers  $\widehat{\mathbb{Z}}_p$.

 We use the standard notation $H_n(G, - )$ and $H^n(G, - )$ for homology and cohomology, for basic properties the reader is refered to \cite{Rotman}. We write $H_n(\widehat{G}, - )$, $H_n(\widehat{G}_p, - )$, $H^n(\widehat{G}, -)$, $H^n(\widehat{G}_p, -)$ for profinite homology and cohomology,  for basic properties the reader is refered to \cite{R-Z}, \cite{Serre}.  For $
A =  \varprojlim_i A_i$, each $A_i$ is a finite $G/ U_i$-module, hence a  finite $G/ U$-module for all $U \leq U_i$  we have 
  $$H_n(\widehat{G}, A) \simeq  \varprojlim_{U,i}H_n(G/ U, A_i)$$ Dually for $B = \varinjlim_i B_i$, each $B_i$ is a finite $G/ U_i$-module, hence a finite $G/ U$-module for all $U \leq U_i$, we have $$H^n(\widehat{G}, B) \simeq  \varinjlim_{U,i}H^n(G/ U, B_i)$$

  \subsection{Cohomologically good groups}     Following Serre  \cite{Serre} and Lorensen \cite{L} a discrete group $G \in \mathcal{A}_n$ if for the profinite completion $\widehat{G}$ of $G$ the canonical map $G \to \widehat{G}$ induces an isomorphism $$H^i(\widehat{G}, A) \to H^i(G, A)$$ for every finite discrete $\widehat{G}$-module $A$ and every $ i \leq n$.  A group is called (cohomologically) good if $G \in \mathcal{A}_n$ for all $n$.
    
       The importance of the class $\mathcal{A}_2$ lies in the following result.
  
  \begin{prop}  \cite{L}, \cite{Serre} \label{SerSer}  If
 $G_0 \in \mathcal{A}_2$ and $1 \to N_0 \to E_0 \to G_0 \to 1$ is a short exact sequence of groups with $N_0$ finitely generated, both $N_0$ and $G_0$ residually finite, then $E_0$ is residually finite.
    \end{prop}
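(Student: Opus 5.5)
We follow Serre's argument, using the cohomological characterisation of $\mathcal{A}_2$. Since $E_0$ is residually finite exactly when every nontrivial element survives in some finite quotient, we split into two cases. Let $e\in E_0$ with $e\neq 1$. If $e\notin N_0$, its image in $G_0$ is nontrivial, and residual finiteness of $G_0$ gives a finite quotient of $G_0$, hence of $E_0$, in which $e$ survives. So the only case that needs work is $e\in N_0$, $e \neq 1$. By residual finiteness of $N_0$ there is a normal subgroup $M\le N_0$ of finite index with $e\notin M$. Because $N_0$ is finitely generated, it has only finitely many subgroups of a given finite index. Hence the intersection $M_1$ of all subgroups of $N_0$ of index $[N_0:M]$ is characteristic in $N_0$, of finite index, still misses $e$, and is normal in $E_0$.

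Passing to $E_1=E_0/M_1$, we obtain an extension $1\to A\to E_1\to G_0\to 1$ with $A=N_0/M_1$ finite, and it suffices to show that $E_1$ is residually finite. The conjugation action of $E_1$ on $A$ gives a homomorphism $G_0\to \mathrm{Out}(A)$, and $E_1\to \mathrm{Aut}(A)$ has kernel $C$ of finite index. Replacing $E_1$ by the finite index subgroup $C\cdot A$ reduces us to a central extension. More precisely, set $E_2=C_{E_1}(A)$, which has finite index. Then $A\cap E_2=Z(A)$ is central in $E_2$, and the image $G_2$ of $E_2$ in $G_0$ has finite index. Finite index subgroups of groups in $\mathcal{A}_2$ are again in $\mathcal{A}_2$ by Shapiro's lemma, and residual finiteness passes to finite index overgroups. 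So we may assume the extension is central with finite kernel $A$, classified by a class $\xi\in H^2(G_0,A)$ with trivial action.

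The key step, which I expect to be the main obstacle, uses surjectivity of $H^2(\widehat{G_0},A)\to H^2(G_0,A)$. Choose $\eta$ in the profinite cohomology mapping to $\xi$. Then $\eta$ is inflated from some finite quotient $G_0/U$: there is a class $\eta_U\in H^2(G_0/U,A)$ whose inflation to $G_0$ is $\xi$. Let $1\to A\to F\to G_0/U\to 1$ be the finite extension corresponding to $\eta_U$. Since $\xi$ is the inflation of $\eta_U$, the extension $E$ is isomorphic to the pullback $F\times_{G_0/U}G_0$. This pullback embeds in $F\times G_0$, which is residually finite. Hence $E$ is residually finite, and in particular the element $e\in A$, $e\neq 1$, survives in the finite quotient $F$. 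Only surjectivity in degree $2$, together with the injectivity in degree $1$ that is implicit in the definition of $\mathcal{A}_2$, is needed. One should also check, as above, that restricting to $E_2$ does not lose $e$. If $e\notin Z(A)$, then $e$ already survives in the finite quotient $E_1/E_2$-type action on $A$, namely in $\mathrm{Aut}(A)$ via conjugation, or simply because $e\notin E_2$ and $E_2$ has finite index with finite-index normal core.
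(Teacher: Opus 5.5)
Your proof is correct. The paper gives no proof of this proposition (it only cites Serre and Lorensen), and your route is the standard argument behind those references: pass to a characteristic finite-index $M_1\le N_0$ avoiding $e$, then to $E_2=C_{E_1}(A)$ to get a central extension of the finite-index subgroup $G_2\in\mathcal{A}_2$ by the finite trivial module $Z(A)$, and finally use surjectivity of $H^2(\widehat{G_2},Z(A))\to H^2(G_2,Z(A))$ to realise that extension as a pullback inside $F\times G_2$ with $F$ finite.

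Two cosmetic points. Your first suggestion $C\cdot A$ need not give a central extension when $A$ is non-abelian; your corrected choice $E_2$ is the right one. Also, injectivity in degree $1$ is never actually used, since surjectivity in degree $2$ for $G_2$ follows via Shapiro's lemma from surjectivity for $G_0$ on the finite coinduced module.
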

    
     The homological version of the class $\mathcal{A}_n$ is the class of groups $\mathcal{A}_n^{hom}$ defined in the introduction.
    The fact that $\mathcal{A}_n = \mathcal{A}_n^{hom}$ is proved in \cite{BR-K}, generalising earlier result of Hillman and Kochloukova in \cite{H-K}. We do not need this result in the current paper  but we will consider applications of it mainly in  \cite{BR-K} and partially in  \cite{AT-K}.

      \iffalse{\begin{theorem} \cite{BR-K}  \label{surprise} Let $G$ be a discrete group,  $\mathcal{T}$ be a set of normal subgroups of finite index in $G$ such that $\mathcal{T}$ defines the profinite topology of $G$ and for every prime integer $p$ consider $\mathbb{F}_p$ as the trivial $G$-module. Then the following are equivalent:

a) $G \in {\mathcal{A}}_n$;

b) $G \in {\mathcal{A}}_n^{hom}$;

c)  the map $H^i ( \widehat{U}, \mathbb{F}_p) \to H^i(U, \mathbb{F}_p)$ is an isomorphism for all $ i \leq n$, $U \in \mathcal{T}$ and for every prime integer $p$;

d)  the map $H_i(U, \mathbb{F}_p) \to H_i ( \widehat{U}, \mathbb{F}_p)$ is an isomorphism for all $ i \leq n$,  $U \in \mathcal{T}$ and  for every prime integer $p$.
\end{theorem}
}\fi

\iffalse{A discrete group $G$ is {\it (cohomologically) $p$-good in dimension $\leq n$} if for every finite pro-$p$ $\mathbb{Z}_p[[\widehat{G}_p]]$-module $A$ the canonical map $G \to \widehat{G}_p$ induces na isomorphism 
$$H^{i}(\widehat{G}_p, A) \to H^i(G, A) \hbox{ for every } i \leq n.$$
A discrete group $G$ is {\it homologically $p$-good in dimension $\leq n$} if 
for every finite pro-$p$ $\mathbb{Z}_p[[\widehat{G}_p]]$-module $A$ the canonical map $G \to \widehat{G}_p$ induces an isomorphism 
$$ H_i(G, A) \to H_{i}(\widehat{G}_p, A) \hbox{ for every } i \leq n.$$}\fi

  \section{Proof of Theorem A} \label{sectA}
 
 This section contains various results that together prove Theorem A.
 
  In the following lemma we prove that a Hopf type formula is valid for profinite groups.
 For subsetsets $A,B$ of a profinite group  $F_0$ write $\overline{[A, B]}$ for the profinite subgroup of $F_0$ generated (topologically)  by $\{ [f_1, f_2] \ | \ f_1 \in A, f_2 \in B \}$. 
 \begin{lemma}
 Let $C$ be a profinite group and $F_0$ be a profinite group with a profinite  normal  subgroup $R_0$ such that $C \simeq F_0 / R_0$  and $H_2(F_0, \widehat{\mathbb{Z}}) = 0$. Then there is an isomorphism
 $$H_2(C, \widehat{\mathbb{Z}}) \simeq R_0 \cap \overline{[F_0, F_0]}/ \overline{[R_0, F_0]}. $$
 \end{lemma}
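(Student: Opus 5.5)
We imitate the classical proof of Hopf's formula via the five-term exact sequence, using the profinite Lyndon--Hochschild--Serre spectral sequence with coefficients in $\widehat{\mathbb{Z}}$ (see \cite{R-Z}). The extension to consider is the profinite one
$$1 \to R_0 \to F_0 \to C \to 1 .$$

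\textbf{Step 1: the five-term sequence.} For a closed normal subgroup $R_0$ of the profinite group $F_0$, the profinite LHS spectral sequence $H_p(C, H_q(R_0,\widehat{\mathbb{Z}})) \Rightarrow H_{p+q}(F_0,\widehat{\mathbb{Z}})$ gives the exact sequence
$$H_2(F_0,\widehat{\mathbb{Z}}) \to H_2(C,\widehat{\mathbb{Z}}) \to H_0(C, H_1(R_0,\widehat{\mathbb{Z}})) \to H_1(F_0,\widehat{\mathbb{Z}}) \to H_1(C,\widehat{\mathbb{Z}}) \to 0.$$
Since $H_2(F_0,\widehat{\mathbb{Z}})=0$, the map $H_2(C,\widehat{\mathbb{Z}})$ injects into $H_0(C,H_1(R_0,\widehat{\mathbb{Z}}))$, with image the kernel of the next map. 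Alternatively, one can obtain the whole sequence by taking inverse limits of the discrete five-term sequences for the finite quotients $F_0/U \to C/(R_0U/U)$; exactness survives the passage to the limit because inverse limits of finite groups are exact.

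\textbf{Step 2: identifying the terms.} For a profinite group $K$ we have $H_1(K,\widehat{\mathbb{Z}}) \simeq K/\overline{[K,K]}$. Hence:
\begin{itemize}
\item $H_1(R_0,\widehat{\mathbb{Z}}) = R_0/\overline{[R_0,R_0]}$, with $C$ acting by conjugation.
\item The coinvariants are $H_0(C, R_0/\overline{[R_0,R_0]}) = R_0/\overline{[R_0,F_0]}$. This uses that the closed subgroup generated by $\overline{[R_0,R_0]}$ and the elements $r^f r^{-1}$ is exactly $\overline{[R_0,F_0]}$.
\item $H_1(F_0,\widehat{\mathbb{Z}}) = F_0/\overline{[F_0,F_0]}$.
\item The map $R_0/\overline{[R_0,F_0]} \to F_0/\overline{[F_0,F_0]}$ is induced by inclusion, so its kernel is $(R_0\cap\overline{[F_0,F_0]})/\overline{[R_0,F_0]}$.
\end{itemize}
Combining this with Step 1 gives the claimed isomorphism.

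\textbf{Main obstacle.} The delicate point is to justify that the edge maps and the identifications of $H_1$ and coinvariants hold in the profinite setting with $\widehat{\mathbb{Z}}$ coefficients. In particular, the coinvariants must be a Hausdorff quotient by a \emph{closed} subgroup, which is why closures appear.

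I would handle this through finite quotients. For an open normal $U \le F_0$, set $F_U = F_0/U$ and $R_U = R_0U/U$, so that $C_U = F_U/R_U$ runs over a cofinal system of finite quotients of $C$. Then:
\begin{itemize}
\item The discrete five-term sequences with coefficients in $\mathbb{Z}/n$ are exact.
\item Their inverse limit over $U$ and $n$ remains exact.
\item The corresponding limits of the terms are the closed-subgroup quotients above, since $\varprojlim$ of $R_U/[R_U,F_U]$ equals $R_0/\overline{[R_0,F_0]}$ by compactness, and similarly for the other terms.
\end{itemize}
Compactness is also what ensures that images of closed sets are closed, so that $\overline{[R_0,F_0]}$ is the intersection of its images.

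Finally, the hypothesis $H_2(F_0,\widehat{\mathbb{Z}})=0$ enters only through injectivity at the left end of the sequence in Step 1.
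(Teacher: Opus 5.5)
Your proposal is correct and follows essentially the paper's route. The paper also applies the five-term sequence of the profinite LHS spectral sequence for $1 \to R_0 \to F_0 \to C \to 1$, kills $H_2(F_0,\widehat{\mathbb{Z}})$, and identifies $H_1(F_0,\widehat{\mathbb{Z}})$ with $F_0/\overline{[F_0,F_0]}$ to read off the kernel. Your finite-quotient argument for identifying the coinvariant term with $R_0/\overline{[R_0,F_0]}$ is extra care that the paper leaves implicit.
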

 
 \begin{proof}
 Consider the short exact sequence of profinite groups $$1 \to R_0 \to F_0 \to C \to 1$$
 and the associated Lyndon-Hoschild-Serre spectral sequence $$E_{i,j}^2 = H_i(C, H_j(R_0, \widehat{\mathbb{Z}}))$$ that converges to the profinite group $H_{i+ j}(F_0, \widehat{\mathbb{Z}})$. This spectral sequence gives rise a 5-term exact sequence in homology
 $$
  0 =  H_2(F_0, \widehat{\mathbb{Z}}) \to H_2(C, \widehat{\mathbb{Z}}) \to R_0/\overline{[R_0, F]} \to H_1(F_0, \widehat{\mathbb{Z}}) \to  H_1(C, \widehat{\mathbb{Z}}) \to 1$$
 Since $H_1(F_0, \widehat{\mathbb{Z}})  \simeq F_0/ \overline{[F_0, F_0]}$ we obtain that
 $$ H_2(C, \widehat{\mathbb{Z}}) \simeq Ker( R_0/\overline{[R_0, F]} \to H_1(F_0, \widehat{\mathbb{Z}})) \simeq$$ $$
 Ker ( R_0/ \overline{[R_0, F]} \to  F_0/ \overline{[F_0, F_0]}) = R_0 \cap \overline{[F_0, F_0]}/ \overline{[R_0, F_0]}.$$
 \end{proof}
 
 Let $G$ be a discrete group.
 Recall that $\widehat{G}$ is the profinite completion of $G$, hence $\widehat{G} \simeq   \varprojlim_U G / U$, where the inverse limit is over all normal subgroups $U$ of  finite index in $G$.
 We define the profinite non-abelian exterior product   $\widehat{G} \widehat{\wedge} \widehat{G}$ as the inverse limit   $ \varprojlim_U (G / U) \wedge (G/ U) $,  where the inverse limit is over all normal subgroups $U$ of  finite index in $G$.
 
 \begin{lemma} \label{res-finite}Suppose that $G$ is a discrete group such that $G$ is residually finite.

  a) If the map $H_2(G, \Z) \to \varprojlim_U H_2(G/U, \Z)$, induced by the canonical projections $G \to G/ U$ (for $ \widehat{G} \simeq \varprojlim G/U$, $G/ U$ finite groups) is injective then $G \wedge G$ is residually finite.
 
 b) If the canonical map $H_2(G, \mathbb{Z}) \to H_2(\widehat{G}, \widehat{\mathbb{Z}})$ is injective then $G \wedge G$ is residually finite.
 \end{lemma}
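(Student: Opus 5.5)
We use the central extension from the Brown--Loday description recalled above:
$$1 \to H_2(G,\mathbb{Z}) \to G \wedge G \xrightarrow{\ \kappa\ } G' \to 1,\qquad \kappa(g_1\wedge g_2)=[g_1,g_2],$$
together with its analogues $1 \to H_2(G/U,\mathbb{Z}) \to (G/U)\wedge(G/U) \to (G/U)' \to 1$ for every normal subgroup $U$ of finite index. For finite $G/U$ the group $(G/U)\wedge(G/U)$ is finite (Ellis, cited in the introduction: it is a quotient of the finite group $(G/U)\otimes(G/U)$). The projection $\pi_U\colon G\to G/U$ induces by functoriality a homomorphism $\theta_U\colon G\wedge G \to (G/U)\wedge(G/U)$, $g_1\wedge g_2\mapsto g_1U\wedge g_2U$. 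It commutes with the commutator maps, and on the kernels it restricts to the map $H_2(G,\mathbb{Z})\to H_2(G/U,\mathbb{Z})$ induced by $\pi_U$. That this restriction is the homology map can be checked through the Hopf description of Theorem \ref{wedge}: choose a free presentation $G=F/R$ and take $F/RS$ as a presentation of $G/U$, where $S\le F$ is the preimage of $U$, so $RS=S$. The map $F'/[F,R]\to F'/[F,S]$ then restricts on $(R\cap F')/[F,R]$ to the Hopf-formula map on $H_2$.

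\textbf{Part a).} Let $1\neq x\in G\wedge G$; we look for $U$ with $\theta_U(x)\neq 1$.

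\emph{Case 1: $\kappa(x)\neq 1$.} Since $G$ is residually finite, there is a finite index normal $U$ with $\kappa(x)\notin U$. Then the image of $\kappa(x)$ in $(G/U)'$ is nontrivial, and this image equals $\kappa_U(\theta_U(x))$, so $\theta_U(x)\neq 1$.

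\emph{Case 2: $\kappa(x)=1$.} Then $x\in H_2(G,\mathbb{Z})$. Injectivity into $\varprojlim_U H_2(G/U,\mathbb{Z})$ means exactly that some coordinate of $x$ is nonzero, that is, $\theta_U(x)\neq 1$ in $H_2(G/U,\mathbb{Z})\subseteq (G/U)\wedge(G/U)$.

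In both cases $\ker\theta_U$ is a normal subgroup of finite index in $G\wedge G$ missing $x$, so $G\wedge G$ is residually finite.

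\textbf{Part b).} This reduces to a). We identify $H_2(\widehat{G},\widehat{\mathbb{Z}})\simeq\varprojlim_{U,n} H_2(G/U,\mathbb{Z}/n)$ using the formula in the homology subsection. The map $H_2(G,\mathbb{Z})\to H_2(\widehat{G},\widehat{\mathbb{Z}})$ factors as
$$H_2(G,\mathbb{Z})\to\varprojlim_U H_2(G/U,\mathbb{Z})\to\varprojlim_{U,n}H_2(G/U,\mathbb{Z}/n),$$
where the second map is induced by the coefficient maps $\mathbb{Z}\to\mathbb{Z}/n$. Hence injectivity of the composite forces injectivity of the first map, and a) applies.

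The main point requiring care is this factorization, i.e. the naturality and compatibility of the identification of profinite homology with the inverse limit over $U$ and $n$. Since $H_2(G/U,\mathbb{Z})$ is finite, the second map is even injective, because $H_2(G/U,\mathbb{Z})\to H_2(G/U,\mathbb{Z}/n)$ is injective for $n$ a multiple of its exponent by the universal coefficient theorem. This is not needed, however. The other point is verifying that $\theta_U$ restricts to the homology map, which is handled by the Hopf-formula argument above.
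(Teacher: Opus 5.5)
Your proof is correct and follows essentially the same route as the paper's first proof. The paper maps $G\wedge G$ into the profinite group $\varprojlim_U (G/U)\wedge(G/U)$ and chases the diagram of central extensions, using that $\alpha$ and $\gamma$ are injective; your two-case separation argument is that same chase done one coordinate $U$ at a time. Part b) is reduced to a) in the same way, by factoring through $\mu\colon \varprojlim_U H_2(G/U,\mathbb{Z})\to H_2(\widehat{G},\widehat{\mathbb{Z}})$. Your Hopf-formula check that $\theta_U$ restricts to the map on $H_2$, and your note that $(G/U)\wedge(G/U)$ is finite, make explicit two points the paper uses without comment.
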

 
 \begin{proof}
  Consider the short exact sequence
  $$   1 \to H_2(G/U, \mathbb{Z}) \to (G / U) \wedge (G/ U) \to [G/U, G/U] \to 1 $$
  Then take
   inverse limit over all $G/ U$ finite and 
   note that  though the inverse limit is only a left exact (covariant) functor since  all groups  $H_2(G/U, \mathbb{Z})$ are  finite the inverse system $\{ H_2(G/U, \Z) \}_U$ satisfies the  Mittag-Leffler condition, hence  $\varprojlim_U ^1 H_2(G/U, \mathbb{Z}) = 0$ and   we  obtain an exact sequence
  $$ 1 \to \varprojlim_U  H_2(G/U, \mathbb{Z}) \to \varprojlim_U (G / U) \wedge (G/ U) \to \varprojlim_U [G/U, G/U] \to 1 $$
  
 Note that  $\widehat{G} \widehat{\wedge} \widehat{G} =  \varprojlim_U (G / U) \wedge (G/ U) $ is a profinite group and $  \varprojlim_U [G/U, G/U]$ is the commutator subgroup $\overline{[\widehat{G}, \widehat{G}]}$ of $ \widehat{G}$. \iffalse{The above sequence is exact on the right, since the maps in the inverse system $\{ [G/U, G/U] \}_U$ are surjective.}\fi
  
  The canonical map $G \to \widehat{G}$ induces a map $\beta : G \wedge G \to \widehat{G} \widehat{\wedge} \widehat{G}$  that induces  a diagram with exact rows
 
\[ 
\begin{tikzcd}
{ 1}\arrow{r}{}&  H_2(G, \mathbb{Z})  \arrow{r}
\arrow{d}{\alpha}
 &   G \wedge G \arrow{r}{} \arrow{d}{\beta}& \ [G,G] \arrow{r}{}\arrow{d}{\gamma}& { 1}\\%
{ 1} \arrow{r}{}& \varprojlim_U H_2({ G/U}, \mathbb{Z}) \arrow{r}{}  &  \widehat{G}  \ \widehat{\wedge} \  \widehat{ G} \arrow{r}{} & \ \overline{[\widehat{G},\widehat{G}]}  \arrow{r}{} & { 1}\\%
\end{tikzcd}
\]

   The maps $\Z \to \Z/ m \Z$ induce  a map $ \varprojlim_U H_2(G/U, \mathbb{Z}) \to  \varprojlim_{U,m} H_2(G/U, \mathbb{Z}/m \Z)$
  that combined with the isomorphism $\varprojlim_{U,m} H_2(G/U, \mathbb{Z}/m \Z) \simeq H_2(\widehat{G}, \widehat{\mathbb{Z}})$  give a map
  $$\mu : \varprojlim_U H_2(G/U, \mathbb{Z})  \to H_2(\widehat{G}, \widehat{\mathbb{Z}})$$
  
 In a) is given that $\alpha$ is injective and in b) is given that the map  $\mu \circ \alpha : H_2(G, \mathbb{Z}) \to  H_2(\widehat{G}, \widehat{\mathbb{Z}})$ is injective. Then in both cases  $\alpha$ is injective.
  
  Since $G$ is residually finite the canonical map $G \to \widehat{G}$ is injective, hence $\gamma$ is injetive. This implies, via the commutative diagram, that $\beta$ is injective but since $ \widehat{G} \widehat{\wedge} \widehat{G}$ is profinite, $ \widehat{G}   \widehat{\wedge}  \widehat{G}$  is residually finite, hence  $G \wedge G$ is residually finite.
 
 \medskip
 2) We give a second proof of b) in the case when $G$ is finitely generated. Actually an adaptation for not finitely generated group could be done but as extra care is needed to deal with non-finitely generated  free profinite groups and we already have one valid proof we restrict to the finitely generated case. Let $F$ be a finitely generated  free group such that $G \simeq F / R$. Then the profinite completion $ \widehat{F}$ is a finitely generated free profinite group and we consider the canonical map $F \to \widehat{F}$ as the inclusion map.  For a subset $S$ in $ \widehat{F}$ denote by $ \overline{S}$ the closure of $S$. For subsets $S_1, S_2$ of $ \widehat{F}$ denote by $\overline{[S_1, S_2]}$ the closure of the abstract subgroup generated by $\{ s_1^{-1} s_2^{-1} s_1 s_2 \ | \ s_1 \in S_1, s_s \in S_2 \}$ . Consider the commutative diagram with exact rows
 
 \[ 
\begin{tikzcd}
{ 1}\arrow{r}{}&  \frac{[F,F] \cap R}{[R,F]}  \arrow{r}
\arrow{d}{\alpha}
 &  \frac{ [F,F]}{[R,F]} \arrow{r}{} \arrow{d}{\beta}& \ \frac{[F,F]}{[F,F] \cap R} \arrow{r}{}\arrow{d}{\gamma}& { 1}\\%
{ 1}\arrow{r}{}& \ \frac{\overline{[\widehat{F},\widehat{F}]} \cap \overline{R}}{\overline{[\overline{R}, \widehat{F}]}} \arrow{r}{}  & \frac{ \overline{[\widehat{F},\widehat{F}]}}{\overline{[\overline{R},\widehat{F}]}}   \arrow{r}{} & \ \frac{\overline{[\widehat{F},\widehat{F}]}}{\overline{[\widehat{F},\widehat{F}]} \cap \overline{R}} \arrow{r}{}& { 1}\\%
\end{tikzcd}
\]
where all the maps are induced by the canonical map $F \to \widehat{F}$.

 Note that by the Hopf formula for abstract and profinite groups we have $$H_2(G, \mathbb{Z}) \simeq  \frac{[F,F] \cap R}{[R,F]} \hbox{ and }H_2(\widehat{G}, \widehat{ \mathbb{Z}}) \simeq \frac{\overline{[\widehat{F},\widehat{F}]} \cap \overline{R}}{\overline{[\overline{R}, \widehat{F}]}}$$ and $\alpha$ by assumption is injective.
 
 Note that $$ \frac{[F,F]}{[F,F] \cap R}  \simeq  \frac{[F,F] R}{R} \simeq [G,G]$$ is the commutator subgroup of $G.$  Similarly $$\frac{\overline{[\widehat{F},\widehat{F}]}}{\overline{[\widehat{F},\widehat{F}]} \cap \overline{R}} \simeq \frac{\overline{[\widehat{F},\widehat{F}]}\overline{R}}{\overline{R}} \simeq \overline{[\widehat{G}, \widehat{G}]}$$ is the commutator subgroup of $\widehat{G}.$
 Thus $\gamma$ can be identified with the map $\gamma_0 : [G,G] \to \overline{[\widehat{G}, \widehat{G}]}$ induced 
 by  the canonical map $\gamma_1 : G \to \widehat{G}$. Since $G$ is residually finite, $\gamma_1$ and hence $\gamma_0$ are  injective. Since $\alpha$ and $\gamma$ are injective we conclude that $\beta$ is injective.
 
 Finally by Theorem \ref{wedge} $$G \wedge G \simeq  \frac{ [F,F]}{[R,F]}, $$ hence via $\gamma$ we embed $G \wedge G$ in the profinite group $\frac{ \overline{[\widehat{F},\widehat{F}]}}{\overline{[\widehat{R},\widehat{F}]}} $ and every profinite group is residually finite.
 
  \end{proof}
  
  \begin{lemma}   Let $p$ be a prime integer.
  Suppose that $G$ is a discrete group such that $G$ is residually $p$-finite. Then
  
 a) If the map $H_2(G, \Z) \to \varprojlim_U H_2(G/U, \Z)$, induced by the canonical projections $G \to G/ U$ (for $ \widehat{G}_p \simeq \varprojlim G/U$, $G/ U$ $p$-finite groups) is injective then $G \wedge G$ is residually $p$-finite.
 
 b) If   the canonical map $H_2(G, \mathbb{Z}) \to H_2(\widehat{G}_p, {\mathbb{Z}}_p)$ is injective then $G \wedge G$ is residually $p$-finite.
 \end{lemma}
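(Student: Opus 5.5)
The plan is to copy the first proof of Lemma \ref{res-finite}, with the inverse system of all finite quotients replaced by the system of finite $p$-quotients. Let $\mathcal{V}$ be the set of normal subgroups $V$ of $G$ of $p$-power index, so that $\widehat{G}_p \simeq \varprojlim_{V \in \mathcal{V}} G/V$.

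\textbf{Step 1: the pro-$p$ limit sequence.} For each $V \in \mathcal{V}$ the quotient $G/V$ is a finite $p$-group. Hence $(G/V)\wedge(G/V)$ is a finite $p$-group, because it is an extension of $H_2(G/V,\Z)$ by $[G/V,G/V]$, and both of these are finite $p$-groups. Start from the exact sequences
$$1 \to H_2(G/V,\Z) \to (G/V)\wedge(G/V) \to [G/V,G/V] \to 1$$
and pass to the inverse limit over $\mathcal{V}$. All terms are finite, so the Mittag-Leffler condition holds and $\varprojlim^1$ vanishes. This gives an exact sequence
$$1 \to \varprojlim_V H_2(G/V,\Z) \to \varprojlim_V (G/V)\wedge(G/V) \to \overline{[\widehat{G}_p,\widehat{G}_p]} \to 1.$$
The middle term is an inverse limit of finite $p$-groups, so it is a pro-$p$ group and in particular residually $p$-finite.

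\textbf{Step 2: injectivity of $\beta$.} The maps $G \to G/V$ give a commutative diagram with exact rows, exactly as in Lemma \ref{res-finite}. The vertical maps are
$$\alpha : H_2(G,\Z) \to \varprojlim_V H_2(G/V,\Z), \qquad \beta : G \wedge G \to \varprojlim_V (G/V)\wedge(G/V), \qquad \gamma : [G,G] \to \overline{[\widehat{G}_p,\widehat{G}_p]}.$$
The map $\gamma$ is injective because $G$ is residually $p$-finite, so $G \to \widehat{G}_p$ is injective. In case a) $\alpha$ is injective by hypothesis. A diagram chase (the four-lemma on the left half) then shows that $\beta$ is injective. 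So $G \wedge G$ embeds into a pro-$p$ group and is therefore residually $p$-finite.

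\textbf{Step 3: reducing b) to a).} For b) it suffices to factor the given map $H_2(G,\Z) \to H_2(\widehat{G}_p,\mathbb{Z}_p)$ through $\alpha$; injectivity of the composite then forces $\alpha$ to be injective. I would construct a map
$$\mu : \varprojlim_V H_2(G/V,\Z) \to H_2(\widehat{G}_p,\mathbb{Z}_p).$$
It is induced by the reductions $\Z \to \Z/p^k\Z$ together with the standard identification
$$H_2(\widehat{G}_p,\mathbb{Z}_p) \simeq \varprojlim_{V,k} H_2(G/V,\Z/p^k\Z),$$
which holds because $\mathbb{Z}_p = \varprojlim_k \Z/p^k\Z$ with each $\Z/p^k\Z$ a finite $G/V$-module. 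Naturality of all the maps involved gives $\mu\circ\alpha$ equal to the canonical map, and b) follows from a).

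\textbf{Main obstacle.} The only point needing care is the compatibility check in Step 3: that the canonical map factors as $\mu\circ\alpha$, and that the pro-$p$ homology description used matches the one in the preliminaries. This is the same argument as in the profinite case. The rest transfers verbatim from the proof of Lemma \ref{res-finite}.
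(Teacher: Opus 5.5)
Your proposal is correct and matches the paper's proof. The paper reruns the first proof of Lemma \ref{res-finite} with $U$ ranging over normal subgroups of $p$-power index, replaces $\widehat{G}$ by $\widehat{G}_p$ and $\widehat{\mathbb{Z}}$ by $\mathbb{Z}_p$, and reduces b) to a) through the map $\mu$, exactly as you do; you also make explicit the point the paper leaves tacit, that each $(G/V)\wedge(G/V)$ is a finite $p$-group, so the limit is pro-$p$.
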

 
 \begin{proof} 
 The first proof of the previous result applies with $U$ running among all normal subgroups of $G$ such that $G/U$ is $p$-finite. Furthermore we have to substitute in the proof the profinite completion $\widehat{G}$ with the pro-$p$ completion $\widehat{G}_p$ and $\widehat{\mathbb{Z}}$ with $\mathbb{Z}_p$.
 \end{proof}
 
 \begin{lemma}    Let $p$ be a prime integer.
 
 a) If $G/ G'$ is finitely generated and either has no 2-torsion or  $G'$ has a complement in $G$ then $G \otimes G$ is residually finite if and only if $G \wedge G$ is residually finite.
 
 b)  If $G/ G'$ is finitely generated and either has no 2-torsion or  $G'$ has a complement in $G$ then $G \otimes G$ is residually $p$-finite if and only if $G \wedge G$ is residually $p$-finite and the torsion subgroup of $\Delta(G)$ is $p$-finite. In particular,   if $G/ G'$ is finitely generated and either  $G/ G'$ has no 2-torsion or  $G'$ has a complement in $G$, then   $G \otimes G$ is residually $p$-finite for every prime $p$ if and only if $\Delta(G)$ is torsion-free and $G \wedge G$ is residually $p$-finite  for every prime $p$.
 \end{lemma}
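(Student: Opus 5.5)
Under either hypothesis Theorem \ref{2tor} gives a direct product decomposition $G \otimes G \simeq \Delta(G) \times (G \wedge G)$. By the Remark after it, $\Delta(G) \simeq \Gamma(G/G')$, and $\Delta(G)$ is finitely generated abelian because $G/G'$ is finitely generated. So the whole lemma reduces to two standard facts about direct products. A direct product $A \times B$ is residually finite (resp. residually $p$-finite) if and only if both factors are. The ``only if'' direction holds because these properties pass to subgroups. For the ``if'' direction, given $(a,b) \neq 1$, say $a \neq 1$, choose $N \trianglelefteq A$ of finite (resp. $p$-power) index with $a \notin N$; then $N \times B$ is a normal subgroup of the same kind in $A \times B$ missing $(a,b)$.

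For a), I would note that $\Delta(G)$, being finitely generated abelian, is always residually finite, since $\mathbb{Z}^n \oplus F$ with $F$ finite is. Hence $G \otimes G$ is residually finite exactly when $G \wedge G$ is.

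For b), I would prove that a finitely generated abelian group $A \simeq \mathbb{Z}^n \oplus T$, with $T$ finite, is residually $p$-finite if and only if $T$ is a $p$-group. The free part is residually $p$-finite via the subgroups $p^k \mathbb{Z}^n$. If $T$ is a finite $p$-group, then $T$ itself is $p$-finite, so $A$ is residually $p$-finite. Conversely, suppose $T$ contains an element $t$ of prime order $\ell \neq p$. In any finite $p$-quotient the image of $t$ has order dividing both $\ell$ and a power of $p$, so the image is trivial; thus $t$ lies in every normal subgroup of $p$-power index, and $A$ is not residually $p$-finite. Combined with the direct product fact applied to $\Delta(G) \times (G \wedge G)$, this gives the first statement of b), since ``torsion subgroup $p$-finite'' means that $T$ is a $p$-group.

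For the ``in particular'' part, note that the torsion subgroup $T$ of $\Delta(G)$ is a $p$-group for every prime $p$ if and only if $T = 1$, that is, if and only if $\Delta(G)$ is torsion-free. Quantifying the first part of b) over all primes $p$ then yields the equivalence.

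I do not expect a serious obstacle. The only points to be careful about are that the isomorphism of Theorem \ref{2tor} is a genuine direct product decomposition, so that residual properties can be checked factor by factor, and that $\Delta(G)$ is finitely generated abelian. The latter holds because $\Delta(G)$ is a quotient of $\Gamma(G/G')$ and is central in $\nu(G)$, as recalled in the preliminaries.
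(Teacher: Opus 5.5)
Your proposal is correct and follows essentially the same route as the paper. Both use the splitting $G \otimes G \simeq \Delta(G) \times (G \wedge G)$ from Theorem \ref{2tor}, the fact that $\Delta(G)$ is a finitely generated abelian group (being a quotient of $\Gamma(G/G')$), and the criterion that a finitely generated abelian group is residually $p$-finite exactly when its torsion subgroup is a $p$-group; you merely fill in the routine details on direct products and torsion of order prime to $p$ that the paper leaves implicit.
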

 
 \begin{proof}

  Let $C$ be a finitely generated abelian group and $\Gamma(C)$ be  the Whitehead quadratc functor. By \cite{M-M-O} there is an exact sequence
  $$ C \otimes C \to \Gamma (C) \to C/ 2 C \to 0$$ Then $\Gamma(C)$ is a finitely generated abelian group.
   
    By  \cite{B-L1}, \cite{B-L2} $\Delta(G)$ is a quotient of the Whitehead quadratic functor $\Gamma(G/ G')$. Thus for $C = G/ G'$ finitely generated  we obtain that $\Delta(G)$ is a finitely generated abelian group, hence is residually finite.
   
   By \cite{B-F-M}  if $G/ G'$ does not contain 2-torsion  or  $G'$ has a complement in $G$ then $$G \otimes G \simeq \Delta(G) \times (G \wedge G)$$

a)    The above isomorphism implies  that $G \otimes G$ is residually finite if and only if  $G \wedge G$ is residually finite. 

b) Finally note that a finitely generated abelian group is residually $p$-finite if and only if its torsion subgroup is a $p$-group. Then apply this for $\Delta(G)$.
   \end{proof}

  \begin{prop} \label{iff}
  Suppose that $G$ is a finitely generated residually finite group such that $G \wedge G$ is finitely generated. Then $G \wedge G$ is residually finite if and only if the $H_2(G, \mathbb{Z}) \to H_2(\widehat{G}, \widehat{\mathbb{Z}})$ induced by the canonical map  $G \to \widehat{G}$ is injective.
  \end{prop}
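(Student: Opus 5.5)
One direction is already available: if $H_2(G,\Z)\to H_2(\widehat{G},\widehat{\Z})$ is injective, then Lemma \ref{res-finite} b) shows that $G\wedge G$ is residually finite. So the plan concerns only the converse. Assume $G\wedge G$ is residually finite and finitely generated, and prove that $\mu\circ\alpha$ is injective, where $\alpha$ and $\mu$ are the maps from the proof of Lemma \ref{res-finite}.

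\emph{Step 1: $\beta$ identifies $\widehat{G}\,\widehat{\wedge}\,\widehat{G}$ with the profinite completion of $G\wedge G$.} Since $G\wedge G$ is finitely generated and residually finite, it embeds into its profinite completion $\widehat{G\wedge G}$. For a finite quotient $G/U$, the map $G\wedge G\to (G/U)\wedge(G/U)$ is surjective because the exterior square is generated by the elements $g\wedge h$. Hence $\beta$ factors through $\widehat{G\wedge G}$, and it remains to show that every finite-index normal subgroup $N$ of $G\wedge G$ contains the kernel of $G\wedge G\to (G/U)\wedge(G/U)$ for a suitable $U$. This cofinality is the main obstacle. I would attack it as follows. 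The group $G$ acts on $G\wedge G$ by conjugation, and this action is compatible with the crossed-module map $\lambda : G\wedge G\to G$. Since $G\wedge G$ is finitely generated, it has only finitely many subgroups of index $[G\wedge G:N]$, so we may replace $N$ by a finite-index subgroup that is $G$-invariant. Next take $U$ of finite index inside the kernel of the action of $G$ on $(G\wedge G)/N$, and also inside $\lambda^{-1}$ of something suitable. Then $u\wedge g$ acts trivially modulo $N$ for $u\in U$; the relation $(u\wedge g)^{x}=({}^{\lambda(x)}u)\wedge({}^{\lambda(x)}g)$ together with the identity $(u\wedge g)=(u\,{}^{g}u^{-1})$ in the crossed-module sense should give $U\wedge G\subseteq N$ after a further shrinking of $U$. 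Finally, $\ker(G\wedge G\to (G/U)\wedge(G/U))$ is the normal closure of the image of $U\wedge G$, so it lies in $N$. If this direct approach gets messy, the fallback is the description $G\wedge G\simeq F'/[F,R]$ from Theorem \ref{wedge} with $F$ finitely generated free, and an argument that finite quotients of $F'/[F,R]$ factor through $(F/V)'/[F/V,R V/V]$ for finite-index $V\supseteq$ suitable subgroups.

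\emph{Step 2: injectivity of $\alpha$.} Once $\widehat{G}\,\widehat{\wedge}\,\widehat{G}\simeq\widehat{G\wedge G}$ is established, $\beta$ is injective because $G\wedge G$ is residually finite. Restricting $\beta$ to $H_2(G,\Z)=\ker(G\wedge G\to G')$ shows that $\alpha : H_2(G,\Z)\to\varprojlim_U H_2(G/U,\Z)$ is injective.

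\emph{Step 3: passing to $H_2(\widehat{G},\widehat{\Z})$.} It remains to check that $\mu$ is injective on the image of $\alpha$. Each $H_2(G/U,\Z)$ is finite, and for large $m$ it embeds into $H_2(G/U,\Z/m)$ by the universal coefficient theorem, since $H_2(G/U,\Z)\otimes\Z/m\to H_2(G/U,\Z/m)$ is injective. Taking $m$ divisible by the exponent of $H_2(G/U,\Z)$ gives $H_2\otimes\Z/m=H_2$. Passing to the inverse limit, which is left exact, shows that $\mu$ is injective. Therefore $\mu\circ\alpha$ is injective, completing the converse.
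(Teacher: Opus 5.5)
Your reduction is sound, and Steps 2 and 3 are correct. Injectivity of $\beta$ gives injectivity of $\alpha$ by the diagram in the proof of Lemma \ref{res-finite}, and your universal-coefficient argument does show that $\mu$ is injective. The gap is Step 1. It carries the entire content of the converse, and you leave it as a sketch that does not work as written.

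\begin{itemize}
\item The relation $u\wedge g=u\,{}^{g}u^{-1}$ only holds after applying $\lambda$, so it cannot be used to place $u\wedge g$ in $N$.
\item ``$U$ inside $\lambda^{-1}$ of something'' is not meaningful, since $\lambda^{-1}$ produces subsets of $G\wedge G$, not of $G$.
\item What the trivial action of $U$ on $(G\wedge G)/N$ actually yields, via the crossed-module identity $u\wedge\lambda(w)={}^{u}w\,w^{-1}$, is only that $u\wedge c\in N$ for $c\in G'$.
\end{itemize}

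Since $G/G'$ can be infinite, you still need an argument producing a \emph{finite-index} normal $U_1\le U$ with $u\wedge g\in N$ for all $g\in G$. One choice is the normal core of $\bigcap_i\{u\in U : u\wedge g_i\in N\}$, where the $g_i$ generate $G$ modulo $G'$. This works because, modulo the $G$-invariant subgroup $N$, $u\wedge g$ is multiplicative in $u\in U$. It is also multiplicative in $g$, twisted by the $G$-action. Hence $\{g : u\wedge g\in N\}$ is a subgroup containing $G'$ and all the $g_i$. None of this is in the proposal, and your fallback only restates what has to be proved.

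The shortest repair goes through $E=F/[F,R]$, which is where the paper's proof lives.

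\begin{itemize}
\item Since $F'/[F,R]\cong G\wedge G$ is finitely generated, you may take $N$ characteristic of finite index in it, so $N\trianglelefteq E$.
\item Then $E/N$ is finite-by-(finitely generated abelian). The centraliser of $(F'/[F,R])/N$ in $E/N$ has finite index and is finitely generated nilpotent, so $E/N$ is residually finite.
\item Hence there is a finite-index $M\trianglelefteq F$ containing $[F,R]$ with $M\cap F'\subseteq\tilde N$, the preimage of $N$.
\item For $V=RM$ one gets $[F,V]\subseteq[F,R](M\cap F')\subseteq\tilde N$. This is exactly your cofinality for $G/U\cong F/V$.
\end{itemize}

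But this is just residual finiteness of the extension $1\to G\wedge G\to F/[R,F]\to F/F'\to 1$. The paper obtains that in one line from Proposition \ref{SerSer}, using that $F/F'$ is good. It then restricts the embedding $F/[R,F]\hookrightarrow\widehat{F}/\overline{[\overline{R},\widehat{F}]}$ to $([F,F]\cap R)/[R,F]$. That restriction is identified with $H_2(G,\Z)\to H_2(\widehat{G},\widehat{\Z})$ by the discrete and profinite Hopf formulas. This way the paper never needs to compare $\widehat{G}\,\widehat{\wedge}\,\widehat{G}$ with $\widehat{G\wedge G}$, nor to pass through $\mu$.
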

  
\begin{proof}Suppose that $G \wedge G$ is residually finite.  Consider the short exact sequence of discrete groups
 \begin{equation} \label{123}  1 \to \frac{[F,F]}{[R,F]} \to \frac{F}{[R,F]} \to \frac{F}{[F,F]} \to 1 \end{equation}
 Since $\frac{[F,F]}{[R,F]}  \simeq G \wedge G$ we have that  $\frac{[F,F]}{[R,F]}$ is finitely generated,  residually finite. Note that $\frac{F}{[F,F]}$ is a finitely generated abelian group, hence is  (cohomologically) good and residually finite. 
 
By Proposition \ref{SerSer} applied   to the short exact sequence (\ref{123}) we conclude that
  $\frac{F}{[R,F]}$ is residually finite and so the canonical map $ \frac{F}{[R,F]}   \to \frac{ \widehat{F}}{\overline{[\overline{R},\widehat{F}]}}$ is an inclusion. But the restriction of this map to 
 $ \frac{[F,F] \cap R}{[R,F]} $ is the map $\alpha$ from the second proof of Lemma \ref{res-finite}, so $\alpha$ is injective. Note that using the Hopf formula for discrete and profinite groups,  $\alpha$ can be identified with the canonical map $H_2(G, \mathbb{Z}) \to H_2( \widehat{G}, \widehat{\mathbb{Z}} )$.
\end{proof} 

\begin{cor} Suppose that $G$ is a finitely generated residually finite group such that the commutator subgroup $G'$ is finitely generated and $H_2(G, \mathbb{Z})$  is finitely generated. Then $G \wedge G$ is residually finite if and only if the map $H_2(G, \mathbb{Z}) \to H_2(\widehat{G}, \widehat{\mathbb{Z}})$ induced by the canonical map  $G \to \widehat{G}$ is injective.
\end{cor}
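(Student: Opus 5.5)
The plan is to reduce the corollary to Proposition \ref{iff}. Once we know that $G \wedge G$ is finitely generated, the hypotheses of that proposition hold: $G$ is finitely generated and residually finite by assumption. The equivalence then follows verbatim. So the only real work is to show that $G \wedge G$ is finitely generated.

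For this I will use the short exact sequence recalled in the preliminaries, from \cite{B-L1}, \cite{B-L2}:
$$1 \to H_2(G, \mathbb{Z}) \to G \wedge G \to G' \to 1,$$
where the right-hand map is the commutator map $g_1 \wedge g_2 \mapsto [g_1, g_2]$. Equivalently, by Theorem \ref{wedge}, one can write $G \wedge G \simeq [F,F]/[R,F]$ with kernel $([F,F]\cap R)/[R,F] \simeq H_2(G,\mathbb{Z})$ by the Hopf formula, and quotient $[F,F]/([F,F]\cap R) \simeq G'$.

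The kernel $H_2(G,\mathbb{Z})$ is finitely generated by hypothesis, and the quotient $G'$ is finitely generated by hypothesis. An extension of a finitely generated group by a finitely generated group is finitely generated: lift generators of $G'$ to $G \wedge G$ and add generators of the kernel. Hence $G \wedge G$ is finitely generated.

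Applying Proposition \ref{iff} then gives that $G \wedge G$ is residually finite if and only if $H_2(G,\mathbb{Z}) \to H_2(\widehat{G},\widehat{\mathbb{Z}})$ is injective. No step is a genuine obstacle; the only point needing care is to invoke the correct exact sequence identifying the kernel of $G\wedge G \to G'$ with the Schur multiplier, which the paper has already recorded.
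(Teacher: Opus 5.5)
Your proposal is correct and is the paper's own argument. The paper uses the same exact sequence $1 \to H_2(G,\mathbb{Z}) \to G \wedge G \to G' \to 1$ to conclude that $G \wedge G$ is finitely generated, and then applies Proposition \ref{iff}.
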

\begin{proof} There is a short exact sequence $$ 1 \to H_2(G, \mathbb{Z}) \to G \wedge G \to G' \to 1,$$ hence $G \wedge G$ is finitely generated. Then we can apply Proposition \ref{iff}.
\end{proof}
  
  \section{Homological goodness } \label{goodness}

 We define a new class of groups $\mathcal{A}_n^{hom}$ using the following homological definition:   a group $G $ belongs to the class $\mathcal{A}_n^{hom}$ if  the canonical map $G \to \widehat{G}$ induces an isomorphism $H_i(G, A) \to H_i(\widehat{G}, A) $ for every finite discrete  $\widehat{G}$-module $A$ and  $0 \leq i \leq n$.
 
  A discrete group $G$ is called {\it (cohomologically) good} if $G \in \mathcal{A}_n$  for every $n$ and it is is called {\it (homologically) good} if $G \in \mathcal{A}_n^{hom}$  for every $n$.

      \begin{lemma} \label{hom-goof-injective} If $G \in \mathcal{A}_2^{hom}$  and $H_2(G, \mathbb{Z})$ is finitely generated then  $H_2(G, \mathbb{Z}) \to H_2(\widehat{G}, \widehat{\mathbb{Z}})$ is injective. 
   \end{lemma}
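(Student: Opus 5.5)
We reduce injectivity into $\widehat{\mathbb{Z}}$ to injectivity with finite coefficients $\mathbb{Z}/m\mathbb{Z}$, where the hypothesis $G \in \mathcal{A}_2^{hom}$ gives isomorphisms, and then use that a finitely generated abelian group is residually finite. Write $A = H_2(G,\mathbb{Z})$ and take a nonzero $x \in A$. We must find some finite quotient detecting $x$ in $H_2(\widehat{G}, \widehat{\mathbb{Z}}) \simeq \varprojlim_m H_2(\widehat{G}, \mathbb{Z}/m\mathbb{Z})$. This description of profinite homology with coefficients in $\widehat{\mathbb{Z}} = \varprojlim \mathbb{Z}/m$ is the one recalled in the preliminaries, with $\mathbb{Z}/m$ viewed as a trivial module. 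It therefore suffices to find an $m$ such that the image of $x$ in $H_2(\widehat{G},\mathbb{Z}/m)$ is nonzero, because the composite $H_2(G,\mathbb{Z}) \to H_2(\widehat{G},\widehat{\mathbb{Z}}) \to H_2(\widehat{G},\mathbb{Z}/m)$ equals $H_2(G,\mathbb{Z}) \to H_2(G,\mathbb{Z}/m) \to H_2(\widehat{G},\mathbb{Z}/m)$ by naturality.

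The key steps are as follows.

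First, by the universal coefficient theorem there is a natural short exact sequence
$$0 \to H_2(G,\mathbb{Z}) \otimes \mathbb{Z}/m \to H_2(G,\mathbb{Z}/m) \to \mathrm{Tor}(H_1(G,\mathbb{Z}),\mathbb{Z}/m) \to 0,$$
so the map $A/mA \to H_2(G,\mathbb{Z}/m)$ is injective.

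Second, since $A$ is finitely generated abelian, it is residually finite. More precisely, there is an $m$ with $x \notin mA$: if $x$ has infinite order or is torsion, choose $m$ to be a large multiple of the torsion exponent times a prime not dividing the relevant coordinate. Concretely, write $A \simeq \mathbb{Z}^r \oplus T$ and take $m = |T|\cdot N$ with $N$ large; then $mA = m\mathbb{Z}^r$ and $x \notin mA$ for suitable $N$. Hence $x$ maps to a nonzero element of $H_2(G,\mathbb{Z}/m)$.

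Third, apply $G \in \mathcal{A}_2^{hom}$ with $A = \mathbb{Z}/m$ a finite trivial $\widehat{G}$-module. The map $H_2(G,\mathbb{Z}/m) \to H_2(\widehat{G},\mathbb{Z}/m)$ is then an isomorphism, so the image of $x$ there is nonzero. By the naturality remark in the first paragraph, the image of $x$ in $H_2(\widehat{G},\widehat{\mathbb{Z}})$ is nonzero.

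The only delicate point is compatibility. The map $H_2(G,\mathbb{Z}) \to H_2(\widehat{G},\widehat{\mathbb{Z}})$ must factor through the reduction maps to $H_2(\widehat{G},\mathbb{Z}/m)$ in the way described, that is, the coefficient change $\widehat{\mathbb{Z}} \to \mathbb{Z}/m$ must commute with the change of groups $G \to \widehat{G}$. I would check this directly from the definition $H_2(\widehat{G},\widehat{\mathbb{Z}}) = \varprojlim_{U,m} H_2(G/U,\mathbb{Z}/m)$. Under that definition, the canonical map is induced by the compatible family $H_2(G,\mathbb{Z}) \to H_2(G/U,\mathbb{Z}/m)$, and the projection to the $m$-component is by definition the composite through $H_2(G,\mathbb{Z}/m)$. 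This projection identifies $H_2(\widehat{G},\mathbb{Z}/m) = \varprojlim_U H_2(G/U,\mathbb{Z}/m)$. The identification is valid because the groups involved are finite, so no $\varprojlim^1$ issues arise.

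Combined with Lemma \ref{res-finite} b), this immediately yields Corollary B.
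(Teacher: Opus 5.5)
Your proof is correct and follows the paper's argument: the universal coefficient injection $H_2(G,\mathbb{Z})\otimes\mathbb{Z}/m\hookrightarrow H_2(G,\mathbb{Z}/m)$, the $\mathcal{A}_2^{hom}$ isomorphism $H_2(G,\mathbb{Z}/m)\simeq H_2(\widehat{G},\mathbb{Z}/m)$, and $\bigcap_m mH_2(G,\mathbb{Z})=0$ for a finitely generated abelian group. The only difference is presentation: the paper passes to $\varprojlim_m$ and uses $\varprojlim_m H_2(\widehat{G},\mathbb{Z}/m)\simeq H_2(\widehat{G},\widehat{\mathbb{Z}})$, whereas you argue element by element, which needs only the naturality of the reduction $H_2(\widehat{G},\widehat{\mathbb{Z}})\to H_2(\widehat{G},\mathbb{Z}/m)$.
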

   
   \begin{proof}
  By the universal coeficients theorem there is a short exact sequence
    $$0 \to H_2(G, \mathbb{Z}) \otimes_{\mathbb{Z}} \mathbb{Z}/ n \mathbb{Z} \to H_2(G,  \mathbb{Z}/ n \mathbb{Z}) \to Tor_1^{\mathbb{Z}}( H_1(G, \mathbb{Z}),  \mathbb{Z}/ n \mathbb{Z}) \to 0,$$
    in particular there is an exact sequence
    $$0 \to H_2(G, \mathbb{Z}) \otimes_{\mathbb{Z}} \mathbb{Z}/ n \mathbb{Z} \to H_2(G,  \mathbb{Z}/ n \mathbb{Z}) $$ Applying inverse limit over all positive integers $n$ we have an exact sequence
    \begin{equation} \label{eq111} 0 \to \varprojlim_n H_2(G, \mathbb{Z}) \otimes_{\mathbb{Z}} \mathbb{Z}/ n \mathbb{Z} \to \varprojlim_n H_2(G,  \mathbb{Z}/ n \mathbb{Z}) \end{equation}
    Since $G \in \mathcal{A}_2^{hom}$  there is an isomorphism $H_2(G,  \mathbb{Z}/ n \mathbb{Z}) \simeq H_2(\widehat{G},  \mathbb{Z}/ n \mathbb{Z})$, hence
    \begin{equation} \label{eq222}  \varprojlim_n H_2(G,  \mathbb{Z}/ n \mathbb{Z}) \simeq \varprojlim_n H_2(\widehat{G},  \mathbb{Z}/ n \mathbb{Z}) \simeq H_2(\widehat{G}, \varprojlim_n \mathbb{Z}/ n \mathbb{Z}) \simeq H_2(\widehat{G}, \widehat{\mathbb{Z}}). 
    \end{equation}
    We have used above $ \varprojlim_n \mathbb{Z}/ n \mathbb{Z} \simeq \widehat{\mathbb{Z}}$.
    Note that 
   $$\varprojlim_n H_2(G, \mathbb{Z}) \otimes_{\mathbb{Z}} \mathbb{Z}/ n \mathbb{Z} \simeq \varprojlim_n H_2(G, \mathbb{Z}) / n H_2(G, \mathbb{Z})$$  hence the map
   $$ H_2(G, \mathbb{Z}) \to \varprojlim_n H_2(G, \mathbb{Z}) \otimes_{\mathbb{Z}} \mathbb{Z}/ n \mathbb{Z}$$ has kernel $ \cap_n n H_2(G, \mathbb{Z})$.
   Since $H_2(G, \mathbb{Z}) $ is a finitely generated abelian group we have that 
   $\cap_n n H_2(G, \mathbb{Z}) = 0$. 
   Then (\ref{eq111}), (\ref{eq222})  complete the proof.
   \end{proof}

 \begin{cor}[= Corollary B] \label{equi}
If $G \in \mathcal{A}_2^{hom}$, $G$ is residually finite and $H_2(G, \z)$ is finitely generated then the map $H_2(G, \Z) \to H_2( \widehat{G}, \widehat{\Z})$ is injective and $G \wedge G$ is residually finite.
\end{cor}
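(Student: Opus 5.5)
This follows by chaining two results already established in the paper. First, the hypotheses $G \in \mathcal{A}_2^{hom}$ and $H_2(G, \Z)$ finitely generated are exactly those of Lemma \ref{hom-goof-injective}. Invoking that lemma gives injectivity of the canonical map $H_2(G, \Z) \to H_2(\widehat{G}, \widehat{\Z})$. This settles the first assertion of the corollary.

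For the second assertion, I combine this injectivity with the residual finiteness of $G$. Together they are precisely the hypotheses of part b) of Lemma \ref{res-finite}, equivalently part a1) of Theorem A. That lemma then yields that $G \wedge G$ is residually finite.

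I should check that no extra hypothesis is silently required. Finite generation of $G$ is used only in the second proof of Lemma \ref{res-finite}. The first proof, via the inverse limit of the exact sequences $1 \to H_2(G/U,\Z) \to (G/U)\wedge(G/U) \to [G/U,G/U] \to 1$ and the Mittag-Leffler argument, needs only residual finiteness of $G$ and injectivity of $\mu\circ\alpha$. So the corollary holds without finite generation of $G$.

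The only point that needs care is already handled inside Lemma \ref{hom-goof-injective}. There one needs $\bigcap_n nH_2(G,\Z)=0$, which uses finite generation of $H_2(G,\Z)$. One also needs the identification $\varprojlim_n H_2(\widehat G,\Z/n\Z)\simeq H_2(\widehat G,\widehat\Z)$. Hence there is no real obstacle, and the proof is two lines.
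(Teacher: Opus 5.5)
Your proposal is correct and follows the paper's own route: Corollary B is stated without a separate proof because it is exactly the combination of Lemma \ref{hom-goof-injective}, which gives injectivity of $H_2(G,\Z)\to H_2(\widehat{G},\widehat{\Z})$, with Lemma \ref{res-finite} b), i.e.\ Theorem A a1). You are also right that only the first proof of Lemma \ref{res-finite} is needed, and that proof does not require $G$ to be finitely generated.
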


 \section{Metabelian groups  and central-by-metabelian groups} \label{meta}

 \begin{theorem} \label{fin-pres1}
 Let $G$ be a finitely presented metabelian group. Then
 
 a) the canonical map $H_2(G, \mathbb{Z}) \to H_2(\widehat{G}, \widehat{\mathbb{Z}})$ is injective;

 b)  $G \wedge G$ is residually finite. 
 \end{theorem}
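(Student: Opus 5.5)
Part b) follows from a) by Lemma \ref{res-finite} b): a finitely presented metabelian group is finitely generated metabelian, hence residually finite by Hall's theorem. So the work is in a). The plan is to show first that $G \wedge G$ is residually finite by a direct structural argument, and then to deduce a) from Proposition \ref{iff}. In other words, the reverse implication of Theorem A b) is used as the engine.

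\textbf{Step 1: $G \wedge G$ is finitely generated.} Write $1 \to M \to G \to Q \to 1$ with $M = G'$ and $Q = G/G'$. Here $M$ is a finitely generated $\mathbb{Z}Q$-module. We have the exact sequence $1 \to H_2(G,\mathbb{Z}) \to G \wedge G \to G' \to 1$, and since $G$ is finitely presented, $H_2(G,\mathbb{Z})$ is a finitely generated abelian group. Hence $G \wedge G$ is an extension of a finitely generated abelian group by the normal subgroup $G'$. Rather than needing $G'$ finitely generated (which fails in general), we argue as follows. The group $G \wedge G$ is a quotient of $G \otimes G = [G, G^{\varphi}] \le \nu(G)$. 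Moreover $G \wedge G \simeq F'/[F,R]$ by Theorem \ref{wedge}. It is a normal subgroup of $F/[F,R]$ with abelian quotient $F/F'$, finitely generated. Finite generation of $G\wedge G$ is therefore not automatic, and we avoid needing it directly: instead we study the group $E = F/[F,R]$ itself.

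\textbf{Step 2: $E = F/[R,F]$ is a finitely generated abelian-by-polycyclic group, hence residually finite.} Take $F$ free of finite rank. Then $R/[R,F]$ is central in $E$ and $E/(R/[R,F]) \simeq G$ is metabelian, so $E$ is centre-by-metabelian and finitely generated. The key claim is that $E$ is abelian-by-polycyclic. I would prove this via Groves' theorem, quoted in the introduction: finitely presented centre-by-metabelian groups are abelian-by-polycyclic. Since $E$ need not be finitely presented, I would instead pass to $E_0 = F/[R,F]N$, where $N$ is the normal closure of finitely many relators, so that $R/[R,F]$ is replaced by its finitely generated image. Since $G$ is finitely presented, $R$ is the normal closure of finitely many $r_i$. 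Then $R/[R,F]$ is generated by the images of the $r_i$, so it is a finitely generated central subgroup of $E$. Thus $E$ is central-by-(finitely presented metabelian) with finitely generated centre part. An extension of a finitely presented group by a finitely generated group is finitely presented, so $E$ is finitely presented centre-by-metabelian. Groves then gives that $E$ is abelian-by-polycyclic, and Jategaonkar–Roseblade gives that $E$ is residually finite. The same finite generation shows $G \wedge G \le E$ is residually finite, which proves b) directly.

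\textbf{Step 3: deduce a).} The canonical map $E \to \widehat{F}/\overline{[\overline{R},\widehat{F}]}$ factors through $\widehat{E}$. Since $E$ is residually finite, $E \to \widehat{E}$ is injective. I would check that $\widehat{E} \simeq \widehat{F}/\overline{[\overline{R},\widehat{F}]}$, using that $R/[R,F]$ is finitely generated, so its closure is the image of $\overline{R}$. Then the restriction of this map to $([F,F]\cap R)/[R,F]$ is the map $\alpha$ from the second proof of Lemma \ref{res-finite}, exactly as in the proof of Proposition \ref{iff}. Via the Hopf formulas, $\alpha$ is the canonical map $H_2(G,\mathbb{Z}) \to H_2(\widehat{G},\widehat{\mathbb{Z}})$, and it is injective. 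The main obstacle is this identification of $\widehat{E}$ with the profinite quotient; equivalently, verifying that $\overline{[\overline{R},\widehat{F}]} \cap E = [R,F]$, which follows from residual finiteness of $E$.
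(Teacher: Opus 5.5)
Your Steps 2--3 follow the paper's proof essentially step for step. The paper also notes that $R/[R,F]$ is a finitely generated, hence finitely presented, central subgroup, so $B=F/[F,R]$ is finitely presented and centre-by-metabelian. It then applies Groves and Jategaonkar--Roseblade to get $B$ residually finite, and obtains injectivity on $H_2$ by restricting $B\to\widehat{B}\simeq\widehat{F}/\overline{[R,F]}$ to $(R\cap F')/[R,F]$; b) comes from Theorem A a) or directly from $G\wedge G\simeq[B,B]\le B$.

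Two minor corrections. First, the identifications $\widehat{F/N}\simeq\widehat{F}/\overline{N}$ and $\overline{[\overline{R},\widehat{F}]}=\overline{[R,F]}$ hold for any normal subgroup and need no finite generation, so this step is not really an obstacle. Second, the extension fact you quote requires the kernel to be finitely presented, not merely finitely generated; that holds here because the kernel is finitely generated abelian.
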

 
 \begin{proof}
 a) Let $G = F/ R$ be a metabelian group with $F$ finitely generated free group. Then
 $$H_2(G, \mathbb{Z}) \simeq R \cap F'/ [F,R]$$
 Consider $B = F/ [F,R]$.  Since $G$ is finitely presented,
 $R$ is finitely generated as a normal subgroup of $F$. Hence $R/ [R, F]$ is a finitely generated abelian group, so it is finitely presented. Then
  $B$ is finitely presented, central-by-metabelian and by the main result of \cite{Groves}  $B$ is abelian-by-polycyclic. By the 
 Roseblade-Jategaonkar result \cite{J}, \cite{R} finitely generated abelian-by-polycyclic groups are residually finite, in particular $B$ is residually finite.

 Let $\widehat{F}$ be the profinite completion of $F$ and for a subgroup $M$ of $F$ write  $\overline{M}$ be the closure of $M$ in $\widehat{F}$. Then we have short exact sequences of groups
 $$1 \to R/[R, F] \to B \to G \to 1
 \hbox{ and } 1 \to \overline{R}/ \overline{[R,F]} \to \widehat{B} \to \widehat{G} \to 1$$
 To justify the last exact sequence it suffices to consider  short exact sequences
 $$ 1 \to \overline{R} \to \widehat{F} \to \widehat{G} \to 1 \hbox{ and }  1 \to \overline{[R,F]} \to \widehat{F} \to \widehat{B} \to 1$$
 The first exact sequence induces a short  exact sequence
$  1 \to \overline{R}/  \overline{[R,F]} \to \widehat{F}/ \overline{[R,F]} \to \widehat{G} \to 1$, that together with the isomorphism $\widehat{F}/ \overline{[R,F]} \simeq \widehat{B}$, completes the argument of the exactness.

 Note that $\overline{[\overline{R}, \widehat{F}]} = \overline{[R,F]}$.
 Suppose that the map
 $$R \cap F'/ [R,F] \simeq H_2(G, \mathbb{Z}) \to H_2(\widehat{G}, \widehat{\mathbb{Z}}) \simeq \overline{R} \cap  \overline{ [\widehat{F}, \widehat{F}]} / \overline{[\overline{R}, \widehat{F}]}$$
 is  not injective. Then the map
 $R/ [R, F] \to \overline{R}/ \overline{[R,F]}$, is not injective, hence $B \to \widehat{B}$ is not injective i.e. $B$ is not residually finite,
a contradiction. Hence $H_2(G, \mathbb{Z}) \to H_2(\widehat{G}, \widehat{\mathbb{Z}})$ is injective.
 
b) Finally note that by Hall result finitely generated metabelian groups are residually finite. Then by Theorem A, a)  $G \wedge G$ is residually finite.
 
 There is an alternative proof of b) that does not use Theorem A but uses the description of $G \wedge G$ given by Theorem \ref{wedge} i.e. $G \wedge G \simeq [F,F]/ [R, F] = [B, B]$ and the fact proved above that $B$ is residually finite, so its subgroup $[B,B]$  is residually finite.
 \end{proof}
 
Let $\sigma: \mathbb{Z} Q \to \mathbb{Z} Q$ be the $\mathbb{Z}$-linear map that sends $q \in Q$ to $q^{-1}$.
 
  \begin{theorem} \label{a1}
 Let $1 \to A \to G \to Q \to 1$ be  a short exact sequence of groups, $A$ and $Q$  abelian and $G$ finitely generated. Suppose that for $I = ann_{\mathbb{Z} Q} A$ we have that $\mathbb{Z} Q/ (I + \sigma(I))$ is finitely generated as an additive group. Then

 a) the canonical map $H_2(G, \mathbb{Z}) \to H_2(\widehat{G}, \widehat{\mathbb{Z}})$ is injective;

 b)  $G \wedge G$ is residually finite. 
 \end{theorem}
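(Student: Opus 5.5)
I would follow the strategy of the proof of Theorem \ref{fin-pres1}. Write $G = F/R$ with $F$ a finitely generated free group and set $B = F/[R,F]$. Since $H_2(G,\mathbb{Z}) \simeq (R\cap F')/[R,F]$ sits inside the central subgroup $R/[R,F]$ of $B$, the same closure argument as in Theorem \ref{fin-pres1} applies. If the map $H_2(G,\mathbb{Z}) \to H_2(\widehat{G},\widehat{\mathbb{Z}})$ were not injective, then $R/[R,F] \to \overline{R}/\overline{[R,F]}$ would not be injective, so $B \to \widehat{B}$ would not be injective. Hence a) reduces to showing that $B$ is residually finite. Part b) then follows: either combine a) with Theorem A a1) and Hall's theorem (finitely generated metabelian groups are residually finite), or note directly that $G\wedge G \simeq [F,F]/[R,F] = [B,B]$ by Theorem \ref{wedge}, a subgroup of the residually finite group $B$. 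The real work, replacing Groves' theorem (which needs finite presentability), is therefore to prove that $B$ is residually finite using the hypothesis on $I + \sigma(I)$.

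Let $N \le B$ be the preimage of $A$. Then $N$ is an extension of the central subgroup $Z = R/[R,F]$ by the abelian group $A$, so $N$ is nilpotent of class at most $2$ and $[N,N] \le Z$ is central in $B$.

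The commutator map induces a biadditive map $A \times A \to [N,N]$, $(a,b) \mapsto [\tilde a,\tilde b]$, where $\tilde a, \tilde b$ are lifts. This map is invariant under the diagonal conjugation action of $Q$, because $[N,N]$ is central: $[a^q,b^q] = [a,b]$. Hence $[N,N]$ is an epimorphic image of $A \otimes_{\mathbb{Z}Q} A$, where $Q$ acts on the left factor and via $q \mapsto q^{-1}$ on the right factor. In this tensor product $xa \otimes b = a \otimes \sigma(x) b$, so it is annihilated (through the left factor) by both $I$ and $\sigma(I)$. It is therefore a module over $\mathbb{Z}Q/(I+\sigma(I))$. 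Since $A$ is a finitely generated $\mathbb{Z}Q$-module ($G$ is finitely generated metabelian), this module is finitely generated, and by hypothesis $\mathbb{Z}Q/(I+\sigma(I))$ is finitely generated as an additive group. Consequently $[N,N]$ is a finitely generated abelian group. This is the key step where the hypothesis is used.

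It remains to deduce residual finiteness of $B$, and this is the step I expect to be the main obstacle. Let $1 \ne x \in B$. If $x \notin [N,N]$, its image in $B/[N,N]$ is nontrivial. That group is finitely generated and metabelian (it is $N/[N,N]$-by-$Q$), hence residually finite by Hall, so $x$ is separated. If $x \in [N,N]$, choose $m$ with $x \notin m[N,N]$; this is possible since $[N,N]$ is finitely generated abelian. Pass to $\bar B = B/m[N,N]$.

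In $\bar B$, for $a,b \in N$ we have $[a^m,b] = [a,b]^m = 1$, so $\bar N^m$ is central in $\bar N$. I would then work with the abelian normal subgroup $K = \bar N^m\,[\bar N,\bar N]$ of $\bar B$. The quotient $\bar B/K$ is a finitely generated extension of the nilpotent group $\bar N/K$ of finite exponent by $Q$. I would try to show, using the action on the finite group $[\bar N,\bar N]$ and the fact that its centralizer $C$ has finite index in $\bar B$, that $x$ survives in a quotient of $C$ that is abelian-by-nilpotent. Hall's theorem \cite{Hall2} then gives a finite quotient in which $x \ne 1$.

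If this direct route proves awkward, the fallback is different. It uses $[N,N]$ finitely generated central and $B/[N,N]$ finitely generated metabelian. I would try to verify that $B/[N,N]$ is close enough to abelian-by-polycyclic (or that the extension splits up to finite index) to apply Roseblade--Jategaonkar \cite{J}, \cite{R} to a finite-index subgroup of $B$.
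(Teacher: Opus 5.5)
\textbf{There is a genuine gap in the final step.} Your reduction of a) and b) to the residual finiteness of $B=F/[R,F]$ is the paper's. Your argument that $[N,N]$ is an epimorphic image of $A\otimes_{\mathbb{Z}Q}A^{\sigma}$, and hence finitely generated, is correct.

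The trouble is the step you leave open. What you have established is that $[N,N]$ is finitely generated and central and that $B/[N,N]$ is finitely generated metabelian. This does not imply residual finiteness, and your sketch never uses the hypothesis on $I+\sigma(I)$ again. Here is a counterexample to that implication. Let $N_1$ be the class-$2$ group generated by $u_i$ $(i\in\mathbb{Z})$ and a central element $z$ of order $p$, with $[u_i,u_j]=z$ for $i<j$, and let $t$ act by $u_i\mapsto u_{i+1}$. Put $\Gamma=N_1\rtimes\langle t\rangle$. Then:
\begin{itemize}
\item $\Gamma$ is generated by $u_0$ and $t$;
\item $[N_1,N_1]=\langle z\rangle$ is finite and central;
\item $\Gamma/\langle z\rangle\simeq\mathbb{Z}\wr\mathbb{Z}$.
\end{itemize}
Yet in every finite quotient some positive power $t^n$ dies. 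Then $z=[u_0,u_n]$ maps to $[\bar u_0,\bar u_0]=1$, so $\Gamma$ is not residually finite (compare \cite{Groves2}). Thus $\Gamma$ has everything you prove about $\bar B$, with $x=z$ inseparable.

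Your sketch does not get around this. Since $[\bar N,\bar N]$ is central, its centralizer is all of $\bar B$. Also $x\in K$ dies in $\bar B/K$. Nothing you have shown produces an abelian-by-nilpotent quotient retaining $x$; indeed $\Gamma$ cannot be abelian-by-nilpotent, by Hall's theorem. The fallback misplaces the difficulty: $B/[N,N]$ is already metabelian, and the problem is the central extension by $[N,N]$.

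\textbf{The missing idea (Lemma \ref{sexta}).} Use the twisted commutator pairing on $V=N/Z(N)$, rather than only bounding its values.
\begin{itemize}
\item The pairing $V\times V\to[N,N]$ is nondegenerate by definition of $Z(N)$.
\item Since $[v\lambda,w]=[v,w\sigma(\lambda)]$, nondegeneracy forces $J=\mathrm{ann}_{\mathbb{Z}Q}(V)$ to satisfy $\sigma(J)=J$.
\item Since $Z=R/[R,F]\subseteq Z(N)$, $V$ is a quotient of $A$. So $I+\sigma(I)\subseteq J$, and $V$, a finitely generated $\mathbb{Z}Q/J$-module, is a finitely generated abelian group.
\item Hence $Z(N)$ is an abelian normal subgroup of $B$ with $B/Z(N)$ polycyclic. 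The Roseblade--Jategaonkar theorem then gives that $B$ is residually finite. Hall's theorem is not enough here, because $B/Z(N)$ need not be nilpotent.
\end{itemize}
In the example above, $V\simeq\mathbb{F}_pQ$ is infinite, which is exactly what the hypothesis excludes.
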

 
 \begin{proof} The proof is the same as the proof of Theorem \ref{fin-pres1}, we need only that $B = F/ [F,R]$ is finitely generated abelian-by-polycyclic group. This is proved in Lemma \ref{sexta}.
 \end{proof}
 
 \begin{lemma} \label{sexta} Let $1 \to M \to H \to G \to 1$ be a short exact sequence of groups with $M$ central in $H$, $H$ finitely generated. Let  $1 \to A \to G \to Q \to 1$ be  a short exact sequence of groups, $A$ and $Q$  abelian.  Suppose that for $I = ann_{\mathbb{Z} Q} A$ we have that $\mathbb{Z} Q/ (I + \sigma(I))$ is finitely generated as an additive group. Then $H$ is abelian-by-polycyclic.
 \end{lemma}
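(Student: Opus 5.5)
Let $N$ be the preimage of $A$ in $H$. Then $H/N \simeq Q$ is abelian, and $M \leq Z(N)$ with $N/M \simeq A$ abelian, so $N$ is nilpotent of class at most $2$. The aim is to find an abelian normal subgroup $C$ of $H$ with $H/C$ polycyclic. The natural candidate is $C = M \cdot N_0$ for a suitable piece $N_0 \le N$, or more simply a subgroup $C$ with $[N,N] \leq C \leq N$ chosen so that $N/C$ is a finitely generated abelian group. Since $H/N$ is finitely generated abelian, any such $C$ gives $H/C$ (finitely generated abelian)-by-(finitely generated abelian), hence polycyclic.

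The key object is the commutator map. Since $N$ has class $\le 2$ and $M$ is central, commutation induces a $\mathbb{Z}$-bilinear map
$$A \times A \to [N,N] \leq M, \qquad (aM, bM) \mapsto [a,b].$$
It is alternating, and it is $H$-equivariant: conjugating by $h \in H$ acts trivially on $M$. Hence $[a^q, b^q] = [a,b]$ for $q \in Q$, i.e. $[a^{q}, b] = [a, b^{q^{-1}}]$. From this, for $\lambda \in \mathbb{Z}Q$ we get $[a\lambda, b] = [a, b\,\sigma(\lambda)]$ in additive notation. Consequently, if $\lambda \in I$ then $[a\lambda, b] = 0$ and $[a, b\lambda]=0$. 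The second fact gives $[a\sigma(\lambda),b] = [a, b\lambda]= 0$, so also $\sigma(I)$ kills the pairing. Hence the pairing factors through
$$A/A(I+\sigma(I)) \times A/A(I+\sigma(I)) \to M.$$

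Since $H$ is finitely generated and $Q$ is finitely generated abelian, $A$ is a finitely generated $\mathbb{Z}Q$-module by the usual Hall argument ($A$ is a normal subgroup of the finitely generated metabelian quotient of $G$). Thus $\bar A = A/A(I+\sigma(I))$ is a finitely generated module over $\mathbb{Z}Q/(I+\sigma(I))$, which is finitely generated as an abelian group. So $\bar A$ is a finitely generated abelian group, and $[N,N]$ is generated by the images of finitely many pairs. In fact more is true: choose $n_1,\dots,n_k \in N$ whose images span $\bar A$ additively. Put $C = \langle M, \ \text{preimage of } A(I+\sigma(I)) \rangle$, i.e. the preimage $C$ in $N$ of $A(I+\sigma(I))$.

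Then $C$ is normal in $H$, because $A(I+\sigma(I))$ is a $\mathbb{Z}Q$-submodule. Its commutator satisfies $[C, N] = 1$, since the pairing vanishes whenever one argument lies in $A(I+\sigma(I))$. In particular $C$ is abelian; it is even central in $N$. Moreover $N/C \simeq \bar A$ is finitely generated abelian and $H/N \simeq Q$ is finitely generated abelian, so $H/C$ is polycyclic, and $H$ is abelian-by-polycyclic.

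The main technical step is checking the twisted equivariance $[a\lambda,b]=[a,b\sigma(\lambda)]$ carefully with left/right conventions, and verifying that $A$ is finitely generated as a $\mathbb{Z}Q$-module. For the latter, $G$ is a quotient of the finitely generated group $H$, so $G$ is a finitely generated metabelian group, and Hall's theorem gives that $A$ is finitely generated over $\mathbb{Z}Q$ (when $A \supseteq G'$ this is standard; in general, $A$ is a normal subgroup with $G/A$ finitely presented abelian, hence normally finitely generated, and as a $\mathbb{Z}Q$-module it is generated by those normal generators together with $G'$, which is a finitely generated $\mathbb{Z}(G/G')$-module).
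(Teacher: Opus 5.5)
Your proof is correct and follows essentially the same route as the paper. Both arguments rest on the identity $[a^h,b]=[a,b^{h^{-1}}]$ coming from centrality of $M$, which gives $[a\lambda,b]=[a,b\sigma(\lambda)]$ and makes $I+\sigma(I)$ act trivially on a suitable quotient of the preimage $N$ of $A$. The only difference is the choice of $C$: the paper takes $C=Z(N)$ and shows that $ann_{\mathbb{Z}Q}(N/Z(N))$ is $\sigma$-stable and contains $I$, while you take the smaller preimage of $A\sigma(I)$ and check directly that it is central in $N$.

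A minor remark on your last paragraph: since $Q=G/A$ is abelian, $G'\le A$ always holds, so the case distinction there is unnecessary. Moreover, because $Q$ is finitely presented, $A$ is the normal closure of finitely many elements, and this alone already gives that $A$ is finitely generated as a $\mathbb{Z}Q$-module.
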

 
 \begin{proof} Let $A_0$ be the preimage of $A$ in $H$ and $C = Z(A_0)$. Note that $M \subseteq C$ and   let $V = A_0/ C$. 
 We view $V$ as a $\mathbb{Z} Q$-module, where the $Q$-action is induced by conjugation.
 
 Note that for $h \in H$, $a_1, a_2 \in A_0$  we have
 $$[a_1^h, a_2] = [a_1, a_2^{h^{-1}}]^h = [a_1, a_2^{h^{-1}}]$$ since $[a_1, a_2^{h^{-1}}] \in A_0' \subseteq M \subseteq Z(H)$.
 
 Let $J = ann_{\mathbb{Z} Q} (V)$. The equality  $[a_1^h, a_2] = [a_1, a_2^{h^{-1}}]$
 implies that if $\lambda \in J$ then $\sigma(\lambda) \in J$. 
 
 Note that $V$ as a $\mathbb{Z}Q$-module is a quotient of $A$, hence $I \subseteq J$ and so $\sigma(I) \subseteq \sigma(J) = J$ and $I + \sigma(I) \subseteq J$. Thus $\mathbb{Z} Q/ J$  is a quotient of  $\mathbb{Z} Q/ (I + \sigma(I))$, so $\mathbb{Z} Q/ J$  is finitely generated as an additive group.
 
 Note that $A$ is finitely generated as $\mathbb{Z} Q$-module, hence its quotient $V$ is finitely generated as $\mathbb{Z} Q$-module. This combined with the fact that  $\mathbb{Z} Q/ J$ is finitely generated as an additive group implies that  $V$ is finitely generated as an additive group. Thus $H/ C$ is polycyclic and $C$ is abelian.
 \end{proof}
 
 \medskip
 {\bf Examples} 1) Consider the group $G = \mathbb{Z}[\frac{1}{6}] \rtimes \mathbb{Z}$, where $\mathbb{Z} = Q$ has a generator $x$ that acts on $A = \mathbb{Z}[\frac{1}{6}] $ by multiplication by $\frac{2}{3}$. By the Bieri-Strebel criterion Theorem \ref{classification} $G$ is not finitely presented. Here $I = ( 3x - 2) = ann_{\mathbb{Z} Q} A$. Note that $I + \sigma(I) = (3x-2,3x^{-1} - 2) = ( 3 x - 2, 3 - 2x) $ and $- 13 = 2(3x - 2) - 3( 3 - 2x) \in I + \sigma(I)$ implies that $\mathbb{Z} Q/(I + \sigma(I)) \simeq \mathbb{Z}_{13}$.
 
 By Theorem \ref{a1} $G \wedge G$ is residually finite.
 
 Note that $G'= [A, Q] = A ( \frac{2}{3} - 1) = A$, so $G/ G'\simeq Q \simeq \mathbb{Z}$ has no 2-torsion. Then by Theorem A, d) $G \otimes G$ is residually finite. By Proposition \ref{ab-pol1}   $H_2(G, \mathbb{Z})$ is finitely generated.
 
\medskip 
 2) Let $Q = \mathbb{Z} = \langle x \rangle$, $A = \mathbb{Z} Q$, $G = A \rtimes Q$.  The conjugation action of $x$ on $A$ corresponds to multiplication with $x$ in $A$. Note that $G$ is not finitely presented, so we cannot apply Theorem \ref{fin-pres1} and since $ann_{\mathbb{Z} Q} (A) = 0$ we cannot apply Theorem \ref{a1}.
 
 Consider the LHS spectral sequence $E_{p,q}^2 = H_p(Q, H_q(A, \mathbb{Z}))$ converging to $H_2(G, \mathbb{Z})$. Then  $E_{2,0}^2 = 0$  and  $$E_{1,1}^2 = H_1(Q, H_1(A, \mathbb{Z})) = H_1(Q, A) \simeq H^0 (Q, A) = 0$$ the spectral sequence collapses and so $$H_2(G, \mathbb{Z}) \simeq E_{0,2}^2 = H_0(Q, H_2(A, \mathbb{Z})) \simeq H_0(Q, A \wedge A)$$
 
 Note that $\widehat{G} = B \rtimes \widehat{Q}$, where $B = \widehat{\mathbb{Z}}[[\widehat{Q}]]$  is the completed group algebra. Similarly to the above
 $$H_2(\widehat{G}, \widehat{\mathbb{Z}}) \simeq  H_0(\widehat{Q}, H_2(B, \widehat{\mathbb{Z}})) \simeq H_0(\widehat{Q}, B \widehat{\wedge} B)$$
Note that we have used that $H_2(B, \widehat{\mathbb{Z}}) \simeq B \widehat{\wedge}_{\widehat{\Z}} B$, that follows from  Lemma \ref{wedge2}.
 
 Note that $A \wedge A = \oplus_{q_1 < q_2, q_i \in Q} \mathbb{Z} q_1 \wedge q_2$ where $q_1 < q_2$ if $q = q_2 q_1^{-1} = x^i$ for some $i > 0$. Then $$H_0(Q, A \wedge A) \simeq \oplus_{q \in Q, q > 1}  \mathbb{Z} (1 \wedge q)$$
 
 On other hand $$ B \widehat{\wedge} B \simeq  \varprojlim_n \mathbb{Z}_n [[Q / Q^n]] \wedge  \mathbb{Z}_n [[Q / Q^n]]$$
 where $\mathbb{Z}_n = \mathbb{Z}/ n \mathbb{Z}$ and
 $$ H_0(\widehat{Q}, B \widehat{\wedge} B) \simeq \varprojlim_n H_0(Q/ Q^n, \mathbb{Z}_n [[Q / Q^n]] \wedge  \mathbb{Z}_n [[Q / Q^n]]) \simeq \varprojlim_n \oplus_{ 0 < i < n/2}  \mathbb{Z}_n (1 \wedge x^i)$$

Suppose that $u : =  \sum_{i > 0} z_i ( 1 \wedge x^i) \in Ker (H_2(G, \mathbb{Z}) \to H_2( \widehat{G}, \widehat{\mathbb{Z}})) = Ker (H_0(Q, A \wedge A) \to H_0(\widehat{Q}, B \widehat{\wedge} B) $, where all $z_i \in \mathbb{Z}$. 
Then $$\sum_{i > 0} z_i ( 1 \wedge x^i) \in  Ker (H_0(Q, A \wedge A) \to H_0(Q/ Q^n, \mathbb{Z}_n [[Q / Q^n]] \wedge  \mathbb{Z}_n [[Q / Q^n]]) $$

 Let $n>0$ be such that $z_i = 0$ for $i \geq n/2$. Hence by the above description we get that $n$ divides each $z_i$. Since this holds for infinitely many $n$ we conclude that each $z_i = 0$. Hence the map $H_2(G, \mathbb{Z}) \to H_2( \widehat{G}, \widehat{\mathbb{Z}})$ is injective and so by Theorem A, $G \wedge G$ is residually finite.

 \medskip 
 \begin{lemma} \label{wedge2} Suppose $B$ is an abelian torsion-free profinite group. Then 
 
 a) $H_2(B, \widehat{\Z}) \simeq B \widehat{\wedge}_{\widehat{\Z}}  B$,
 
 b)  $H_2(B, {\Z}/ q \Z) \simeq (B/ q B)  \widehat{\wedge}_{\widehat{\Z}} (B/ q B)$ for $q \geq 2$.
 \end{lemma}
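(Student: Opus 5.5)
We reduce everything to finite quotients and the classical computation of the homology of finitely generated free abelian groups, then pass to inverse limits. Since $B$ is an abelian torsion-free profinite group, I would first use that $B$ is an inverse limit of finite abelian quotients $B/U$. More useful is that $B$, being torsion-free abelian profinite, is a product of its Sylow pro-$p$ subgroups, each torsion-free abelian pro-$p$, hence a free $\mathbb{Z}_p$-module (for pro-$p$ abelian groups torsion-free implies free, via Pontryagin duality: the dual is divisible $p$-torsion, i.e.\ a direct sum of Prüfer groups). Thus $B \simeq \prod_p \mathbb{Z}_p^{X_p}$ as a profinite module. I will write $B = \varprojlim_n B/nB$ with each $B/nB$ a free $\mathbb{Z}/n\mathbb{Z}$-module (profinite). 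The case b) is the more basic one, and a) will follow by taking the inverse limit over $q$, as in the proof of Lemma \ref{hom-goof-injective}, using $\widehat{\Z}=\varprojlim_q \Z/q\Z$ and the fact that $\{(B/qB)\widehat{\wedge}(B/qB)\}_q$ are profinite, so $\varprojlim$ is exact and commutes with profinite homology.

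For b), homology with finite coefficients commutes with inverse limits of profinite groups: $H_2(B,\Z/q\Z)\simeq \varprojlim_U H_2(B/U,\Z/q\Z)$, where $U$ runs over the open subgroups. It suffices to take a cofinal system of $U$ with $B/U \simeq (\Z/m\Z)^k$ for $q \mid m$; this is possible since $B$ is a product of copies of $\mathbb{Z}_p$, so quotients by $m B$ and by coordinate subgroups are cofinal. For a finite group $(\Z/m\Z)^k$ with $q\mid m$, I would compute $H_2((\Z/m)^k,\Z/q)$ using Künneth:
\[
H_2(\Z/m,\Z/q)=\Z/q \quad\text{and}\quad H_1(\Z/m,\Z/q)=\Z/q,
\]
which gives $H_2((\Z/m)^k,\Z/q)\simeq (\Z/q)^k\oplus \Lambda^2(\Z/q)^k$. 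The $\Lambda^2$ part is exactly $(B/U\otimes\Z/q)\wedge(B/U\otimes \Z/q)$. The key step is to check that the extra $(\Z/q)^k$ summand (coming from $H_2$ of the cyclic factors, i.e.\ the Bockstein/Tor part) dies in the inverse limit. Indeed, the transition map $(\Z/m'\Z)\to(\Z/m\Z)$ with $m\mid m'$, $q m \mid m'$ induces zero on $H_2(-,\Z/q)\simeq \mathrm{Tor}(\Z/m,\Z/q)$, since it is multiplication by $m'/m$ on the relevant Tor term. So the inverse limit retains only $\varprojlim (B/U\otimes \Z/q)^{\wedge 2}= (B/qB)\widehat{\wedge}(B/qB)$.

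The main obstacle is making this vanishing of the Tor part in the limit rigorous and natural, i.e.\ that the Künneth decomposition is compatible with the transition maps (it is natural, though the splitting is not, so I would argue with the natural filtration $0\to \Lambda^2(M\otimes\Z/q)\to H_2(M,\Z/q)\to \mathrm{Tor}(M,\Z/q)\to 0$ for finite abelian $M$, with the left map the Pontryagin product). Then $\varprojlim$ of the Tor terms is $\mathrm{Tor}$-type with zero transition maps, hence zero, and $\varprojlim^1$ of finite groups vanishes, giving b). For a) I would either repeat the argument with $\widehat{\Z}$ coefficients via $H_2(B,\widehat\Z)=\varprojlim_{U,m} H_2(B/U,\Z/m)$, where the Tor terms again vanish by torsion-freeness of $B$, or directly take $\varprojlim_q$ of b).
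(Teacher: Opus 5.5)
Your argument is correct in substance, but it follows a different route from the paper's.

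\textbf{How the paper argues.} It uses no structure theory for $B$. It writes the universal coefficient sequence $0\to H_2(M_i,\Z)\otimes\Z/m\Z\to H_2(M_i,\Z/m\Z)\to\mathrm{Tor}_1^{\Z}(\Z/m\Z,M_i)\to 0$ for the finite quotients $M_i$ of $B$ and all $m$. It passes to the limit, using Mittag-Leffler since all groups are finite, and identifies the left-hand limit with $B\widehat{\wedge}_{\widehat{\Z}}B$ via $H_2(M_i,\Z)\simeq M_i\wedge M_i$. The Tor limit is killed in one line: for fixed $m$, $\varprojlim_i M_i[m]$ is the $m$-torsion of $B=\varprojlim_i M_i$, which is zero. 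Part a) is proved with the double limit over $(i,m)$, and b) by fixing $m=q$; this is the reverse of your order.

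\textbf{How yours differs.} Your natural filtration is the same universal coefficient sequence, using $H_2(M,\Z)\otimes\Z/q\Z\simeq\Lambda^2(M/qM)$. However, you prove the stronger fact that the Tor pro-system is pro-zero. For that you invoke $B\simeq\prod_p\Z_p^{X_p}$ and explicit coordinate quotients. This gives a concrete K\"unneth-style computation at each finite level, at the cost of a structure theorem and some care with cofinality. The paper's argument needs only $B[m]=0$.

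\textbf{A step that needs repair.} Finite quotients $B/U\simeq(\Z/m\Z)^k$ with $q\mid m$ are not cofinal in general. Take $B=\Z_2\times\Z_3^2$ and $q=6$. Any such quotient has cyclic $2$-part, which forces $k=1$, so none of them maps onto the quotient $(\Z/3\Z)^2$. There are two ways to fix this.
\begin{itemize}
\item Use quotients $\prod_p(\Z/p^{e_p}\Z)^{k_p}$ instead. Since Tor commutes with finite direct sums, raising every $e_p$ by the $p$-adic valuation of $q$ makes the transition map on $\mathrm{Tor}(-,\Z/q\Z)$ vanish coordinatewise.
\item Drop the structure theorem altogether. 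Because $B[q]=0$ and $B$ is compact, for each open $U$ there is an open $U'\subseteq U$ with $\{x\in B : qx\in U'\}\subseteq U$. This is precisely the statement that $(B/U')[q]\to(B/U)[q]$ is zero, and it is essentially the paper's torsion-freeness argument in pro-system form.
\end{itemize}
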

 
 \begin{proof} a) Let $B \simeq \varprojlim_i M_i$
 where $M_i$ are finite quotients of $B$.
 By the universal coefficients theorem we have an exact sequence
 $$0 \to H_2(M_i, \Z) \otimes (\Z/ m \Z) \to H_2(M_i, \Z/ m \Z) \to Tor^{\Z}_1( \Z/ m \Z, M_i) \to 0$$ Applying inverse limit we get an exact sequence
 $$0 \to  \varprojlim_{i,m} H_2(M_i, \Z) \otimes (\Z/ m \Z) \to \varprojlim_{i,m}   H_2(M_i, \Z/ m \Z) \to \varprojlim_{i,m}   Tor^{\Z}_1( \Z/ m \Z, M_i) $$ $$ \to  \varprojlim_{i,m} \ ^1 H_2(M_i, \Z) \otimes (\Z/ m \Z) = 0$$
 where the factor $\varprojlim^1 = 0$ since the corresponding inverse system is of finite modules so the Mittag-Leffler condition holds.
 Since $H_2(M_i, \Z) \simeq M_i \wedge_{\Z} M_i$ we get
 $$ \varprojlim_{i,m} H_2(M_i, \Z) \otimes (\Z/ m \Z) \simeq  \varprojlim_{i,m}  M_i \wedge_{\Z} M_i / m  (M_i \wedge_{\Z} M_i) \simeq $$ $$\varprojlim_{i,m} (M_i/mM_i) \wedge_{\Z} (M_i/ m M_i)  \simeq B \widehat{\wedge}_{\widehat{\Z}}  B$$
 The last isomorphism follows from $B \simeq \varprojlim_{i,m} M_i/ m M_i$.
 
 It remains to show that \begin{equation} \label{eqeqeq}  \varprojlim_{i,m}   Tor^{\Z}_1( \Z/ m \Z, M_i) = 0 \end{equation}
  Suppose that   $a = (  a_{i,m} )_{i,m} \in \varprojlim_{i,m}   Tor^{\Z}_1( \Z/ m \Z, M_i)$ and that $a$ is non-zero, so there is some $a_{i,m} \not= 0$. Note that $Tor^{\Z}_1( \Z/ m \Z, M_i) \simeq \{ v \in M_i \ | \ m v = 0 \}$. Then $Tor^{\Z}_1( \Z/ m \Z, M_i)$ is an abelian subgroup of $M_i$, hence 
  $b = (a_{i,m} )_i \in \varprojlim_{i}   Tor^{\Z}_1( \Z/ m \Z, M_i)$ is an abelian subgroup of $\varprojlim_{i}  M_i = B$. Then $m a_{i,m} = 0$, so $m b = 0$. But $B$ 
  is torsion-free, so $b = 0$ and $a_{i,m} = 0$ a contradiction.

 b) If in the previous argument we fix $m = q$ we get that $$ \varprojlim_{i}   Tor^{\Z}_1( \Z/ q \Z, M_i) = 0 $$
  This together with the universal coefficients theorem gives
 $$H_2(B, \Z/ q \Z) \simeq \varprojlim_{i}   H_2(M_i, \Z/ q \Z) \simeq   \varprojlim_{i} H_2(M_i, \Z) \otimes (\Z/ q \Z) \simeq $$ $$ \varprojlim_{i} (M_i \wedge_{\Z} M_i) \otimes (\Z/ q \Z) \simeq  \varprojlim_{i} (M_i/ q M_i \wedge_{\Z} M_i/ q M_i) \simeq (B/qB) \widehat{\wedge}_{\widehat{\Z}} (B/ q B)$$ 
 \end{proof}

  \begin{prop} \label{ab-pol1} Let $G$ be a finitely generated group, $F$ a finitely generated free group such that $G \simeq F/ R$. If
$F/[F,R]$ is abelian-by-polycyclic, then $H_2(G, \mathbb{Z})$ is finitely generated. 
\end{prop}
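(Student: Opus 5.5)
We use the Hopf formula $H_2(G,\mathbb{Z}) \simeq (R \cap [F,F])/[F,R]$ and show that this subgroup of $B = F/[F,R]$ is a finitely generated abelian group. Put $M = R/[F,R]$; it is central in $B$, and $H_2(G,\mathbb{Z}) = M \cap [B,B]$. Since $F$ is finitely generated, $B$ is finitely generated. By hypothesis $B$ has an abelian normal subgroup $C$ with $B/C$ polycyclic.

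\emph{Step 1: $M \cap [B,B]$ has finite exponent modulo a finitely generated piece.} Consider the central extension $1 \to M \to B \to G \to 1$. Since $M \cap [B,B]$ is central, it is enough to show that $(M\cap [B,B])C/C$ and $M \cap [B,B] \cap C$ are finitely generated. The first is a subgroup of the polycyclic group $B/C$, so it is finitely generated. For the second, set $N = M \cap C$. It is a central subgroup of $B$ contained in $C$.

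\emph{Step 2: bound $N \cap [B,B]$.} The idea is that $[B,B] \cap Z(B) \cap C$ is controlled by the commutators coming from $C$. The quotient $[B,B]C/C = [B/C,B/C]$ is polycyclic, so $[B,B]$ is generated modulo $[B,B]\cap C$ by finitely many elements. Here I would use the standard fact that in a finitely generated abelian-by-polycyclic group $B$, the abelian normal subgroup $C$ is a finitely generated $\mathbb{Z}(B/C)$-module. This holds because $\mathbb{Z}P$ is Noetherian for $P$ polycyclic and $C$ is finitely generated as a normal subgroup (Hall's argument: $B/C$ is finitely presented). The centre-like submodule $N$ is then a submodule on which $B/C$ acts trivially, so it is a finitely generated $\mathbb{Z}(B/C)$-module with trivial action. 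Hence $N$ is a finitely generated abelian group, and so is $N \cap [B,B]$.

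\emph{Step 3: conclude.} There is an exact sequence
\[
1 \to M \cap [B,B] \cap C \to M \cap [B,B] \to (M\cap[B,B])C/C \to 1
\]
whose outer terms are finitely generated abelian groups, by Steps 2 and 1. Hence $H_2(G,\mathbb{Z}) \simeq M \cap [B,B]$ is finitely generated.

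The main obstacle is Step 2, namely justifying that $C$ is a finitely generated $\mathbb{Z}(B/C)$-module and that submodules inherit this via Noetherianity of $\mathbb{Z}(B/C)$ (P.~Hall). It requires $B/C$ to be finitely presented, which holds since polycyclic groups are finitely presented. Once this is in place, the trivial-action submodule $N$ is automatically finitely generated as an abelian group.
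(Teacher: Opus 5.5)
Your proof is correct and follows essentially the same route as the paper. Both arguments use that the abelian normal subgroup ($M_0/[F,R]$ in the paper, your $C$) is a finitely generated module over the Noetherian ring $\mathbb{Z}(B/C)$, so its intersection with the central subgroup $R/[F,R]$ is a finitely generated abelian group, and both handle the image in the polycyclic quotient separately. The only differences are cosmetic: the paper first shows all of $R/[F,R]$ is finitely generated and then passes to the subgroup $(R\cap[F,F])/[F,R]$, whereas you intersect with $[B,B]$ before splitting; also, your ``finite exponent'' heading in Step 1 and the remark in Step 2 about generators of $[B,B]$ modulo $C$ are superfluous.
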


\begin{proof}
Let $M_0$ be a normal subgroup of $F$ such that 
$ M_0/[F,R]$ is abelian and $F/ M_0$ is polycyclic, hence $F/ M_0$ is finitely presented. Then 
$ M_0/[F,R]$ is a finitely generated $\mathbb{Z} (F/ M_0)$-module via conjugation. Since $\mathbb{Z} (F/ M_0)$ is a Noetherian ring we conclude that $ M_0/[F,R]$ is a noetherian $\mathbb{Z} (F/ M_0)$-module i.e. every submodule is finitely generated. In particular
$ R \cap M_0/[F,R]$ is finitely generated as $\mathbb{Z} (F/ M_0)$-module.

Consider the short exact sequence of groups
$$1 \to R \cap M_0/[F,R] \to R/[F,R] \to R/R \cap M_0
\to 1$$
We note that $ R/R \cap M_0 \simeq RM_0/M_0$ is a subgroup of the polycyclic group $ F/ M_0 $, hence $ R/R \cap M_0$ is polycyclic, so it is finitely generated.This combined with the above short exact sequence implies that $ R/[F,R]$ is finitely generated as $\mathbb{Z} (F/ M_0)$-module. But the action ( by conjugation) of $F$ on $ R/[F,R]$ is the trivial one, hence  $ R/[F,R]$ is finitely generated as abelian group. Hence its subgroup $R \cap[F,F]/[F,R]$ is finitely generated as an abelian group.

Finally we use the Hopf formula $H_2(G,\mathbb{Z}) =
R \cap[F,F]/[F,R]$ to deduce  $H_2(G, \mathbb{Z})$ is finitely generated.
\end{proof}

We present first an easier proof of Theorem D when $G$ is metabelian and in Section \ref{sec-final} we prove the case of centre-by-abelian group using a more complicated proof.

  \begin{theorem} \label{nu}
 Let $G$ be a finitely presented metabelian group. Then $\nu(G)$  is a finitely generated abelian-by-polycyclic group, hence it is residually finite. In particular $G \otimes G$ is residually finite.
 \end{theorem}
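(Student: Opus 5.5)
We need to show $\nu(G)$ is finitely generated abelian-by-polycyclic for $G$ finitely presented metabelian. Finite generation is clear: $\nu(G)$ is generated by $G$ and $G^{\varphi}$. The residual finiteness conclusions then follow from the Roseblade--Jategaonkar theorem, and $G\otimes G\simeq [G,G^{\varphi}]$ is a subgroup of $\nu(G)$.

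The plan uses the homomorphism $\rho_0:\nu(G)\to G\times G\times G$ with kernel $W_0=W_0(G)$. The image of $\rho_0$ is a subgroup of $G\times G\times G$, hence metabelian. Recall the two facts from the preliminaries: $\Delta(G)$ is central in $\nu(G)$, and $W_0/\Delta(G)\simeq H_2(G,\mathbb{Z})$. Moreover $W_0\le D_0=[G,G^{\varphi}]$, and by the commutator relations defining $\nu(G)$, $W_0$ is central in $\nu(G)$. This is the standard fact that $W_0$ corresponds to elements of $G\otimes G$ in the kernel of the commutator map, and these are acted on trivially. I would check this via the Brown--Loday identity $[x,y]$ acting as conjugation by the commutator. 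So $\nu(G)$ is a central extension
$$1\to W_0\to \nu(G)\to \mathrm{Im}(\rho_0)\to 1,$$
with $\mathrm{Im}(\rho_0)$ metabelian.

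The key step is to make $\mathrm{Im}(\rho_0)$ (or a suitable cover) finitely presented, so that Groves' theorem applies. Concretely, $\mathrm{Im}(\rho_0)=\{(a,b,c): a^{-1}bc^{-1}\in G'\}$ roughly, a fibre-product-type subgroup of $G^3$ containing $G'\times G'\times G'$ with finite generation. Rather than prove finite presentability of this image, I would use Lemma \ref{sexta}. Take $H=\nu(G)$ and $M=W_0$ central. Set $\Gamma=\mathrm{Im}(\rho_0)$, $A=\Gamma\cap (G')^3$, which is abelian, and $Q=\Gamma/A$, which is abelian since it embeds in $(G/G')^3$. By Lemma \ref{sexta}, it suffices that for $I=ann_{\mathbb{Z}Q}A$ the ring $\mathbb{Z}Q/(I+\sigma(I))$ is finitely generated additively.

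This is the main obstacle. I would reduce it to the Bieri--Strebel invariant. Write $G'$ as a $\mathbb{Z}Q_0$-module, $Q_0=G/G'$. $A$ is a submodule of $G'^3$ on which $Q$ acts through the three coordinate projections $Q\to Q_0$. Finite presentability of $G$ gives, via Theorem \ref{classification}, that $\Sigma^c_{G'}(Q_0)$ has no antipodal points. Using Lemma \ref{ttt}, $\Sigma^c_A(Q)$ is contained in the union of the pullbacks of $\Sigma^c_{G'}(Q_0)$ along the three projections. The idea is that $\mathbb{Z}Q/(I+\sigma(I))$ corresponds to the module $\mathbb{Z}Q/I\otimes \mathbb{Z}Q/\sigma(I)$-type object whose $\Sigma^c$ is $\Sigma^c_A\cap(-\Sigma^c_A)$. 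One must verify this is empty. This needs care because pulled-back characters from different coordinates could be antipodal. Here I would exploit the structure of $\Gamma$: the coordinates are tied by $a^{-1}bc^{-1}\in G'$, so the character of $Q$ restricted to each coordinate factor is determined compatibly.

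An emptyset $\Sigma^c$ for a finitely generated module means it is finitely generated over $\mathbb{Z}$. So $\mathbb{Z}Q/(I+\sigma(I))$ is additively finitely generated, and Lemma \ref{sexta} yields that $\nu(G)$ is abelian-by-polycyclic.

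If the character bookkeeping fails, my fallback is more direct. Let $\widetilde{F}\to\nu(G)$ be free on generators of $G$ and $G^{\varphi}$. Show $\nu(G)$ is a quotient of $\widetilde F/[\widetilde F,\widetilde R]$ for $\widetilde R$ the kernel onto a finitely presented metabelian group covering $\mathrm{Im}(\rho_0)$. Then invoke Groves as in Theorem \ref{fin-pres1}. Here the obstacle is again showing that finite presentability passes to $\mathrm{Im}(\rho_0)$, via the same $\Sigma$-criterion.
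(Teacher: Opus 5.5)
Your strategy works and takes a genuinely different route from the paper's. As written, though, the decisive step is not carried out. Centrality of $W_0$ and Lemma \ref{sexta} are quoted, and the rest is formal. The only real content is that $\Sigma^c_A(Q)\cap(-\Sigma^c_A(Q))=\emptyset$. You flag this as the main obstacle, justify it only vaguely, and hedge with a fallback. It does not fail, and it closes as follows.

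Since $[(g,g,1),(1,h,h)]=(1,[g,h],1)$, $[(g,g,1),(h,h,1)]=([g,h],[g,h],1)$ and $[(1,g,g),(1,h,h)]=(1,[g,h],[g,h])$, the group $\Gamma=\mathrm{Im}(\rho_0)$ contains $(G')^3$. Hence $A=(G')^3$. Writing $Q_0=G/G'$ additively, $Q\simeq\{(x,x+y,y)\}\simeq Q_0^2$, with coordinate maps $\pi_1=x$, $\pi_2=x+y$, $\pi_3=y$. The operative fact is not a vague compatibility of coordinates. It is that each $\pi_i$ is onto and, for $i\neq j$, $(\pi_i,\pi_j)\colon Q\to Q_0\times Q_0$ is an isomorphism; this is the abelian shadow of $\Gamma$ surjecting onto pairs.

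Now $A=M_1\oplus M_2\oplus M_3$, where $M_i=G'$ is acted on through $\pi_i$. Moreover $\Sigma^c_{M_i}(Q)=\{[\chi\circ\pi_i] : [\chi]\in\Sigma^c_{G'}(Q_0)\}$, because a character not vanishing on $\ker\pi_i$ satisfies $\pi_i(Q_\chi)=Q_0$.

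Suppose $\chi\circ\pi_i=-r\,\psi\circ\pi_j$ with $r>0$. If $i\neq j$, restrict to $\ker\pi_j$, which $\pi_i$ maps onto $Q_0$; this forces $\chi=0$, a contradiction. If $i=j$, it gives an antipodal pair in $\Sigma^c_{G'}(Q_0)$, which Theorem \ref{classification} excludes. So $\Sigma^c_A(Q)$ has no antipodal points.

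Since $\sigma$ is a ring automorphism carrying $\mathbb{Z}Q_\chi$ to $\mathbb{Z}Q_{-\chi}$, Lemma \ref{ttt} gives $\Sigma^c_{\mathbb{Z}Q/(I+\sigma(I))}(Q)\subseteq\Sigma^c_A(Q)\cap(-\Sigma^c_A(Q))=\emptyset$. Only this inclusion is needed, not a tensor-product formula. Therefore $\mathbb{Z}Q/(I+\sigma(I))$ is additively finitely generated, and Lemma \ref{sexta} applies to $1\to W_0\to\nu(G)\to\Gamma\to 1$.

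Compare this with the paper. There, $\Gamma$ is shown to be finitely presented by the Bridson--Howie--Miller--Short theorem on subdirect products that virtually surject onto pairs. Then $W_0$, being central and finitely generated as a normal subgroup, is a finitely generated abelian group. So $\nu(G)$ is finitely presented centre-by-metabelian, and Groves' theorem gives abelian-by-polycyclic.

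Your route replaces BHMS by a direct Bieri--Strebel computation. Via Theorem \ref{classification}, that computation in fact reproves that $\Gamma$ is finitely presented, so your fallback would also go through. It also replaces Groves' theorem by the elementary Lemma \ref{sexta}. It never needs $W_0$ to be finitely generated, and it shows that every finitely generated central extension of $\Gamma$ is abelian-by-polycyclic. The paper's argument is shorter given the citations, and it additionally yields that $\nu(G)$ is finitely presented.
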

 
 \begin{proof} Note that $$\nu(G)/ W_0(G) \simeq  \rho_0(G) \subseteq G \times G \times G$$ is metabelian. The group  $W_0(G)$ was defined in preliminary section \ref{intro-non-abelian}, it  is a normal subgroup of $\nu(G)$  and was denoted by $\mu(G)$ in \cite{Norai3}. By \cite{Norai3} $$W_0(G)/ \Delta(G) \simeq W(G)/ R(G)\hbox{ and }W_0(G) \hbox{ is central in }\nu(G)$$
 
  By \cite{BHMS} if a subdirect product of finitely presented groups virtually surjects on pairs then it is finitely presented itself. This applies for $\rho_0(G)$ considered as a subgroup of $G \times G \times G$ and we conclude that $\rho_0(G)$ is a finitey presented metabelian group i.e. $W_0(G)$ is finitely generated as a normal subgroup of $\nu(G)$. Since $W_0(G)$ is central we conclude that $W_0(G)$ is a finitely generated abelian group, hence is finitely presented. Combining with $\nu(G) / W_0(G)$ is finitely presented we conclude that $\nu(G)$ is finitely presented. By the Groves result a finitely presented centre-by-metabelian group is abelian-by-polycyclic \cite{Groves}, in particular $\nu(G)$ is abelian-by-polycyclic.
  Hence $\nu(G)$ is residually finite and its subgroup $[G, G^{\varphi}] \simeq G \otimes G$ is residually finite.
 
 \end{proof}
 
 \begin{theorem}[=Theorem D] Let $G$ be a finitely presented metabelian group. Then $\X(G)$ is residually finite.
 \end{theorem}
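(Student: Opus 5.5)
We prove that $\X(G)$ is residually finite by mimicking the proof of Theorem \ref{nu}, with $W(G)$ in place of $W_0(G)$.

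First, the homomorphism $\rho : \X(G) \to G \times G \times G$ has kernel $W(G)$, and its image equals $\rho_0(\nu(G))$. Since $G$ is finitely presented metabelian, this image is a finitely presented metabelian group. Finite presentability follows from \cite{BHMS}, exactly as in the proof of Theorem \ref{nu}: $\rho(\X(G))$ is a subdirect product of three copies of $G$ that virtually surjects on pairs. Moreover $\X(G)$ is finitely presented by \cite{B-K}. Hence $W(G)$ is finitely generated as a normal subgroup of $\X(G)$.

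Second, we use the structure of $W(G)$. Sidki \cite{Said} proved that $W(G)$ is abelian and that $[W(G), L(G)] = 1$. Thus $\X(G)$ acts on $W(G)$ through $\X(G)/L(G) \simeq G$. Since $\X(G)/W(G)$ is metabelian and $W(G)$ is abelian, $\X(G)$ is abelian-by-metabelian, and $W(G)$ is a finitely generated $\mathbb{Z}G$-module.

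The plan is then to find a normal subgroup $C$ of $\X(G)$ that is abelian with $\X(G)/C$ polycyclic, or failing that, to split the problem into residually finite pieces. We have $W(G)/R(G) \simeq H_2(G,\mathbb{Z})$, and $R(G)=[G,L(G),G^{\varphi}]$. It also seems relevant that $\X(G)/R(G) \simeq \nu(G)/\Delta(G)$, which is a quotient of $\nu(G)$ by a finitely generated central subgroup. By Theorem \ref{nu}, $\nu(G)$ is finitely generated abelian-by-polycyclic, so $\X(G)/R(G)$ is finitely generated abelian-by-polycyclic and hence residually finite.

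To handle $R(G)$, the most promising route is the following. Since $\X(G)$ is finitely presented and abelian-by-metabelian with the relevant module structure controlled by $G$, we attempt to show that $\X(G)$ is in fact abelian-by-polycyclic. This uses the $\Sigma$-invariant argument of Lemma \ref{sexta}.

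Let $A = G'$ and $Q = G/G'$. The key steps are:
\begin{enumerate}
\item Consider $D(G) \cap$ (preimage of $A \times A \times A$), and show that modulo a suitable abelian normal subgroup the remaining module is annihilated by an ideal containing $I + \sigma(I)$, where $I = ann_{\mathbb{Z}Q}(A)$.
\item Show that $\mathbb{Z}Q/(I+\sigma(I))$ is additively finitely generated. This follows from Theorem \ref{classification} together with Lemma \ref{ttt}. Finite presentability gives that $\Sigma^c_A(Q)$ has no antipodal points, and then $\Sigma^c_{\mathbb{Z}Q/(I+\sigma(I))}(Q) = \Sigma^c_A(Q) \cap (-\Sigma^c_A(Q)) = \emptyset$. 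Hence $\mathbb{Z}Q/(I+\sigma(I))$ is a finitely generated module over every half-group ring, and therefore finitely generated as an abelian group.
\end{enumerate}

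The main obstacle is step 1: identifying the correct abelian normal subgroup and checking the symmetry $\lambda \mapsto \sigma(\lambda)$ on the annihilator. This symmetry comes from commutator identities in $\X(G)$ involving $[g, g^{\varphi}] = 1$, in the same way that $[a_1^h, a_2] = [a_1, a_2^{h^{-1}}]$ arose in Lemma \ref{sexta}.

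Once $\X(G)$ is shown to be finitely generated abelian-by-polycyclic, the Jategaonkar--Roseblade theorem gives that $\X(G)$ is residually finite.
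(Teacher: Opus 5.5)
Your proposal has a genuine gap exactly where you locate ``the main obstacle'': Step 1 is never carried out, and the mechanism you intend to use cannot work as stated. In Lemma \ref{sexta} and Theorem \ref{ab-pol}, the identity $[a_1^h,a_2]=[a_1,a_2^{h^{-1}}]$ forces $\sigma(J)=J$ for the annihilator $J$. That identity holds only because the commutators $[a_1,a_2]$ lie in a \emph{central} subgroup. In $\X(G)$ the relevant class-two section is $P=\rho^{-1}(G'\times G'\times G')$, whose commutators land in $W(G)$. But $W(G)$ is not central: $\X(G)$ acts on it through $\X(G)/L(G)D(G)\simeq G/G'$, and nothing forces this action to be trivial. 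So you only get $[a^h,b]=[a,b^{h^{-1}}]^h$. The commutator pairing is then equivariant only for the diagonal action, and there is no reason for the annihilator to be $\sigma$-invariant.

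For the same reason $W(G)$ is a priori only a finitely generated $\mathbb{Z}[G/G']$-module, not a finitely generated abelian group. So the reduction used in Theorem \ref{nu} (a central subgroup that is finitely generated as a normal subgroup is finitely generated) is unavailable. Your Step 2 is fine once the claimed equality is weakened to $\Sigma^c_{\mathbb{Z}Q/(I+\sigma(I))}(Q)\subseteq\Sigma^c_A(Q)\cap(-\Sigma^c_A(Q))$. However, it concerns the module $A=G'$, and nothing in the proposal connects the annihilator of a section of $\X(G)$ to $I+\sigma(I)$. Likewise, residual finiteness of $\X(G)/R(G)\simeq\nu(G)/\Delta(G)$ gives no control over $R(G)$. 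Note also that the paper never proves that $\X(G)$ is abelian-by-polycyclic. You are aiming at a stronger statement whose key step is missing.

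The missing idea is to get rid of the non-central $W(G)$ by passing to quotients. You use only $[W(G),L(G)]=1$, so you see the action factor through $G$, whose group ring is not Noetherian. The paper uses $[L(G),D(G)]=1$ together with $\X(G)/L(G)D(G)$ abelian. Hence $\X(G)'$ centralizes $W(G)$, and $W(G)$ is a finitely generated module over the Noetherian ring $\mathbb{Z}[\X(G)/\X(G)']$.

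Combine this with finite presentability of $\X(G)$, of $\X(G)/W(G)$, and of every quotient of a finitely presented metabelian group. This puts $\X(G)$ in a quotient-closed class $\mathcal{H}$ of groups with max-$n$. By Hall's criterion it then suffices to show that monolithic members of $\mathcal{H}$ are finite. For a monolithic $H\in\mathcal{H}$ the argument runs as follows:
\begin{enumerate}
\item $W$ is a monolithic finitely generated module over an abelian group, hence finite by Hall's Theorem 1*.
\item Then $S=\ker(H\to\mathrm{Aut}(W))$ has finite index and $W\le Z(S)$.
\item So $S$ is finitely presented centre-by-metabelian, and Groves' theorem makes it abelian-by-polycyclic.
\item Thus $H$ is residually finite, hence finite.
\end{enumerate}
This route sidesteps precisely the non-centrality that blocks your Step 1.
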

 
 \begin{proof}  Recall that by section \ref{intro-non-abelian} $$W(G) = L(G) \cap D(G)$$ is a normal subgroup of $\X(G)$, $$[L(G), D(G)] = 1$$ by \cite[Prop. 4.1.7]{Said}, hence $W(G)$ commutes with $L(G) D(G)$. Furthermore $$\X(G) / L(G) D(G) \simeq G/ G' \hbox{ is abelian,}$$ hence $\X(G)' \subseteq L(G) D(G)$ and the commutator subgroup $\X(G)'$ acts trivially ( via conjugation) on $W(G)$.
 Note that $W(G)$ is an abelian normal subgroup of $\X(G)$ and $W(G) = Ker (\rho)$ where $$\rho : \X(G) \to G \times G \times G$$ is defined in section \ref{intro-non-abelian}. 
 Then
 $$\X(G)/ W(G) \simeq Im (\rho) \leq G \times G \times G,$$  hence $\X(G)/ W(G)$  is metabelian and finitely presented since $Im (\rho)$ is a subdirect product in $G \times G \times G$ that maps surjectively on pairs, hence by \cite[Thm. A]{BHMS} such subdirect products are finitely presented. The fact that $\X(G)$ is finitely presented is proved in \cite{B-K}.
 
 Consider the class $\mathcal{H}$ of all finitely presented groups $H$ with an abelian normal subgroup $W$ such that $H/ W$ is finitely presented metabelian and $H'$-acts trivially on $W$ (via conjugation). Thus $W$ is finitely generated as a $\mathbb{Z} H/ H'$-module with $H/ H'$ action induced by conjugation. Since $\mathbb{Z} H/ H'$ is a Noetherian ring and any quotient of a finitely presented metabelian group is finitely presented \cite{B-S}, we deduce that $H$ has max-n i.e. every normal subgroup of $H$ is finitely generated as a normal subgroup.   In particular
  the class of groups $\mathcal{H}$ is quotient closed. 
  
  A group is said to be monolithic if the intersection of all non-trivial normal subgroups is non-trivial.  By Lemma 1 from \cite{Hall2} a group in a class of groups, where the class of groups  is quotient closed,  is residually finite if and only if every monolithic group in this class is finite. 
 
 We  aim to show that every monolithic  group $H$ in the class $\mathcal{H}$ is finite. Let $W$ be the subgroup of such monolithic group $H$ given by the definition of the class $\mathcal{H}$.
 Note that $W$ is a monolithic $\mathbb{Z} H$-module i.e. the intersection of all non-trivial $\mathbb{Z} H$-submodules is non-trivial. Note that the $H$-action on $W$ via conjugation factors through action of the abelian group $Q = H / H'$. By Theorem 1* from the Hall paper \cite{Hall2} any monolithic finitely generated abelian-by-nilpotent group is finite, in particular  the finitely generated metabelian monolithic group $W \rtimes (Q / C_Q(W))$  is finite. Then $W$ is finite and there is a normal subgroup  $S = Ker(H \to Aut(W))$ of finite index in $H$ such that $W \subseteq Z(S)$ i.e. $W$ is central in $S$.   Note that  since $S$ has finite index in $H$, $S$ is a finitely presented centre-by-metabelian group, hence by the Groves result \cite{Groves} $S$ is abelian-by-polycyclic. Thus $H$ is abelian-by-polycyclic-by-finite. But abelian-by-polycyclic-by-finite groups are residually finite, hence a monolithic abelian-by-polycyclic-by-finite group is finite, in particular $H$ is finite.
 
 \end{proof}
 
 The idea of the above proof can be used to show directly that for a finitely presented metabelian group $G$ we have that $\nu(G)$ is residually finite. But in Theorem \ref{nu} we proved  more, by showing that $\nu(G)$ is abelian-by-polycyclic.
 
In several proofs we used that under some conditions centre-by-metabelian groups are abelian-by-polycyclic and hence residually finite. In \cite{Groves2} were constructed finitely generated centre-by-metabelian groups that are not residually finite. This supports the need of extra conditions in the previous statements. The examples in \cite{Groves2} have infinite cyclic central subgroup with metabelian quotient that is not finitely presented.

 Recall that a $p$-group is one where every non-trivial element has order a power of $p$.

 \begin{cor}
 Let $G$ be a finitely presented metabelian group.
 
 a) Suppose that  $G'$, $G/ G'$ and $H_2(G, \mathbb{Z})$ are $\mathbb{Z}$-torsion-free. Then for almost all primes $p$ $\nu(G)$ and $G \otimes G$ have a normal subgroup of finite index that is residually a  finite $p$-group.
 
 b)  Suppose that  $G'$ and $H_2(G, \mathbb{Z})$ are $\mathbb{Z}$-torsion-free. Then   for almost all primes $p$ $G \wedge G$ has a normal subgroup of finite index that is residually a finite $p$-group.
 
 c) Suppose that  $G'$ and $H_2(G, \mathbb{Z})$ are $p$-groups. Then   for almost all primes $p$ $G \wedge G$  is virtually residually $p$-finite.
 \end{cor}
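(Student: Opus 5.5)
The plan is to apply Segal's Theorem \ref{segal} to abelian-by-polycyclic groups built from $G$, with the abelian normal subgroup chosen torsion-free or a $p$-group. The basic objects are $B = F/[F,R]$, where $G \simeq F/R$ with $F$ finitely generated free, and $\nu(G)$. By the proof of Theorem \ref{fin-pres1}, $B$ is a finitely generated abelian-by-polycyclic group, by Groves' theorem, and $G \wedge G \simeq [B,B]$ by Theorem \ref{wedge}. By Theorem \ref{nu}, $\nu(G)$ is finitely generated abelian-by-polycyclic and $G\otimes G \simeq [G,G^{\varphi}]$ is a subgroup of it. Since a subgroup of a group that is virtually residually $p$-finite is again virtually residually $p$-finite (intersect the finite-index normal subgroup with it), it suffices to handle $\nu(G)$ in a) and $B$ in b), c).

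The main obstacle is that Segal's theorem needs the abelian normal subgroup itself to be torsion-free or a $p$-group. The abelian subgroup produced by Groves' theorem need not have that property. So I will construct a suitable abelian normal subgroup by hand.

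For b), take $A_0 = [B,B] \cap R/[F,R] \simeq H_2(G,\Z)$. It is central, hence abelian and normal, and it is torsion-free by hypothesis. The quotient $B/A_0$ is an extension of $G'$ by $G/G'$, i.e. essentially $G$, and is metabelian, not polycyclic. So I enlarge instead: let $A_1$ be the preimage in $B$ of $G'$, i.e. $A_1 = [F,F]R/[F,R]$. Then $B/A_1 \simeq G/G'$ is polycyclic, and $A_1$ is an extension of the central subgroup $R/[F,R]$ by the abelian group $G'$, hence nilpotent of class $\le 2$. To get an abelian subgroup, pass to the abelian-by-polycyclic structure of $B$: there is $C \lhd B$ abelian with $B/C$ polycyclic. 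I then replace $C$ by $C\cap[B,B]$, which is still abelian with $B/(C\cap[B,B])$ polycyclic-by-abelian. Working inside $[B,B]=G\wedge G$, the torsion of $C\cap[B,B]$ maps injectively into the torsion of $G'$ modulo the torsion of $H_2$, which are both trivial. Hence $C\cap[B,B]$ is torsion-free, and $[B,B]/(C\cap [B,B])$ is polycyclic, being a subgroup of $B/C$. Theorem \ref{segal}(ii) then gives the result for almost all primes $p$.

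For a), use the same argument in $\nu(G)$. Let $C$ be the abelian characteristic subgroup with polycyclic quotient. Its torsion embeds, via the filtration $\Delta(G) \le W_0 \le G\otimes G$, into the torsion of $\Delta(G)$, of $W_0/\Delta \simeq H_2(G,\Z)$, and of $G'$ and of $\rho_0$-images, which is where the torsion-freeness of $G/G'$ enters. For $\Delta(G)$, a quotient of $\Gamma(G/G')$ with $G/G'$ free abelian, I would use Theorem \ref{2tor}'s remark that $\Delta(G) \simeq \Gamma(G/G')$ is free abelian. Thus $C\cap (G\otimes G)$ is torsion-free, and Theorem \ref{segal}(ii) applies to $G \otimes G$ and, similarly, to $\nu(G)$. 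For c), the same intersection $C\cap[B,B]$ is an extension of $p$-groups, hence a $p$-group, and Theorem \ref{segal}(i) gives the claim for that $p$.
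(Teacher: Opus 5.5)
\textbf{Part a) has a genuine gap.} In $\nu(G)$ you intersect $C$ with $G\otimes G=[G,G^{\varphi}]$, and that is the wrong subgroup.

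\emph{You cannot apply Segal's theorem to $G\otimes G$ itself.} It surjects onto $G'$, so in general it is not finitely generated. For instance, $G=\mathbb{Z}[1/2]\rtimes\mathbb{Z}$ (generator acting by multiplication by $2$) is finitely presented metabelian with $G'\simeq\mathbb{Z}[1/2]$, $G/G'\simeq\mathbb{Z}$ and $H_2(G,\mathbb{Z})=0$. It satisfies every hypothesis of a).

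\emph{Nor can you apply it to $\nu(G)$ with $C\cap(G\otimes G)$.} Since $\nu(G)/[G,G^{\varphi}]\simeq G\times G$, the quotient $\nu(G)/(C\cap(G\otimes G))$ maps onto $G\times G$, which is not polycyclic in general. Your filtration $\Delta(G)\le W_0\le G\otimes G$ only controls the torsion of $C\cap(G\otimes G)$. The clause about ``$\rho_0$-images'', which would have to handle the rest of $C$, is never made into an argument.

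\emph{The paper's fix is to intersect with $\nu(G)'$.} We have $W_0\le[G,G^{\varphi}]\le\nu(G)'$ and $\nu(G)'/W_0\simeq\rho_0(\nu(G))'=G'\times G'\times G'$. Hence $\nu(G)'$ is torsion-free, because its factors $\Delta(G)$, $W_0/\Delta(G)\simeq H_2(G,\mathbb{Z})$ and $G'^3$ are. Then $C\cap\nu(G)'$ is a torsion-free abelian normal subgroup, and $\nu(G)/(C\cap\nu(G)')$ embeds in $\nu(G)/C\times\nu(G)^{ab}$, so it is polycyclic.

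\emph{Alternatively, make your $\rho_0$ remark precise.} $G$ is torsion-free because $G'$ and $G/G'$ are. So $\nu(G)/W_0\simeq\rho_0(\nu(G))\le G\times G\times G$ is torsion-free, hence all of $\nu(G)$ is torsion-free and $C$ itself works. In the paper, torsion-freeness of $G/G'$ is only used for $\Delta(G)$; your argument via $\Delta(G)\simeq\Gamma(G/G')$ free abelian is fine there. Either way, Segal's theorem is applied to $\nu(G)$ and then passed to $G\otimes G$ by intersecting, as in your opening reduction.

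\textbf{Parts b) and c) are correct}, and they use a different vehicle from the paper. You work in $B=F/[F,R]$, where $[B,B]\simeq G\wedge G$ is an extension of $G'$ by $H_2(G,\mathbb{Z})$. The paper instead works in $\nu(G)/\Delta(G)\supseteq G\wedge G$, using the extension $1\to H_2(G,\mathbb{Z})\to\nu(G)'/\Delta(G)\to G'\times G'\times G'\to 1$. Your route is slightly more economical: it needs only that $B$ is abelian-by-polycyclic (from the proof of Theorem \ref{fin-pres1}) and nothing about $\nu(G)$.

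You should, however, state the application precisely. Segal's theorem must be applied to $B$, not to $[B,B]$: in the example above $G\wedge G\simeq\mathbb{Z}[1/2]$ is not finitely generated. What it requires is that $B/(C\cap[B,B])$ be polycyclic. This holds because $B/(C\cap[B,B])$ embeds in $B/C\times B/[B,B]$; say this directly rather than calling it polycyclic-by-abelian.
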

 
 \begin{proof} We show that in all cases we can apply Theorem \ref{segal}.
 
a)  By Theorem \ref{nu} $\nu(G)$ is abelian-by-polycyclic i.e. there is an abelian normal subgroup $A$ of $\nu(G)$ such that $\nu(G)/ A$ is polycyclic. By the proof of Theorem \ref{nu} 
  $A$ is a subgroup of the group $$\rho_0^{-1} (G'\times G'\times G') = \nu(G)'$$ Note that
 $ W_0 = W_0(G)  \subseteq \nu(G)'$ and $$\nu(G)'/ W_0(G) \simeq G'\times G'\times G' \hbox{ is  }\mathbb{Z} \hbox{-torsion-free}$$ Furthermore $\Delta(G) \subseteq W_0$ and $W_0/ \Delta(G) \simeq H_2(G, \mathbb{Z})$ is  $\mathbb{Z}$-torsion-free. Finally since $G/ G'$ has no 2-torsion, $$\Delta(G) 
 \simeq \Delta(G/ G') = Ker (G/ G'\otimes G/ G' \to G/ G'\wedge G/ G')$$ is  $\mathbb{Z}$-torsion-free since $G/ G'\otimes G/ G'$ is $\mathbb{Z}$-torsion-free. Hence $ \nu(G)'$ has no torsion elements, hence $A$ is $\mathbb{Z}$-torsion-free. Then we can apply Theorem \ref{segal} ii).

b)   Note that  $G \wedge G$ is a subgroup of $H = \nu(G)/ \Delta(G)$. Recall that by Theorem \ref{nu} $\nu(G)$ is abelian-by-polycyclic, hence $\nu(G)/ \Delta(G)$ is abelian-by-polycyclic i.e. there is  an abelian normal subgroup $A_0$ of $H$ such that $H/ A_0$ is polycyclic. Furthermore $A_0$ is a subgroup of $\nu(G)'/ \Delta(G)$ and 
 the exact sequence  $1 \to W_0/ \Delta(G) \to \nu(G)'/ \Delta(G) \to \nu(G)'/ W_0 \to 1$ can be rewritten as a short exact sequence of groups  \begin{equation} \label{ses} 1 \to H_2(G, \mathbb{Z}) \to \nu(G)'/ \Delta(G) \to G'\times G'\times G'\to 1\end{equation} 
Thus $A_0$ is  $\mathbb{Z}$-torsion-free.  Then we can apply Theorem \ref{segal} ii).

c) We can define $A_0$ as in b).  The short exact sequence (\ref{ses}) implies that $A_0$ is a $p$-group.  Then we can apply Theorem \ref{segal} i).

 \end{proof}
 
  \section{ Proof of Theorem C} \label{sec-final}

  \begin{theorem}  \label{ab-pol} Let $G$ be a finitely presented group with a normal subgroup $A$ such that

1) $G/A$ is abelian;

2) $A'$ is  finitely generated abelian;

3) $[[A', G], G] = 1.$

Then $G$ is abelian-by-polycyclic and in particular, it is residually finite.

Furthermore if $A$ is characteristic in $G$ then there is a characteristic  abelian  subgroup $C$ of $G$ such that $G/ C$ is polycyclic.
  \end{theorem}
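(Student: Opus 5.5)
We will follow the strategy of Lemma \ref{sexta}: find an abelian normal subgroup $C$ for which the module $A/C$ (or a suitable section) has small annihilator, using the Bieri--Strebel invariant to control that annihilator.

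\textbf{Step 1: set up the module.} Put $Q = G/A$, a finitely generated abelian group, and let $C = C_A(A')$. Since $A'$ is finitely generated abelian, $\operatorname{Aut}(A')$ acts on it through $GL_n(\mathbb{Z})$-type data. Condition 3) says $[A',G]$ is central in $G$, so $G$ acts on $A'$ unipotently, modulo a finite torsion part. We pass to a finite index subgroup where this action is as simple as possible.

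The natural candidate for the abelian normal subgroup is $C = Z(A_0)$ as in Lemma \ref{sexta}, or rather the centre of $A$ intersected with a suitable finite index subgroup. We want $V = A/Z(A)$ to be finitely generated as an abelian group. For $a_1,a_2 \in A$ and $g\in G$ we have the commutator pairing $A/Z(A) \times A/Z(A) \to A'$. This pairing is nondegenerate in the first variable, and up to the central correction coming from 3) it is $G$-equivariant. Hence $\operatorname{ann}(V)$ contains $\sigma$-twisted versions of annihilators. More precisely, after passing to a finite index subgroup $G_0$ acting trivially on $A'/[A',G]$ and on $[A',G]$, we get $[a_1^h,a_2] \equiv [a_1,a_2^{h^{-1}}]$ up to elements of $[A',G]$. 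Iterating twice kills the error, so if $\lambda\in J=\operatorname{ann}(V)$ then a bounded power expression $\sigma(\lambda)^2$ or $(\sigma(\lambda))(\text{augmentation})$ lies in $J$. In short, $\sqrt{J}$ is $\sigma$-invariant up to radical.

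\textbf{Step 2: Bieri--Strebel.} We will show $\mathbb{Z}Q/\sqrt{J}$ has Krull dimension $0$ over $\mathbb{Z}$, i.e.\ it is finitely generated as an abelian group. The argument goes through $G/A'$, which is metabelian, finitely generated and finitely presented; the last point holds because $G$ is finitely presented and $A'$ is finitely generated normal. By Theorem \ref{classification}, $\Sigma^c_{A/A'}(Q)$ has no antipodal points. Since $V$ is a quotient of $A/A'$, Lemma \ref{ttt} gives $\Sigma^c_V \subseteq \Sigma^c_{A/A'}$, so $\Sigma^c_V$ has no antipodal points either. On the other hand $\Sigma_V = \Sigma_{\mathbb{Z}Q/J}$, and $\sigma$-invariance of $\sqrt J$ makes $\Sigma^c_V$ symmetric under $[\chi]\mapsto[-\chi]$; note that $\Sigma$ depends only on the radical. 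Therefore $\Sigma^c_V=\emptyset$, meaning $V$ is finitely generated over $\mathbb{Z}Q_\chi$ for every $\chi$. By the Bieri--Strebel characterisation, $V$ is then finitely generated over $\mathbb{Z}$ modulo torsion issues; in fact $\Sigma^c_V=\emptyset$ is equivalent to $V$ being finitely generated as an abelian group.

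\textbf{Step 3: conclude.} $V=A/Z(A)$ is finitely generated abelian and $Q$ is finitely generated abelian, so $G/Z(A)$ is polycyclic and $Z(A)$ is abelian. This gives the abelian-by-polycyclic structure, and residual finiteness follows by Jategaonkar--Roseblade. If we passed to a finite index $G_0$, we take $Z(A\cap G_0)$; polycyclic-by-finite quotients are still handled, or we avoid $G_0$ by working with radicals. If $A$ is characteristic, then $Z(A)$ is characteristic, which gives the final claim.

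\textbf{Main obstacle.} The hardest part is Step 1: proving that $\sqrt{J}$ is $\sigma$-invariant when $A'$ is not central but only satisfies $[[A',G],G]=1$. Here $G$ acts on $A'$ nontrivially, so the identity $[a_1^h,a_2]=[a_1,a_2^{h^{-1}}]$ holds only modulo $[A',G]$. We will first try filtering $A'\supseteq[A',G]\supseteq 1$, using the pairings into each layer, and showing $\lambda\in J\Rightarrow \sigma(\lambda)^k\in J$ for some $k$.
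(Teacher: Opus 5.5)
Your Step 1, which you yourself flag as the main obstacle and leave open, is where the proof breaks, and as set up it cannot be repaired. The hypotheses only give $[A',A]\le [A',G]=:M$ with $M$ central in $G$. So $A$ is nilpotent of class at most $3$, and $V=A/Z(A)$ is in general not abelian. For instance, take $G=A$ free nilpotent of class $3$ on two generators: all hypotheses hold and $A/Z(A)$ is the Heisenberg group.

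This has several consequences:
\begin{itemize}
\item Conjugation by $A$ acts nontrivially on $V$, so the $G$-action does not factor through $Q$. Hence $V$ is not a $\mathbb{Z}Q$-module and $J=\operatorname{ann}(V)$ is not defined.
\item The commutator map $V\times V\to A'$ is not bilinear, because $A'$ is not central in $A$.
\item $V$ is not a quotient of $A/A'$ (that would need $A'\le Z(A)$), so the inclusion $\Sigma^c_V\subseteq\Sigma^c_{A/A'}$ in Step 2 is unfounded.
\item The claimed radical $\sigma$-invariance via ``$\sigma(\lambda)^k\in J$'' is never actually derived.
\end{itemize}
No finite-index passage is needed either: $G$ already centralises $A'/M$ and $M$.

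The missing idea is to cut along $A'\supseteq M\supseteq 1$. Each commutator pairing should then be a bilinear map between abelian sections, with values in a group that $G$ centralises. Adjointness then holds on the nose and $\sigma(J)=J$ exactly, with no radicals needed.

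The paper takes $B/M=Z(A/M)$, so that $A'\le B$ and $[A,B]\le M$, and sets $C=Z(B)$. Then $V=B/C$ is a $\mathbb{Z}Q$-module because $[A,B]\le M\le C$. Since $[A,B]\le M$ is central in $G$, we get exactly $[a^g,b]=[a,b^{g^{-1}}]^g=[a,b^{g^{-1}}]$, which gives $\sigma(I)=I$ for $I=\operatorname{ann}_{\mathbb{Z}Q}(V)$.

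For the $\Sigma$-step, $V$ is not a quotient of $A/A'$ either. However, it has the finitely generated submodule $A'C/C$, with quotient $B/A'C\le A/A'C$. So Lemma~\ref{ttt} gives $\Sigma^c_V\subseteq\Sigma^c_{A/A'}$, which has no antipodal points since $G/A'$ is finitely presented. Symmetry then forces $\Sigma^c_V=\emptyset$, i.e.\ $B/C$ is finitely generated.

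The paper gets $A/B$ finitely generated separately, from the proof of Groves' proposition applied to the finitely presented centre-by-metabelian group $G/M$. Your ``pairing into each layer'' idea, carried out correctly, would also handle this layer. The pairing $A/B\times A/B\to A'/M$ is bilinear (as $A'/M$ is central in $A/M$), nondegenerate by the definition of $B$, and exactly adjoint because $G$ centralises $A'/M$. Moreover $A/B$ is a quotient of $A/A'$, so the same symmetry argument applies.

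Then $G/C$ is polycyclic, since $G/A$, $A/B$ and $B/C$ are finitely generated abelian. Finally, $M$, $B$ and $C$ are characteristic when $A$ is.
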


 \begin{proof}
  Set $M = [A', G]$ and let $B$ be the subgroup of $A$ that contains $M$ and $B/ M$ is the centre of $A/ M$. Thus $[A, B] \subseteq M$ and $A' \leq B$. Note that $M$ is normal in $G$ and since $B/M$ is characteristic in $A/M$, $B/ M$ is normal in $G/ M$, hence $B$ is normal in $G$. By assumption    $[M, G] = 1$ and since $M$ is a subgroup of the finitely generated abelian group $A'$ we have that $M$ is finitely generated abelian. Then $G/ M$ is a finitely presented, centre-by-metabelian group.   By \cite{B-S} if a finitely presented group does not have non-cyclic free subgroups then every metabelian quotient is finitely presented. In our case this implies that the central part of $G/ M$ is a finitely generated abelian group. This combined with every finitely presented metabelian group has max-$n$ implies $G/ M$ has max-n and every quotient of $G/ M$ is finitely presented. By the proof of \cite[Proposition]{Groves} $A/B$ is finitely generated.

  Let $C$ be the centre $Z(B)$ of $B$,  hence $M \leq C$.  
  We will show that $G/ C$ is polycyclic and by construction $C$ is normal abelian subgroup of $G$, so $G$ is abelian-by-polycyclic.  We observe that if $A$ is a characteristic subgroup of $G$ then $M$ is characteristic in $G$, hence $B$ is characteristic in $G$ and finally $C = Z(B)$ is characteristic in $G$.

  Set $V = B/ C$  and consider it as $G$-module via conjugation ( on the right). Since $[B,A] \leq M \leq C$, the $G$-action (via conjugation)  on $V$ factors through a $Q$-action, where $Q = G/ A$ is finitely generated abelian.  Since $G/ M$ has max-n, $V$ is a finitely generated $Q$-module.

  For $a \in A, b \in B, g \in G$, using that $[[A,B],G] \subseteq [M,G] = 1$ we have
  $$[a^g,b] = [a, b^{g^{-1}}]^g =  [a, b^{g^{-1}}]$$ In particular applying this for $a,b \in B$ and working modulo $C$  we have that if $\lambda \in I = ann_{\mathbb{Z} Q}(V)$ then $\sigma(\lambda) \in I$, where $$\sigma : \mathbb{Z} Q \to \mathbb{Z} Q$$  is the $\mathbb{Z}$-linear map  that sends $q \in Q$ to $q^{-1}$. 
  
\iffalse{  Since $A/B$ is finitey generated, we have that there are finitely many elements $x_1, \ldots, x_m $ in $A$ such that $A/ C = V \langle x_1, \ldots, x_m \rangle$. Then since $V$ is central in $A/ C$ we have $$(A/C)'= \langle [x_i, x_j] \ | \ 1 \leq i < j \leq m \rangle\hbox{ is  finitely generated}$$ Hence the group $K = G/ A'C$ is  finitely presented. By construction $K$ is metabelian. }\fi
  
  Recall that $A'$ is finitely generated. Then since $G$ is finitely presented we have that $G/ A'$ is finitely presented metabelian. Every quotient of a finitely presented metabelian group is finitely presented. Thus $$K = G/ A'C \hbox{ is  a finitely presented, metabelian group.}$$

  Set $V_0 = B/ A' C$ i.e. the image of $V$ in $K$. Our aim is to show that $V$ is finitely generated. But since  $(A/C)'$ is finitely generated, $V$ is finitely generated  precisely when  $V_0$ is finitely generated. Set $D = A/ A' C$ and note that $D$ is abelian and $V_0 \subseteq D \subseteq K$. Since $A/ B$ is finitely generated we have that $D/ V_0$ is finitely generated. We view $D$ as a finitely generated right $Q \simeq K/ D$-module via conjugation and aim to show that $D$ is finitely generated as an abelian group.
  
  Since $K$ is finitely presented by the classification of the finitely presented metabelian groups, see Theorem \ref{classification}, we have that $$\Sigma_D^c(Q) = S(Q) \setminus \Sigma_D(C)$$ has no antipodal points. On other hand the short exact sequence of $\mathbb{Z} Q$-modules
  $$0 \to V_0 \to D \to D/ V_0 \to 0$$ implies
$$\Sigma_D^c(Q) = \Sigma_{V_0}^c(Q) \cup \Sigma_{D/ V_0}^c(Q).$$ Since  $D/ V_0$ is finitely generated as an abelian group $\Sigma_{D/ V_0}^c(Q)  = \emptyset$, hence $\Sigma_D^c(Q) = \Sigma_{V_0}^c(Q)$  has no antipodal points.
By the short exact sequence of $\mathbb{Z} Q$-modules $$ 0 \to V_1 \to V \to V_0 \to 0$$ where $V_1 = A'C/ C$, we have 
$$\Sigma_{V_0}^c(Q) \cup \Sigma_{V_1}^c(Q) = \Sigma_V^c(Q)$$ Since $A'$ is finitely generated, $V_1$ is finitely generated as an abelian group, so $ \Sigma_{V_1}^c(Q) = \emptyset$. Thus $$\Sigma_{V_0}^c(Q) = \Sigma_V^c(Q)$$  has no antipodal points.

By Theorem \ref{ttt} $$\Sigma_V^c(Q) = \Sigma_{\mathbb{Z} Q/ I}^c(Q).$$ Recall that $\sigma (I) = I$, where $I = ann_{\mathbb{Z} Q}(V)$.  This implies that  $$ \Sigma_{\mathbb{Z} Q/ I}^c(Q) = - \Sigma_{\mathbb{Z} Q/ I}^c(Q)$$ Since  $ \Sigma_{\mathbb{Z} Q/ I}^c(Q) $ has no antipodal points we conclude that $$\Sigma_V^c(Q)^c =  \Sigma_{\mathbb{Z} Q/ I}^c(Q)  = \emptyset.$$ This is equivalent with $V$ is finitely generated as an abelian group ( see \cite[Thm A,i)]{B-S}). 

Finally since $G/A, A/ B$ and $B/ C$ are finitely generated abelian groups, the group $G/ C$ is polycyclic.
 \end{proof} 

As a corollary of Theorem \ref{ab-pol} we obtain the following result.

\begin{cor} [=Theorem C] \label{ThmDThm} 
Let $G_0$ be a finitely presented centre-by-metabelian group. Then $\nu(G_0)$ is  a finitely generated, abelian-by-polycyclic group  with the abelian normal subgroup being characteristic. In particular $\nu(G_0)$, $G_0 \otimes G_0$ and $G_0 \wedge G_0$ are residually finite. 
\end{cor}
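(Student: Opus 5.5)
We apply Theorem \ref{ab-pol} to $G = \nu(G_0)$ with a suitable characteristic normal subgroup $A$; this requires checking its four hypotheses (finite presentability and conditions 1)--3)).

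\textbf{Finite presentability.} Since $G_0$ is finitely presented, \cite{B-K} shows $\X(G_0)$ is finitely presented. We have $\nu(G_0)/\Delta(G_0) \simeq \X(G_0)/R(G_0)$, so we need the kernels to be finitely generated as normal subgroups. Alternatively, since $\nu(G_0)$ is a quotient of $\X(G_0)$ by the normal closure of the relations $[g_1,g_2^\varphi]^{g_3}=[g_1,g_2^\varphi]^{g_3^\varphi}=[g_1^{g_3},(g_2^{g_3})^\varphi]$, we would argue as in Theorem \ref{nu}. The image $\rho_0(\nu(G_0))$ is a subdirect product of $G_0\times G_0\times G_0$ that surjects onto pairs, so by \cite{BHMS} it is finitely presented. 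The kernel $W_0(G_0)$ is central in $\nu(G_0)$ \cite{Norai3}. It is an extension of $\Delta(G_0)$ by $H_2(G_0,\Z)$: here $\Delta(G_0)$ is finitely generated (because $G_0/G_0'$ is), and $H_2(G_0,\Z)$ is finitely generated by Proposition \ref{ab-pol1}, since $F/[F,R]$ is finitely presented centre-by-metabelian and hence abelian-by-polycyclic by \cite{Groves}. Thus $W_0$ is a finitely generated central subgroup and $\nu(G_0)$ is finitely presented.

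\textbf{Choice of $A$ and conditions 1)--3).} Let $Z$ be central in $G_0$ with $G_0/Z$ metabelian, and put $A = \rho_0^{-1}(G_0'\times G_0'\times G_0') = \nu(G_0)'$, which is characteristic.

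Condition 1): $\nu(G_0)/A$ is abelian, since $A = \nu(G_0)'$.

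Condition 2): $A'$ maps into $G_0''\times G_0''\times G_0''$, with $G_0''\le Z$. Since $G_0$ is finitely presented centre-by-metabelian, the central part can be chosen finitely generated by \cite{B-S}, as in the proof of Theorem \ref{ab-pol}; so $G_0''$ is finitely generated central in $G_0$. Hence $A'$ is an extension of a subgroup of $W_0$ by a subgroup of $(G_0'')^3$, both finitely generated abelian. We still need $A'$ to be abelian. Because $\nu(G_0)/W_0$ is centre-by-metabelian and $W_0$ is central, $\nu(G_0)$ is nilpotent-of-class-$\le 2$-by-metabelian. We would check that $A'$ is abelian using $[A',A']\subseteq [Z\text{-part},\dots]$: modulo the central $W_0$, $A'$ is central, so $[A',A']\le W_0$ and is generated by commutators of central-modulo-$W_0$ elements. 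This is the \emph{main obstacle}. If it fails, replace $A$ by a smaller characteristic subgroup, such as the preimage of a suitable term, so that $A'$ becomes central.

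Condition 3): $[A',G]$ lies in $W_0$ because $A'$ is central modulo $W_0$, and then $[[A',G],G]\le[W_0,G]=1$.

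\textbf{Conclusion.} Theorem \ref{ab-pol} then gives a characteristic abelian $C$ with $\nu(G_0)/C$ polycyclic, and $\nu(G_0)$ is finitely generated. Residual finiteness of $\nu(G_0)$ follows from \cite{J}, \cite{R}. Its subgroup $[G_0,G_0^\varphi]\simeq G_0\otimes G_0$ is therefore residually finite. Finally, $G_0\wedge G_0$ is a subgroup of $\nu(G_0)/\Delta(G_0)$; this quotient is still finitely generated abelian-by-polycyclic, so it is residually finite, and hence so is $G_0\wedge G_0$.
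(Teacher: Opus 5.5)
Your choice $A=\nu(G_0)'$ works, and conditions 1), 3) and the finite generation of $A'$ are fine, apart from the $H_2$ point in the next paragraph. The gap is in condition 2) of Theorem \ref{ab-pol}, which also requires $A'=\nu(G_0)''$ to be \emph{abelian}. You leave this open as the ``main obstacle'', and the reasoning you sketch does not settle it. Knowing that $A'$ is central modulo the central subgroup $W_0$ gives only $[A',A']\le W_0$, and elements that are central modulo a central subgroup need not commute: every element of the Heisenberg group is central modulo its centre.

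The missing idea is that $X=\nu(G_0)''$ is itself the commutator subgroup of $N=\nu(G_0)'$. Since $\rho_0(X)\le (G_0'')^3\le Z(G_0)^3$, you have $[X,\nu(G_0)]\le W_0$, so $[X,\nu(G_0),\nu(G_0)]=1$. Hence $[[N,X],N]=1=[[X,N],N]$, and the three subgroup lemma gives $[X,X]=[[N,N],X]=1$.

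With this added, your argument is a genuinely different route from the paper's, though the paper ends up with the same $A$ (there $[\nu(G_0),\nu(G_0)]W_0(G_0)B=\nu(G_0)'$, because $W_0(G_0)B\le\nu(G_0)'$).
\begin{itemize}
\item The paper gets $A'$ abelian from $A'\le W_0(G_0)B$, using its Claim about the normal closure $B$ of $K\cup K^{\varphi}$. That Claim rests on Rocco's commutator identities.
\item The paper gets finite generation of $W_0(G_0)B$ from finite presentability of $\nu(G_0)$, citing \cite{KochSidki}.
\item You bypass the Claim entirely and obtain finite presentability from \cite{BHMS} together with $W_0$ being finitely generated and central.
\end{itemize}

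Your justification that $H_2(G_0,\Z)$ is finitely generated is wrong as written. When $G_0=F/R$ is centre-by-metabelian but not metabelian, $F/[F,R]$ is in general only (second centre)-by-metabelian, not centre-by-metabelian. For example, $G_0=F_3/\gamma_5(F_3)$ gives $F/[F,R]=F_3/\gamma_6(F_3)$, whose second derived subgroup is not central. So neither \cite{Groves} nor Proposition \ref{ab-pol1} applies as you state.

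The conclusion is still true for an elementary reason. $R/[F,R]$ is central in $F/[F,R]$ and is generated by the images of the finitely many normal generators of $R$. So it is finitely generated, and so is its subgroup $H_2(G_0,\Z)\simeq (R\cap F')/[F,R]$. This is what makes $W_0$ finitely generated, and your finite presentability argument for $\nu(G_0)$ then goes through.
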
 

\begin{proof}
By assumption there is a normal subgroup $K$ of $G_0$ such that $G_0/ K$ is metabelian, $K \subseteq G_0'$ and $K \subseteq Z(G_0)$. By \cite{B-S} every metabelian quotient of a finitely presented soluble group is finitely presented. In particular $K$ is finitely generated as a normal subgroup of $G_0$ but as $K$ is central, we get that $K$ is finitely generated.

\medskip
{\bf Claim} {\it Let $B$ be the normal closure  of $K \cup K^{\varphi}$ in $\nu(G_0)$. Then

a) $[K, G_0^{\varphi}], [K^{\varphi}, G_0] \subseteq Z(\nu(G_0))$,

b) $B = \langle K, K^{\varphi}, [K, G_0^{\varphi}], [K^{\varphi}, G_0] \rangle$,

c) $B$ is abelian,

d) $[B,\nu(G_0), \nu(G_0)] = 1$.
} 

\medskip
Proof of the Claim. 

a) We will prove that $[K, G_0^{\varphi}] \subseteq Z(\nu(G_0))$, then  $[K^{\varphi}, G_0] = [K, G_0^{\varphi}]^{\varphi} \subseteq  Z(\nu(G_0))^{\varphi} =  Z(\nu(G_0))$.

Consider $k \in K, g_1, g_2 \in G_0$. We have  to show that (using left-normed commutators) $[k,
g_1^{\varphi}, g_2^{\varphi}] = 1 = [k,
g_1^{\varphi}, g_2]$. Note that by \cite[Lemma 2.1]{Norai} $$[k,
g_1^{\varphi}, g_2^{\varphi}] =  [k,
g_1^{\varphi}, g_2] = [k, g_1, g_2^{\varphi}] = [[k, g_1], g_2^{\varphi}] = 1$$
where the last equality follows from $[k, g_1] = 1$.

b) Note that
$$B = \langle K, K^{\varphi} \rangle N$$
where $N$ is the normal subgroup of $\nu(G_0)$ generated by $ [K, G_0^{\varphi}], [K^{\varphi}, G_0]$. By a) $N$ is generated as a group by  $[K, G_0^{\varphi}], [K^{\varphi}, G_0]$.

c) By a) and b) it remains only to show that $[K, K^{\varphi}] = 1$.
By \cite[Lemma 2.2]{Norai} if $x,a,b, \in G_0$ such that $[x,a] = 1 = [x,b]$ then $[x, [a,b]^{\varphi}] = 1$ in $\nu(G_0)$. This implies that $[K, [G_0,G_0]^{\varphi} ] = 1$. By construction $K \subseteq [G_0, G_0]$, hence $$[K, K^{\varphi}] = 1.$$

d) follows from a), b) together with $\nu(G_0) = \langle G_0, G_0^{\varphi} \rangle$ and $[K, G_0] = 1 = [K^{\varphi}, G_0^{\varphi}]$. This completes the proof of the Claim.

\medskip

As in the proof of Theorem \ref{nu} consider   $W_0(G_0)$ central in $\nu(G_0)$. 
 Hence $ W_0(G_0) B $ is abelian.

We have that $\nu(G_0)/ W_0(G_0) \simeq Im (\rho_0)$ a subgroup of $G_0 \times G_0 \times G_0$ and $$\nu(G_0) / B \simeq \nu (G_0/ K)$$ hence $$\nu(G_0)/ W_0(G_0) B \simeq \nu (G_0/ K) / W_0(G_0/ K)$$ is a subgroup of $(G_0/ K) \times (G_0/ K) \times (G_0/ K)$. Thus  $\nu(G_0)/ W_0(G_0) B$ is a metabelian group. 

By \cite{KochSidki} $G_0$ finitely presented implies that $\nu(G_0)$ is finitely presented. Since $G_0$ is soluble then $\nu(G_0)$ is soluble. Then  $\nu(G_0)/ W_0(G_0) B$ is a metabelian quotient of the finitely presented  soluble group $\nu(G_0)$, so $\nu(G_0)/ W_0(G_0) B$ is finitely presented, hence $W_0(G_0) B$ is finitely generated as a normal subgroup of $\nu(G_0)$.

Recall that  $W_0(G_0) B$ is abelian. By Claim d) $[B, \nu(G_0), \nu(G_0)] = 1$ that together with $W_0(G_0)$ central in $\nu(G_0)$ implies $$[W_0(G_0)B, \nu(G_0), \nu(G_0)] = 1$$ so $\nu(G_0)$  acts nilpotently  on $W_0(G_0) B$. Since $W_0(G_0) B$ is finitely generated as a normal subgroup of $\nu(G_0)$ we conclude that $$W_0(G_0) B \hbox{ is  a finitely generated abelian group}.$$ Then we can apply Theorem \ref{ab-pol} for $G = \nu(G_0)$ and  set 
$A = [\nu(G_0), \nu(G_0)] W_0(G_0) B$. We claim  that $\nu(G_0)$ is abelian-by-polycyclic. Indeed since $\nu(G_0)/ W_0(G_0) B$ is metabelian $$A' =[A, A] \subseteq  W_0(G_0) B,$$ hence $A'$ is   finitely generated abelian and $$[A', \nu(G_0), \nu(G_0)] \subseteq [W_0(G_0) B, \nu(G_0), \nu(G_0)] = 1$$
 Note that if $K = G_0''$ then $B$ is a characteristic subgroup of $\nu(G_0)$. By construction $W_0(G_0)$ is characteristic in $\nu(G_0)$, hence $A$ is a characteristic subgroup of $\nu(G_0)$. Then by Theorem \ref{ab-pol} $\nu(G_0)$ has an abelian characteristic subgroup $C$ such that $\nu(G_0)/ C$ is polycyclic.

Finally as $G_0 \otimes G_0$ is a subgroup of $\nu(G_0)$  we conclude that $\nu(G_0)$ and  $G_0 \otimes G_0$ are residually finite. Furthermore $G_0 \wedge G_0$ is a subgroup of $\nu(G_0)/ \Delta(G_0)$ and   since $\nu(G_0)/ \Delta(G_0)$ is a finitely generated abelian-by-polycyclic group, hence residually finite, we conclude that $G_0 \wedge G_0$ is residually finite.
\end{proof}

\section{Residual finiteness of the $q$-exterior square} \label{q-tensor123} 

Let $q \geq 1$ be an integer and $\mathbb{F}_q = \Z/ q \Z$. In  \cite{C-R}  the non-abelian  $q$-exterior square $G \wedge^q G$ and  the non-abelian q-tensor product $G \otimes^q G$  are defined, see \cite{Ellis2} too,
and by \cite{E-RF}  there is a  central extension \begin{equation} \label{ellis1}  1 \to H_2(G, \mathbb{F}_q) \to G \wedge^q G \to [G,G]G^q \to 1 \end{equation}
When $G$ is $q$-perfect i.e. $G = [G,G]G^q$ the above is the universal q-central extension studied in \cite{Br}.  There is a central extension $$1 \to \Delta^q(G) \to G \otimes^q G \to G \wedge^q G \to 1,$$ see \cite[Remark 2.2]{D-Rocco}.  We refer the reader to \cite{Irene-Norai} for more properties of $G \wedge^q G$.

Note that for $G \simeq F/ R$ where $F$ is a free group we have a central extension
$$ 1 \to \frac{R \cap [F,F] F^q }{[F,R] R^q} \to \frac{[F,F] F^q}{[F,R] R^q} \to \frac{[F,F]F^q}{R \cap [F,F] F^q} \to 1$$
with  $H_2(G, \mathbb{F}_q) \simeq  \frac{R \cap [F,F] F^q }{[F,R] R^q}$ and $\frac{[F,F] F^q}{R \cap [F,F] F^q} \simeq [G,G]G^q$. This implies $$G \wedge^q G \simeq \frac{[F, F] F^q}{[F,R] R^q}, $$  see \cite{McD}, \cite[Lemma 2.5]{D-Rocco}. Note that when $q = 1$ we have $H_2(G, \mathbb{F}_q) = 0$ and 
$G \wedge^q G \simeq [G,G]G^q = G$. Then we can assume without loss of generality that $q \geq 2$.

In this section we show that the methods introduced in the previous sections can be adapted to work for $G \wedge^q
G$. 

 For subsets $A,B, D$ of a profinite group $F_0$ write $\overline{[A, B]D^q}$ for the profinite subgroup of $F_0$ generated (topologically)  by $\{ [f_1, f_2], f_3^q \ | \ f_1 \in A, f_2 \in B, f_3 \in D \}$.

 \begin{lemma}
 Let $C$ be a profinite group and $F_0$ be a profinite group with a  normal  profinite subgroup $R_0$ such that $C \simeq F_0 / R_0$ and $0 = H_2(F_0, \mathbb{F}_q)$. Let $q \geq 2$ be an integer. Then there is an isomorphism
 $$H_2(C, \mathbb{F}_q) \simeq R_0 \cap \overline{[F_0, F_0]F_0^q}/ \overline{[R_0, F_0] R_0^q}. $$
 \end{lemma}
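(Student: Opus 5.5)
We argue exactly as in the proof of the profinite Hopf formula at the beginning of Section~\ref{sectA}, now with coefficients $\mathbb{F}_q$ in place of $\widehat{\mathbb{Z}}$. We start from the short exact sequence of profinite groups $1 \to R_0 \to F_0 \to C \to 1$. Its Lyndon--Hochschild--Serre spectral sequence $E^2_{i,j} = H_i(C, H_j(R_0, \mathbb{F}_q))$ converges to $H_{i+j}(F_0, \mathbb{F}_q)$, and it gives the five-term exact sequence
$$ H_2(F_0, \mathbb{F}_q) \to H_2(C, \mathbb{F}_q) \to H_0(C, H_1(R_0, \mathbb{F}_q)) \to H_1(F_0, \mathbb{F}_q) \to H_1(C, \mathbb{F}_q) \to 0 .$$
By hypothesis the first term vanishes. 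Hence $H_2(C, \mathbb{F}_q)$ is identified with the kernel of the map $H_0(C, H_1(R_0, \mathbb{F}_q)) \to H_1(F_0, \mathbb{F}_q)$.

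The next step is to identify these two low-dimensional groups concretely. For a profinite group $P$ and trivial coefficients $\mathbb{F}_q$ we have $H_1(P, \mathbb{F}_q) \simeq P/\overline{[P,P]P^q}$, the maximal abelian quotient of exponent dividing $q$. Applying this to $P = R_0$ gives $H_1(R_0, \mathbb{F}_q) \simeq R_0/\overline{[R_0,R_0]R_0^q}$. Taking $C$-coinvariants, where $C$ acts through conjugation by $F_0$, further kills the commutators $[R_0,F_0]$. This gives
$$H_0(C, H_1(R_0,\mathbb{F}_q)) \simeq R_0/\overline{[R_0,F_0]R_0^q}.$$
Here we use $[R_0,R_0] \subseteq [R_0,F_0]$, and we use that $\overline{[R_0,F_0]R_0^q}$ is a closed normal subgroup of $F_0$. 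Normality holds because both generating sets are conjugation invariant. Likewise $H_1(F_0, \mathbb{F}_q) \simeq F_0/\overline{[F_0,F_0]F_0^q}$.

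The map between these groups is induced by the inclusion $R_0 \hookrightarrow F_0$. Its kernel is therefore $\bigl(R_0 \cap \overline{[F_0,F_0]F_0^q}\bigr)/\overline{[R_0,F_0]R_0^q}$, which is the claimed isomorphism.

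The main obstacle is justifying the identifications in the profinite setting. The first is $H_1(P,\mathbb{F}_q) \simeq P/\overline{[P,P]P^q}$ for profinite $P$. We would obtain it by writing $H_1(P,\mathbb{F}_q) = \varprojlim_U H_1(P/U, \mathbb{F}_q)$ over open normal $U$, using the discrete formula for finite groups, and noting that the inverse limit of the finite quotients $P/U[P,P]P^q$ is $P/\overline{[P,P]P^q}$. The second is that profinite coinvariants $H_0(C, M)$ of the profinite module $M = R_0/\overline{[R_0,R_0]R_0^q}$ equal $M$ modulo the closure of the span of the elements $m^c m^{-1}$, which is $\overline{[R_0,F_0]R_0^q}$ in this case. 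In each of these steps closures must be taken, and every quotient involved is a compact quotient by a closed subgroup. This is routine, and it mirrors the corresponding step in the $\widehat{\mathbb{Z}}$ version of the lemma.
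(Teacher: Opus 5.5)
Your proposal is correct and follows the paper's proof. Both arguments take the five-term exact sequence from the Lyndon--Hochschild--Serre spectral sequence of $1 \to R_0 \to F_0 \to C \to 1$, use $H_2(F_0,\mathbb{F}_q)=0$ and the identifications $H_0(C,H_1(R_0,\mathbb{F}_q)) \simeq R_0/\overline{[R_0,F_0]R_0^q}$ and $H_1(F_0,\mathbb{F}_q)\simeq F_0/\overline{[F_0,F_0]F_0^q}$, and then read off the kernel; the only addition is your justification of these identifications via finite quotients and closures, which the paper leaves implicit.
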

 
 \begin{proof}
 Consider the short exact sequence of profinite groups $1 \to R_0 \to F_0 \to C \to 1$
 and the associated Lyndon-Hoschild-Serre spectral sequence $E_{i,j}^2 = H_i(C, H_j(R_0, \mathbb{F}_q))$ that converges to the profinite group $H_{i+ j}(F_0, \mathbb{F}_q)$. This spectral sequence gives rise to a 5-term exact sequence in homology
 $$
 0 = H_2(F_0, \mathbb{F}_q) \to H_2(C, \mathbb{F}_q) \to R_0/\overline{[R_0, F_0] R_0^q} \to H_1(F_0, \mathbb{F}_q) \to  H_1(C, \mathbb{F}_q) \to 1$$
 Since $H_1(F_0, \mathbb{F}_q)  \simeq F_0/ \overline{[F_0, F_0]F_0^q}$ we obtain that
 $$ H_2(C,  \mathbb{F}_q) \simeq Ker( R_0/\overline{[R_0, F_0]R_0^q} \to H_1(F_0, \mathbb{F}_q)) \simeq$$ $$
 Ker ( R_0/ \overline{[R_0, F_0] R_0^q} \to  F_0/ \overline{[F_0, F_0]F_0^q}) = R_0 \cap \overline{[F_0, F_0]F_0^q}/ \overline{[R_0, F_0]R_0^q}.$$
 \end{proof}

Note that for the Whitehead quadratic functor we have that $\Gamma(A \oplus B) = \Gamma (A) \oplus \Gamma(B) \oplus A \otimes_{\Z} B$ where $A$ and $B$ are abelian groups.

\begin{lemma} \cite{Ellis2} \label{Wh-Wh}  Let $\Gamma$ be the Whitehead quadratic functor. Then there is an exact sequence
$$ \Gamma(G / [G,G] G^q) \to G \otimes^q G \to  G \wedge^q G \to 1$$
\end{lemma}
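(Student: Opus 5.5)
The approach is to work inside the group $\nu^q(G)$ of \cite{Ellis2}, \cite{Bu-Ro}, in the same way that $\Delta(G)$ was handled inside $\nu(G)$ in Section \ref{intro-non-abelian}. Recall that $G \otimes^q G$ is generated by the elements $g \otimes h$ together with the symbols $\hat{k}$ for $k \in G$. The quotient $G \wedge^q G$ is obtained by factoring out the normal subgroup $\Delta^q(G)$ generated by the elements $g \otimes g$. So it suffices to show two things: that $\Delta^q(G)$ is a homomorphic image of $\Gamma(G/[G,G]G^q)$, and that the composite of this map with the inclusion gives the stated exact sequence. Exactness at $G \wedge^q G$ holds by definition, since $\Delta^q(G)$ is the kernel of $G \otimes^q G \to G \wedge^q G$.

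The first step is to define a map $\gamma(\bar g) \mapsto g \otimes g$ on the universal quadratic generators of $\Gamma(\bar G)$, where $\bar G = G/[G,G]G^q$. To check that this is well defined and a homomorphism, I will verify three things using the defining relations of $G \otimes^q G$:
\begin{enumerate}
\item[(i)] $g \otimes g$ depends only on the image of $g$ in $\bar G$;
\item[(ii)] $\gamma(-x) = \gamma(x)$;
\item[(iii)] the function $(x,y) \mapsto \gamma(x+y) - \gamma(x) - \gamma(y)$ is bilinear, being given by $x \otimes y + y \otimes x$.
\end{enumerate}
Condition (iii) follows from the expansion $(gh)\otimes(gh) = (g\otimes g)^h (g \otimes h)^h (h\otimes g)(h \otimes h)$ combined with the fact that the elements $g\otimes g$ and $g\otimes h \cdot h \otimes g$ are central in $\nu^q(G)$. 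Centrality is proved exactly as in \cite[p.68-69]{Norai} for $\nu(G)$, using the relations $[g_1, g_2^{\varphi}]^{g_3} = [g_1, g_2^{\varphi}]^{g_3^\varphi}$.

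The main obstacle is (i), specifically invariance under multiplying $g$ by a $q$-th power $k^q$. Invariance under $[G,G]$ follows, as in the classical case \cite{B-L1}, from the commutator identities in $\nu^q(G)$. For $q$-th powers one must use the relations in \cite{Ellis2} linking $\hat{k}$ with $k \otimes k$: namely $\hat{k}$ and $[k, k^{\varphi}]$ commute and $k^{q}\otimes g$ is expressible via $\hat{k}$. I will first try to show directly that $k^q \otimes h \cdot h \otimes k^q$ is trivial modulo the image of the cross terms, and derive that $\gamma(q\bar k)$ coincides with $q^2 \gamma(\bar k)$ and vanishes as required. If this direct route becomes messy, the fallback is to quote \cite{Ellis2}, where this exact sequence appears.

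Once well-definedness is established, the universal property of $\Gamma$ yields the homomorphism. Its image is exactly $\Delta^q(G)$, because that subgroup is abelian and central and is generated by the elements $g\otimes g$. This completes the exact sequence.
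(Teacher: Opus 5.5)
The paper does not prove this lemma: it is quoted from \cite{Ellis2}, so your fallback is exactly what the paper does. As a self-contained argument, though, your direct route has a gap at precisely the step you flag.

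The reduction itself is fine. Set $f(g)=g\otimes g$ and $\beta(g,c)=(g\otimes c)(c\otimes g)$. Then $f(gc)=f(g)f(c)\beta(g,c)$, with $\beta$ central and bimultiplicative (transported from $G\otimes G$ along $\nu(G)\to\nu^q(G)$). So well-definedness on $\bar G=G/[G,G]G^q$ comes down to showing $f(k^q)=1$ and $\beta(g,k)^q=1$ in $G\otimes^q G$.

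The relations you propose to use do not give this. That $\hat k$ commutes with $[k,k^{\varphi}]$ forces nothing to be trivial. ``$k^q\otimes g$ is expressible via $\hat k$'' is left unspecified. Being ``trivial modulo the image of the cross terms'' is not enough, because these elements must be trivial outright. Also, $\gamma(q\bar k)=q^2\gamma(\bar k)=0$ holds automatically in $\Gamma(\bar G)$. The real content is that $f(k^q)=(k\otimes k)^{q^2}$ vanishes in $G\otimes^q G$. That needs a relation bounding the order of $k\otimes k$, and no relation of $G\otimes G$ supplies one: in $\mathbb{Z}\otimes\mathbb{Z}$ the element $x\otimes x$ has infinite order.

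The missing ingredient is the defining relation $\widehat{[g,h]}=[g,h^{\varphi}]^q$ of $\nu^q(G)$ (see \cite{Bu-Ro}), i.e.\ $(g\otimes h)^q=\widehat{[g,h]}$, together with the product rule
\[
\widehat{kk_1}=\hat k\Bigl(\prod_{i=1}^{q-1}[k,(k_1^{-i})^{\varphi}]^{k^{q-1-i}}\Bigr)\hat{k_1}.
\]
Taking $k=k_1=1$ gives $\hat 1=1$. Hence $(k\otimes k)^q=\widehat{[k,k]}=1$, and so $f(k^q)=(k\otimes k)^{q^2}=1$.

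For the cross term: since $\beta(g,k)$ is central, $g\otimes k$ and $k\otimes g$ commute. Therefore $\beta(g,k)^q=\widehat{c}\,\widehat{c^{-1}}$ with $c=[g,k]$. The product rule with $k=c$, $k_1=c^{-1}$ gives $\widehat{c}\,\widehat{c^{-1}}=1$. This is because each factor is a conjugate of $c\otimes c^{i}=(c\otimes c)^{i}$, which is trivial for $c\in G'$ by the classical case. With these two facts inserted, your argument goes through.

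One smaller point: Rocco's centrality argument for $\nu(G)$ does not cover the generators $\hat k$. To see that $\Delta^q(G)$ is central, hence normal, in $G\otimes^q G$, you also need that $\hat k$ acts on $[G,G^{\varphi}]$ as $k^{q}$ does (a defining relation in \cite{C-R}).
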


Recall that a group $G$ is $q$-perfect if $G = [G,G]G^q$. Then by Lemma \ref{Wh-Wh} 
 if $G$ is $q$-perfect then $G \otimes^q G \simeq  G \wedge^q G$.

 \begin{lemma} \label{res-finite25}
 Let $q \geq 2$ be an integer. Suppose that $G$ is a discrete group such that $G$ is residually finite and the canonical map $H_2(G, \mathbb{F}_q) \to H_2(\widehat{G}, \mathbb{F}_q)$ is injective. Then $G \wedge^q G$ is residually finite.
 \end{lemma}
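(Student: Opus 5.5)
We mimic the first proof of Lemma \ref{res-finite}, now using the central extension (\ref{ellis1}) in place of the sequence $1 \to H_2(G,\Z) \to G\wedge G \to [G,G] \to 1$. For every normal subgroup $U$ of finite index in $G$ the quotient $G/U$ is finite, and (\ref{ellis1}) gives exact sequences
$$1 \to H_2(G/U, \mathbb{F}_q) \to (G/U) \wedge^q (G/U) \to [G/U,G/U](G/U)^q \to 1.$$
These sequences are natural in $U$. Since $G/U$ is finite, $(G/U)\wedge^q (G/U)$ is finite: it is an extension of the finite group $H_2(G/U,\mathbb{F}_q)$ by a subgroup of $G/U$.

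We define $\widehat{G}\,\widehat{\wedge}^q\,\widehat{G} = \varprojlim_U (G/U)\wedge^q(G/U)$, which is a profinite group. All the $H_2(G/U,\mathbb{F}_q)$ are finite, so the Mittag-Leffler condition gives $\varprojlim^1 = 0$. Passing to the inverse limit therefore yields an exact sequence
$$1 \to \varprojlim_U H_2(G/U,\mathbb{F}_q) \to \widehat{G}\,\widehat{\wedge}^q\,\widehat{G} \to \varprojlim_U [G/U,G/U](G/U)^q \to 1.$$
Here $\varprojlim_U H_2(G/U,\mathbb{F}_q) \simeq H_2(\widehat{G},\mathbb{F}_q)$, by the description of profinite homology recalled in the preliminaries with the finite coefficient module $\mathbb{F}_q$. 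The right-hand term is the closed subgroup $\overline{[\widehat{G},\widehat{G}]\widehat{G}^q}$ of $\widehat{G}$.

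By naturality of (\ref{ellis1}), the maps $G \to G/U$ induce a commutative diagram with exact rows. The top row is $1 \to H_2(G,\mathbb{F}_q) \to G\wedge^q G \to [G,G]G^q \to 1$ and the bottom row is the limit sequence above. The vertical maps are $\alpha$, $\beta$ and $\gamma$. The map $\alpha : H_2(G,\mathbb{F}_q)\to H_2(\widehat{G},\mathbb{F}_q)$ is the canonical one, so it is injective by hypothesis. The map $\gamma$ is the restriction of $G \to \widehat{G}$ to $[G,G]G^q$, so it is injective because $G$ is residually finite. A diagram chase then shows that $\beta$ is injective: if $\beta(x)=1$, then $\gamma$ kills the image of $x$, so $x$ lies in $H_2(G,\mathbb{F}_q)$, and then $\alpha(x)=1$ forces $x=1$. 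Hence $G\wedge^q G$ embeds in a profinite group and is residually finite.

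The main point needing care is the naturality of (\ref{ellis1}) and the identification of $\alpha$ with the canonical map. For both I would use the free presentation description $G\wedge^q G \simeq [F,F]F^q/[F,R]R^q$ and $H_2(G,\mathbb{F}_q)\simeq (R\cap[F,F]F^q)/[F,R]R^q$. Each quotient $G/U = F/RU_0$, with $U_0$ the preimage of $U$ in $F$, has a compatible presentation, and the induced maps on these Hopf-type quotients are evidently compatible. As an alternative for finitely generated $G$, one can pass to $\widehat{F}$ as in the second proof of Lemma \ref{res-finite}, using the profinite $q$-Hopf formula of the preceding lemma. The right column is then identified with the image of $[G,G]G^q$ in $\widehat{G}$, and $\beta$ embeds $G\wedge^q G$ into $\overline{[\widehat{F},\widehat{F}]\widehat{F}^q}/\overline{[\overline{R},\widehat{F}]\overline{R}^q}$.
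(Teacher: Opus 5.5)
Your proposal is correct and follows essentially the same route as the paper. Both arguments pass the natural central extensions $1 \to H_2(G/U,\mathbb{F}_q) \to (G/U)\wedge^q(G/U) \to [G/U,G/U](G/U)^q \to 1$ to the inverse limit using Mittag-Leffler, identify $\varprojlim_U H_2(G/U,\mathbb{F}_q)$ with $H_2(\widehat{G},\mathbb{F}_q)$, and use the diagram chase with $\alpha$ and $\gamma$ injective to embed $G\wedge^q G$ into the profinite group $\widehat{G}\,\widehat{\wedge}^q\,\widehat{G}$. Your free-presentation remarks on naturality and on identifying $\alpha$ with the canonical map fill in a detail the paper leaves implicit.
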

 
 \begin{proof} The proof is a modification of the proof of Lemma \ref{res-finite}.
  Consider the short exact sequence
  $$ 1 \to H_2(G/U, \mathbb{F}_q) \to (G / U) \wedge^q (G/ U) \to [G/U, G/U](G/U)^q \to 1$$ 
  Then take
   inverse limit over all $G/ U$ finite and obtain an exact sequence
  $$ 1 \to \varprojlim_U  H_2(G/U, \mathbb{F}_q) \to \varprojlim_U (G / U) \wedge^q (G/ U) \to \varprojlim_U [G/U, G/U](G/U)^q $$
 Note that  $$\widehat{G} \widehat{\wedge}^q \widehat{G} : =  \varprojlim_U (G / U) \wedge^q (G/ U), $$   where the inverse limit is over all finite quotients $G/ U$, is a profinite group since $(G / U) \wedge^q (G/ U)$ is finite by (\ref{ellis1})    and $  \varprojlim_U [G/U, G/U](G/U)^q$ is the  closed subgroup $\overline{[\widehat{G}, \widehat{G}] \widehat{G}^q}$ of $ \widehat{G}$. 
  
  The canonical map $G \to \widehat{G}$ induces a map $\beta : G \wedge^q G \to \widehat{G} \widehat{\wedge}^q \widehat{G}$  that induces  a  commutative diagram with exact rows
 
\[ 
\begin{tikzcd}
1\arrow{r}{}&  H_2(G, \mathbb{F}_q)  \arrow{r}
\arrow{d}{\alpha}
 &   G \wedge^q G \arrow{r}{} \arrow{d}{\beta}& \ [G,G]G^q \arrow{r}{}\arrow{d}{\gamma}& 1\\%
1\arrow{r}{}& \varprojlim_U H_2( G/U, \mathbb{F}_q) \arrow{r}{}  &  \widehat{G}  \ \widehat{\wedge}^q \  \widehat{ G} \arrow{r}{} & \ \overline{[\widehat{G},\widehat{G}]\widehat{G}^q}\arrow{r}{}   & 1 \\%
\end{tikzcd}
\]
 where the exactness of the second row follows from the finiteness of all groups  $H_2( G/U, \mathbb{F}_q)$ and the Mittag-Leffler condition that implies that $ \varprojlim_U^1 H_2( G/U, \mathbb{F}_q)  = 0$.

  Note that $\varprojlim_U  H_2(G/U, \mathbb{F}_q) \simeq H_2(\widehat{G}, \mathbb{F}_q)$. By assumption the map $H_2(G, \mathbb{F}_q) \to  H_2(\widehat{G}, \mathbb{F}_q)$ is injective, hence $\alpha$ is injective.
  
  Since $G$ is residually finite the canonical map $G \to \widehat{G}$ is injective, hence $\gamma$ is injective. This implies, via the commutative diagram, that $\beta$ is injective but since $ \widehat{G} \widehat{\wedge}^q \widehat{G}$ is profinite, $ \widehat{G}   \widehat{\wedge}^q  \widehat{G}$  is residually finite, hence  $G \wedge^q G$ is residually finite.
  \end{proof}
\begin{cor}
 Suppose that $G$ is a residually finite group such that $G \in \mathcal{A}_2$ and $q \geq 2$ is an integer.
Then $G \wedge^q G$ is residually finite.
\end{cor}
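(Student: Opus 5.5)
The plan is to reduce the statement to Lemma \ref{res-finite25}. Since $G$ is residually finite by hypothesis, it suffices to show that the canonical map $H_2(G, \mathbb{F}_q) \to H_2(\widehat{G}, \mathbb{F}_q)$ is injective. In fact I expect it to be an isomorphism, and this comes from the assumption $G \in \mathcal{A}_2$.

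The natural route is through cohomology with the finite trivial coefficient module $\mathbb{F}_q = \Z/q\Z$. We regard $\mathbb{F}_q$ as a finite discrete $\widehat{G}$-module with trivial action. Then $G \in \mathcal{A}_2$ gives an isomorphism
$$H^2(\widehat{G}, \mathbb{F}_q) \to H^2(G, \mathbb{F}_q)$$
induced by $G \to \widehat{G}$, and similarly in degree $1$.

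To pass to homology, I would use Pontryagin duality with coefficients $\Q/\Z$. For the discrete group $G$ we have $H^2(G, \mathrm{Hom}(\mathbb{F}_q, \Q/\Z)) \simeq \mathrm{Hom}(H_2(G, \mathbb{F}_q), \Q/\Z)$, since $\Q/\Z$ is injective. The profinite analogue is $H^2(\widehat{G}, \mathbb{F}_q^{\ast}) \simeq H_2(\widehat{G}, \mathbb{F}_q)^{\ast}$, where $\ast$ denotes the dual. This follows from the descriptions $H_n(\widehat{G}, A) \simeq \varprojlim_U H_n(G/U, A)$ and $H^n(\widehat{G}, B) \simeq \varinjlim_U H^n(G/U,B)$ recalled in the preliminaries, together with duality for the finite groups $G/U$; the dual of an inverse limit of finite groups is the direct limit of the duals. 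Note that $\mathbb{F}_q^{\ast} \simeq \mathbb{F}_q$.

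Naturality of these dualities identifies the map $H^2(\widehat{G},\mathbb{F}_q) \to H^2(G, \mathbb{F}_q)$ with the dual of the map $\theta: H_2(G,\mathbb{F}_q) \to H_2(\widehat{G},\mathbb{F}_q)$. Now $\mathrm{Hom}(-,\Q/\Z)$ is faithful and exact on abelian groups: if $\ker\theta \neq 0$, there is a character of $H_2(G,\mathbb{F}_q)$ that is nonzero on $\ker\theta$. Such a character is not in the image of $\theta^{\ast}$, so $\theta^{\ast}$ is not surjective. Surjectivity of the cohomology map therefore forces injectivity of $\theta$. Alternatively, one could quote the equality $\mathcal{A}_2 = \mathcal{A}_2^{hom}$ from \cite{BR-K} to get $\theta$ an isomorphism directly, but the paper says it does not rely on that result, so I would give the duality argument.

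Finally, Lemma \ref{res-finite25} applies and yields that $G \wedge^q G$ is residually finite. The main obstacle is the careful justification of the profinite duality isomorphism and its compatibility with the maps induced by $G \to \widehat{G}$. I would handle this at each finite level $G/U$, where it is the classical universal coefficient duality, and then pass to limits, using that the transition maps are natural.
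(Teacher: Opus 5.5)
Your argument is correct, but it takes a different route from the paper.

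\textbf{The paper's route.} The proof there is one line. It invokes the equality $\mathcal{A}_2=\mathcal{A}_2^{hom}$ from \cite{BR-K} to conclude that $H_2(G,\mathbb{F}_q)\to H_2(\widehat{G},\mathbb{F}_q)$ is an isomorphism, and then applies Lemma \ref{res-finite25}.

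\textbf{Your route.} You prove only the injectivity that Lemma \ref{res-finite25} actually needs, and you get it by dualising. At each finite level $H^2(G/U,\mathbb{F}_q)\simeq \mathrm{Hom}(H_2(G/U,\mathbb{F}_q),\mathbb{Q}/\mathbb{Z})$ naturally. Passing to the direct limit identifies inflation $H^2(\widehat{G},\mathbb{F}_q)\to H^2(G,\mathbb{F}_q)$ with $\theta^{\ast}$. Since characters into $\mathbb{Q}/\mathbb{Z}$ separate points, surjectivity of $\theta^{\ast}$ forces $\ker\theta=0$. Equivalently, a class in $H^2(G,\mathbb{F}_q)$ detecting a nonzero $x\in\ker\theta$ would be inflated from some $G/U$, where $x$ already vanishes.

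\textbf{Comparison.}
\begin{itemize}
\item Your route is self-contained. This fits the paper's own remark that it does not need \cite{BR-K}, even though the paper's proof of this corollary does use it.
\item Your route proves the corollary under a weaker hypothesis: inflation $H^2(\widehat{G},\mathbb{F}_q)\to H^2(G,\mathbb{F}_q)$ need only be surjective, and only for the single trivial module $\mathbb{F}_q$.
\item The paper's route gives more, namely that $\theta$ is an isomorphism. Your duality argument does not establish the isomorphism you say you expect: injectivity of the cohomology map only dualises to $\theta$ having dense image. That extra information is not needed here, and the paper pays for it by citing a nontrivial external theorem.
\end{itemize}
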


\begin{proof}  $G \in \mathcal{A}_2 = \mathcal{A}_2^{hom}$ implies that the map $H_2(G, \mathbb{F}_q) \to H_2(\widehat{G}, \mathbb{F}_q)$ is an isomorphism and we can apply Lemma \ref{res-finite25}.
\end{proof}
  
    \begin{prop} \label{iff25} Let $q \geq 2$ be an integer.
  Suppose that $G$ is a finitely generated residually finite group. Then $G \wedge^q G$ is residually finite if and only if the map $H_2(G, \mathbb{F}_q) \to H_2(\widehat{G}, \mathbb{F}_q)$, induced by the canonical map  $G \to \widehat{G}$, is injective.
  \end{prop}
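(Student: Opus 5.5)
One direction is already done. If $H_2(G,\mathbb{F}_q)\to H_2(\widehat{G},\mathbb{F}_q)$ is injective, Lemma \ref{res-finite25} gives that $G\wedge^q G$ is residually finite. So the work is the converse, which I would model on the proof of Proposition \ref{iff}.

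Write $G\simeq F/R$ with $F$ a finitely generated free group, and assume $G\wedge^q G$ is residually finite. Consider the short exact sequence
$$1\to \frac{[F,F]F^q}{[F,R]R^q}\to \frac{F}{[F,R]R^q}\to \frac{F}{[F,F]F^q}\to 1 .$$
The kernel is $G\wedge^q G$ by the formula recalled at the start of this section, so it is residually finite. The quotient $F/[F,F]F^q$ is a finite group, since it is a finitely generated abelian group of exponent dividing $q$. Because the kernel has finite index in $F/[F,R]R^q$, the middle group is residually finite. Concretely: for a nontrivial element $x$ outside the kernel, the finite quotient already separates it; for a nontrivial $x$ inside the kernel, take a finite index subgroup $N$ of the kernel missing $x$, and then its normal core in the middle group still has finite index and misses $x$. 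This step is simpler than in Proposition \ref{iff}: finiteness of the quotient removes any need for Proposition \ref{SerSer} or for finite generation of $G\wedge^q G$.

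Next I would show that the profinite completion of $E:=F/[F,R]R^q$ is $\widehat{F}/\overline{[R,F]R^q}$. Since $F$ is finitely generated, so is $E$, and its finite quotients are exactly the finite quotients of $F$ that kill $[F,R]R^q$; hence $\widehat{E}\simeq \widehat{F}/N$, where $N$ is the closure of $[F,R]R^q$ in $\widehat{F}$. One checks $N=\overline{[\overline{R},\widehat{F}]\,\overline{R}^q}$ by continuity of commutators and $q$-th powers. Residual finiteness of $E$ says $E\to\widehat{E}$ is injective. Restricting this injection to
$$\frac{R\cap[F,F]F^q}{[F,R]R^q}\simeq H_2(G,\mathbb{F}_q)$$
gives an injection into
$$\frac{\overline{R}\cap\overline{[\widehat{F},\widehat{F}]\widehat{F}^q}}{\overline{[\overline{R},\widehat{F}]\,\overline{R}^q}} .$$
By the profinite $q$-Hopf formula (the lemma at the start of this section), this last group is $H_2(\widehat{G},\mathbb{F}_q)$. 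That lemma needs $H_2(\widehat{F},\mathbb{F}_q)=0$, which holds because $\widehat{F}$ is a free profinite group, and it uses $\widehat{G}\simeq\widehat{F}/\overline{R}$.

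The main obstacle is the identification in the last step. One must check that the map induced on these quotients really is the canonical map $H_2(G,\mathbb{F}_q)\to H_2(\widehat{G},\mathbb{F}_q)$. This means checking that the discrete and profinite $q$-Hopf isomorphisms are natural with respect to $F\to\widehat{F}$, which follows from the naturality of the 5-term exact sequences of the LHS spectral sequences for $R\to F\to G$ and $\overline{R}\to\widehat{F}\to\widehat{G}$. A further point is that $\overline{[\widehat{F},\widehat{F}]\widehat{F}^q}$ is the closure of $[F,F]F^q$. This is automatic, because $[F,F]F^q$ has finite index in $F$ and so is open in the profinite topology.
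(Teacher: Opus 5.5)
Your proposal is correct and follows essentially the same route as the paper. Like the paper, you use the extension of $G\wedge^q G$ by the finite group $F/[F,F]F^q$ to get residual finiteness of $F/[F,R]R^q$. You then restrict its embedding into the profinite completion $\widehat{F}/\overline{[\overline{R},\widehat{F}]\,\overline{R}^q}$ to the $q$-Hopf subgroups, and take the converse from Lemma \ref{res-finite25}. Your extra checks fill in details the paper leaves implicit: the closure identifications, the naturality of the Hopf isomorphisms, and the remark that finite generation of the kernel is not needed.
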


\begin{proof}  Let $F$ be a finitely generated free group with a normal subgroup $R$ such that  $F/ R \simeq G$. Suppose that $G \wedge^q G$ is residually finite. Consider the short exact sequence of discrete groups
 \begin{equation} \label{129}  1 \to \frac{[F,F]F^q}{[R,F]R^q} \to \frac{F}{[R,F]R^q} \to \frac{F}{[F,F]F^q} \to 1 \end{equation}
Recall that  $$\frac{[F,F]F^q}{[R,F]R^q}  \simeq G \wedge^q G$$
Note that if $$1 \to N_0 \to E_0 \to G_0 \to 1$$ is a short exact sequence of groups  where $N_0$  is finitely generated, residually finite and $G_0$  is finite, then $E_0$ is residually finite. We apply this for the short exact sequence (\ref{129}) and conclude that $\frac{F}{[R,F]R^q}$ is residually finite and so the canonical map $$\beta: \frac{F}{[R,F]R^q}   \to \frac{ \widehat{F}}{\overline{[\overline{R},\widehat{F}]{\overline{R}}^q}} \hbox{ is injective,}$$ since  $\frac{ \widehat{F}}{\overline{[\overline{R},\widehat{F}]{\overline{R}}^q}}$ is the profinite completion of $\frac{F}{[R,F]R^q} $.
 Since $$H_2(G, \mathbb{F}_q) \simeq \frac{R \cap ([F,F] F^q)} {[R, F] R^q} \hbox{
  and }H_2( \widehat{G}, \mathbb{F}_q) \simeq  \frac{ \overline{R} \cap (\overline{[\widehat{F},\widehat{F}] \widehat{F}^q})} {\overline{[\overline{R}, \widehat{F}] \overline{R}^q}}$$ we can identify the restriction of $\beta$ to $\frac{R \cap ([F,F] F^q)} {[R, F] R^q}$ as  the canonical map $H_2(G, \mathbb{F}_q) \to H_2( \widehat{G}, \mathbb{F}_q )$. As $\beta$ is injective  $H_2(G, \mathbb{F}_q) \to H_2( \widehat{G}, \mathbb{F}_q )$ is injective.
  
  The converse is Lemma \ref{res-finite25}.
\end{proof}

Recall that $ \widehat{G}_p$ is the pro-$p$ completion of $G$. 

\begin{cor} Suppose that $G$ is a  residually $p$-finite group,  $p,q \geq 2$ are integers such that $p$ is prime and $q \geq 2$. Then 

 a) if the map $H_2(G, \mathbb{F}_q) \to H_2(\widehat{G}_p, \mathbb{F}_q) $, induced by the canonical map  $G \to \widehat{G}$, is injective then $ G \wedge^q G$ is residually $p$-finite,

b) if $G$ is finitely generated and $ G \wedge^q G$ is residually $p$-finite then  the map $H_2(G, \mathbb{F}_q) \to H_2(\widehat{G}_p, \mathbb{F}_q) $, induced by the canonical map  $G \to \widehat{G}_p$, is injective. 
\end{cor}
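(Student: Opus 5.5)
Part a) will follow the proof of Lemma \ref{res-finite25}, with the profinite completion replaced by the pro-$p$ completion. Let $U$ run over the normal subgroups of $G$ with $G/U$ a finite $p$-group. For each such $U$ the central extension (\ref{ellis1}) for $G/U$ gives
$$1 \to H_2(G/U, \mathbb{F}_q) \to (G/U)\wedge^q (G/U) \to [G/U,G/U](G/U)^q \to 1 .$$
We must check that $(G/U)\wedge^q(G/U)$ is a finite $p$-group. It is finite by (\ref{ellis1}), because $H_2(G/U,\mathbb{F}_q)$ is finite and $[G/U,G/U](G/U)^q$ is a finite $p$-group. The kernel $H_2(G/U,\mathbb{F}_q)$ is killed by both $q$ and $|G/U|$, so it is a $p$-group; if $p \nmid q$ it is trivial.

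Since all the groups are finite, the Mittag--Leffler condition holds, and passing to the inverse limit gives an exact row
$$1 \to \varprojlim_U H_2(G/U,\mathbb{F}_q) \to \varprojlim_U (G/U)\wedge^q(G/U) \to \overline{[\widehat{G}_p,\widehat{G}_p]\widehat{G}_p^q} \to 1 .$$
The middle term is a pro-$p$ group, and $\varprojlim_U H_2(G/U,\mathbb{F}_q) \simeq H_2(\widehat{G}_p,\mathbb{F}_q)$. We compare this row with $1 \to H_2(G,\mathbb{F}_q) \to G\wedge^q G \to [G,G]G^q \to 1$. The left vertical map is injective by hypothesis. The right vertical map is injective because $G$ is residually $p$-finite, so $G \to \widehat{G}_p$ is injective. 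The five lemma (injectivity part) then shows that $G\wedge^q G$ embeds in a pro-$p$ group, hence is residually $p$-finite.

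Part b) will adapt the proof of Proposition \ref{iff25}. Write $G = F/R$ with $F$ a finitely generated free group, and consider the extension (\ref{129}). The quotient $F/[F,F]F^q$ is a finite abelian group of exponent dividing $q$. This is the main obstacle: that quotient is a $p$-group only when $q$ is a power of $p$, so we cannot simply say "residually $p$-finite by finite $p$-group".

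To get around this, first handle the case $p \nmid q$. Then $H_2(\widehat{G}_p,\mathbb{F}_q)$ is a limit of groups $H_2(G/U,\mathbb{F}_q)$ that are killed by $q$ and by a power of $p$, hence is $0$. So the claim to prove is $H_2(G,\mathbb{F}_q)=0$. For this we argue directly inside $G\wedge^q G$: its central subgroup $H_2(G,\mathbb{F}_q)$ has exponent dividing $q$. Any finite $p$-quotient of $G\wedge^q G$ kills the $q$-torsion of a central subgroup of exponent prime to $p$, so residual $p$-finiteness forces $H_2(G,\mathbb{F}_q)=0$.

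In general, write $q = p^k m$ with $p\nmid m$; then $\mathbb{F}_q \simeq \mathbb{F}_{p^k}\oplus\mathbb{F}_m$. The $\mathbb{F}_m$-summand vanishes by the same argument, since it is the prime-to-$p$ part of a central subgroup. For the $p$-part we replace $F/[F,F]F^q$ by the further quotient $F/[F,F]F^{p^k}$, which is a finite $p$-group, and work with the subgroup $[F,F]F^{p^k}R/[R,F]R^q$. This subgroup contains the $p$-primary part of $H_2$ and has finite index in $F/[R,F]R^q$.

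Next we use the standard fact that an extension of a finitely generated residually $p$-finite group by a finite $p$-group is residually $p$-finite: the finite-index subgroups of the kernel that are normal in the whole group and of $p$-power index are cofinal. Granting this, an element of the $p$-part of $H_2(G,\mathbb{F}_q)$ that dies in the pro-$p$ completion would contradict the residual $p$-finiteness of the relevant extension. Via the pro-$p$ Hopf formula, the lemma preceding Lemma \ref{Wh-Wh} applied with $F_0 = \widehat{F}_p$, this gives injectivity of $H_2(G,\mathbb{F}_q)\to H_2(\widehat{G}_p,\mathbb{F}_q)$. The delicate point I expect is making this $p$-part and prime-to-$p$-part splitting compatible with the free presentation.
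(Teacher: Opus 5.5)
Your part a) is the paper's argument: Lemma \ref{res-finite25} with finite quotients replaced by finite $p$-quotients. Your diagnosis in b) is also correct. The paper proves b) by the one-line substitution into Proposition \ref{iff25}, which needs $F/[F,F]F^q$ to be a finite $p$-group, so it is literally valid only when $q$ is a power of $p$. For $q=p^k$ your plan coincides with the paper's. Your treatment of the case $p\nmid q$, and of the $\mathbb{F}_m$-summand in general, is correct: $H_2(G,\mathbb{F}_m)$ sits in $G\wedge^q G$ with exponent prime to $p$, so residual $p$-finiteness forces it to vanish.

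The gap is in the $p$-primary part when $q=p^k m$ with $k\ge 1$ and $m>1$.
\begin{itemize}
\item The group you pass to, $[F,F]F^{p^k}R/[R,F]R^q$, is not an extension of $G\wedge^q G=[F,F]F^q/[R,F]R^q$ by a finite $p$-group. Its quotient by $G\wedge^qG$ contains $[F,F]F^{p^k}/[F,F]F^q\cong(\mathbb{Z}/m)^d$, so the standard fact does not apply.
\item This group, like $F/[R,F]R^q$ itself, need not be residually $p$-finite. Take $G=\mathbb{Z}/p=F/R$ with $F=\mathbb{Z}$, $R=p\mathbb{Z}$ and $q=pm$, $p\nmid m$. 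Then the two groups are $\mathbb{Z}/pm$ and $\mathbb{Z}/p^2m$, even though the conclusion of b) holds there.
\item The only extension with $p$-group quotient is $1\to G\wedge^{p^k}G\to F/[R,F]R^{p^k}\to F/[F,F]F^{p^k}\to 1$. There you would need the central subgroup $H_2(G,\mathbb{F}_{p^k})$ to be separated by finite $p$-quotients.
\item What you actually know is weaker. Once $H_2(G,\mathbb{F}_m)=0$, $G\wedge^qG$ is the preimage of $[G,G]G^q$ in $G\wedge^{p^k}G$, a normal subgroup of finite index prime to $p$, and only this subgroup is known to be residually $p$-finite.
\end{itemize}

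Passing separation of central $p$-elements up to a $p'$-index overgroup is not formal. Let $N=G\wedge^qG$, $E=F/[R,F]R^q$, and let $V\le N$ be normal in $E$ with $N/V$ a finite $p$-group. In $\bar E=E/V$, the elements of $\bar N=N/V$ killed by every $p$-quotient of $\bar E$ form exactly $[\bar N,K]$. Here $K$ is a Hall $p'$-subgroup of the preimage of the $p'$-part of $\bar E/\bar N\cong F/[F,F]F^q$. A central element can lie in $[\bar N,K]$; for example $Z(Q_8)\le [Q_8,C_3]=Q_8$ in $SL(2,3)$.

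To close the gap you must produce, for each nontrivial $h$ in the $p$-part of $H_2$, such a $V$ with $h\notin V$ on which $K$ acts trivially. One sufficient condition is that the image of $V$ in $[G,G]G^q$ contains $[G,G]G^q\cap W$ for some normal subgroup $W$ of $p$-power index in $G$. Then $[G,G]G^{p^k}$ acts on $N/V$ by inner automorphisms composed with automorphisms trivial on the image of $H_2$ and on the quotient by it. These form a $p$-group, so $K$ acts trivially. However, residual $p$-finiteness of $G\wedge^qG$ alone does not supply such a $V$.

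As it stands, your argument proves b) only when $q$ is a power of $p$ or prime to $p$. The paper's substitution covers only the first of these, so it does not provide the missing step either.
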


\begin{proof} In the proof of Lemma \ref{res-finite25}, Proposition \ref{iff25}  we substitute finite quotients with $p$-finite quotients, profinite completions $\widehat{G}$  and $ \widehat{F}$ with pro-$p$ completions $\widehat{G}_p$ and $\widehat{F}_p$, residually finite with residually $p$-finite. 
\end{proof}

  \begin{prop} \label{agosto28}  Let $G$ be a finitely generated group, $F$ a finitely generated free group such that $G \simeq F/ R$. If
$F/[F,R]R^q$ is abelian-by-polycyclic, then $H_2(G, \mathbb{F}_q)$ is finite and $G$ and $G \wedge^q G$ are residually finite. 
\end{prop}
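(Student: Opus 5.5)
The plan is to imitate the proof of Proposition \ref{ab-pol1}, with $[F,R]$ replaced by $[F,R]R^q$, and then read off residual finiteness from the description $G \wedge^q G \simeq [F,F]F^q/[F,R]R^q$. Set $E = F/[F,R]R^q$.

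\textbf{Finiteness of $H_2(G,\mathbb{F}_q)$.} Choose a normal subgroup $M_0$ of $F$ containing $[F,R]R^q$ such that $M_0/[F,R]R^q$ is abelian and $F/M_0$ is polycyclic. Then $M_0/[F,R]R^q$ is a finitely generated $\mathbb{Z}(F/M_0)$-module, because $F/M_0$ is finitely presented. Since $\mathbb{Z}(F/M_0)$ is Noetherian, the submodule $(R\cap M_0)/[F,R]R^q$ is also finitely generated. Next, $R/(R\cap M_0)$ embeds in the polycyclic group $F/M_0$, so it is finitely generated. Hence $R/[F,R]R^q$ is finitely generated as a $\mathbb{Z}(F/M_0)$-module. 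But $F$ acts trivially on it by conjugation, so it is a finitely generated abelian group. It also has exponent dividing $q$, so it is finite. Its subgroup $(R\cap [F,F]F^q)/[F,R]R^q \simeq H_2(G,\mathbb{F}_q)$ is therefore finite.

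\textbf{Residual finiteness.} The group $E$ is finitely generated and abelian-by-polycyclic, so by Jategaonkar--Roseblade \cite{J}, \cite{R} it is residually finite. Its subgroup $[F,F]F^q/[F,R]R^q \simeq G\wedge^q G$ is then residually finite. For $G$ itself, note that $G = E/(R/[F,R]R^q)$ is a quotient of the residually finite group $E$ by a finite normal subgroup $N = R/[F,R]R^q$. Take a normal subgroup $U$ of finite index in $E$ with $U\cap N = 1$, which exists because $E$ is residually finite and $N$ is finite. Given $g\in E\setminus N$, the finitely many elements $gn$ with $n\in N$ are all nontrivial, so some finite index normal subgroup $U'$ excludes all of them. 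Then $(U\cap U')N/N$ is a finite index normal subgroup of $G$ not containing $gN$. Hence $G$ is residually finite.

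Alternatively, for $G$ one can observe directly that $G$ is a quotient of $E$ by a finite normal subgroup, and that $G$ is again abelian-by-polycyclic, being the image of $E$; this quotient is finitely generated, so it is residually finite.

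The main obstacle is the module-theoretic step. One must check that $M_0/[F,R]R^q$ is finitely generated over $\mathbb{Z}(F/M_0)$. This follows because $M_0$ is finitely generated as a normal subgroup of $F$, as the kernel of a map from a finitely generated group onto a finitely presented one. The remaining points are routine.
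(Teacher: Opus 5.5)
Your proof is correct and is essentially the paper's. You rerun the argument of Proposition \ref{ab-pol1} with $[F,R]R^q$ in place of $[F,R]$, which shows that $R/[F,R]R^q$ is finitely generated of exponent dividing $q$, hence finite. You then get residual finiteness of $F/[F,R]R^q$ and of its subgroup $G\wedge^q G\simeq [F,F]F^q/[F,R]R^q$ from Jategaonkar--Roseblade, exactly as the paper does. For $G$, the paper uses your ``alternative'' argument, namely that $G$ is a finitely generated abelian-by-polycyclic quotient. Your direct argument via the finite kernel $N$ is also valid, though the auxiliary subgroup $U$ is unnecessary because $U'N/N$ already separates $gN$.
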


\begin{proof} In the proof of Proposition \ref{ab-pol1} was shown that  the condition that $F/[F,R]$ is abelian-by-polycyclic  implies that $R/[F,R]$ is a finitely generated abelian group. 
If we repeat the same argument but start with $F/[F,R]R^q$ is abelian-by-polycyclic   we can conclude  that $R/[F,R]R^q$ is a finitely generated abelian group. Since $R/[F,R]R^q$ has  finite exponent it is finite. Then  $$H_2(G, \mathbb{F}_q) \simeq R \cap [F,F]F^q/ [F,R]R^q$$ is a subgroup of $R/[F,R]R^q$, hence is finite. 

Since $F/[F,R]R^q$  is finitely generated abelian-by-policyclic, it  is residually finite.
Since $G \wedge^q G$ is a subgroup of $F/[F,R]R^q$ then  $G \wedge^q G$ is residually finite too. Finally $G \simeq F/ R$ is a quotient of $F/[F,R]R^q$, hence $G \simeq F/ R$ is finitely generated, abelian-by-polycyclic and residually finite.

\end{proof}

 \begin{cor} \label{fin-pres2}
 Let $G$ be a finitely presented metabelian group and $q \geq 2$ be an integer. Then
 
 a) $F/ [F,R]R^q$ is abelian-by-polycyclic,
 
 b) the map $H_2(G, \mathbb{F}_q) \to H_2(\widehat{G}, \mathbb{F}_q)$ is injective,

 c)  $G \wedge^q G$ is residually finite. 
 \end{cor}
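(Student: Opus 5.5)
The plan is to reproduce the argument of Theorem \ref{fin-pres1} with $[F,R]$ replaced by $[F,R]R^q$. Write $G = F/R$ with $F$ a finitely generated free group and put $B_q = F/[F,R]R^q$. Since $G$ is finitely presented, $R$ is finitely generated as a normal subgroup of $F$. Hence $R/[F,R]R^q$ is a finitely generated abelian group of exponent dividing $q$, and so it is finite.

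For a), the key step is to show that $B_q$ is abelian-by-polycyclic. Here is the route I would take.

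\begin{itemize}
\item First, $B = F/[F,R]$ is a finitely presented centre-by-metabelian group. This was shown in the proof of Theorem \ref{fin-pres1}, using that $R/[R,F]$ is finitely generated.
\item By the result of Groves \cite{Groves}, $B$ is therefore abelian-by-polycyclic.
\item $B_q$ is the quotient of $B$ by the central subgroup $R^q[F,R]/[F,R]$.
\item A quotient of an abelian-by-polycyclic group is abelian-by-polycyclic: the image of the abelian normal subgroup is abelian and normal, and the corresponding quotient is a quotient of a polycyclic group.
\end{itemize}

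So a) follows directly. Alternatively, $B_q$ is finite-by-metabelian, and one could instead invoke Lemma \ref{sexta} or Theorem \ref{ab-pol}, but the quotient argument above is simpler.

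For c), apply Proposition \ref{agosto28}: given a), it shows that $G \wedge^q G \simeq [F,F]F^q/[F,R]R^q$ embeds in the finitely generated abelian-by-polycyclic group $B_q$. By Jategaonkar--Roseblade \cite{J}, \cite{R}, $B_q$ is residually finite, and hence so is $G \wedge^q G$.

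For b), I would argue as in Theorem \ref{fin-pres1}~a). The profinite completion of $B_q$ is $\widehat{F}/\overline{[\overline{R},\widehat{F}]\overline{R}^q}$. Since $B_q$ is residually finite, the map $B_q \to \widehat{B_q}$ is injective. Restrict this map to $R \cap [F,F]F^q/[F,R]R^q$, which by the Hopf-type formula above is $H_2(G,\mathbb{F}_q)$. Its image lies in $\overline{R} \cap \overline{[\widehat{F},\widehat{F}]\widehat{F}^q}/\overline{[\overline{R},\widehat{F}]\overline{R}^q}$, which by the profinite $q$-Hopf lemma (applied with $H_2(\widehat{F},\mathbb{F}_q)=0$ for free profinite $\widehat{F}$) is $H_2(\widehat{G},\mathbb{F}_q)$. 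This restriction is the canonical map, so it is injective.

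Equivalently, b) follows from c) together with Proposition \ref{iff25}, since $G$ is finitely generated and residually finite by Hall's theorem.

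The only step that needs care is the identification of the closures: that $\overline{[\overline{R},\widehat{F}]\overline{R}^q}$ is exactly the closure of $[R,F]R^q$ in $\widehat{F}$. This holds because $[F,R]R^q$ is normal in $F$, so its closure is a closed normal subgroup containing the generating commutators and $q$-th powers by continuity, and conversely. With that identification the argument is essentially formal.
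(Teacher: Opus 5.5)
Your proposal is correct and takes the paper's route: a) because $F/[F,R]R^q$ is a quotient of the abelian-by-polycyclic group $F/[F,R]$ from the proof of Theorem~\ref{fin-pres1}, c) from Proposition~\ref{agosto28}, and b) from Proposition~\ref{iff25}. Your direct profinite-completion argument for b) is just the proof of Proposition~\ref{iff25}, with the residual finiteness of $F/[F,R]R^q$ supplied by a) via Jategaonkar--Roseblade instead of by the finite-extension argument.
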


\begin{proof} Let $G \simeq F / R$ where $F$ is a finitely generated free group.
By the proof of Theorem \ref{fin-pres1} $F/ [F,R]$ is abelian-by-polycyclic, hence  $F/ [F,R]R^q$ is abelian-by-polycyclic.
Then we can apply Proposition \ref{iff25} and Proposition \ref{agosto28}.
\end{proof}

 Recall that  in \cite{Ellis2} a generalisation of $\nu(G)$  related to $G \otimes^q G$ and $G \wedge^q G$ was defined. It was later denoted by $\nu^q(G)$ in \cite{Bu-Ro} and further properties of $\nu^q(G)$ were investigated in \cite{Bu-Ro}.

\begin{prop} Let $ q \geq 1$ be an integer.
Suppose that $\nu(G)$ is a finitely generated abelian-by-polycyclic group. Then $\nu^q(G)$ is residually finite. In particular if $G$ is finitely presented, central-by-metabelian then $\nu^q(G)$ is residually finite.
\end{prop}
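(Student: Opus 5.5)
We will relate $\nu^q(G)$ to $\nu(G)$ through a structural description of $\nu^q(G)$ from \cite{Ellis2}, \cite{Bu-Ro}. Recall that $\nu^q(G)$ is generated by $G$, $G^{\varphi}$ and extra generators $\hat{k}$, $k \in G$, which represent $q$-th powers. The defining relations contain the relations of $\nu(G)$ together with relations saying how the $\hat{k}$ interact with $G$ and $G^{\varphi}$. Hence there is a canonical homomorphism $\nu(G) \to \nu^q(G)$. By \cite{Bu-Ro}, the subgroup $\Upsilon = \langle G, G^{\varphi}\rangle$ of $\nu^q(G)$ is normal, it is a quotient of $\nu(G)$, and
$$\nu^q(G) = \Upsilon \langle \hat{k} \mid k \in G\rangle .$$
The quotient $\nu^q(G)/\Upsilon$ is generated by the images of the $\hat{k}$. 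Its abelianization is a quotient of $G/[G,G]G^q$, since $\widehat{k_1k_2}\equiv \hat{k}_1\hat{k}_2$ modulo commutators. Moreover the $\hat{k}$ act on $\Upsilon$ like $k^q$ acts on $G^{\varphi}$. The first step is to pin down this structure precisely. We aim to show that $\nu^q(G)$ has a normal subgroup $N$, the image of $\nu(G)$, such that $\nu^q(G)/N$ is a finitely generated abelian group of exponent dividing $q$, and is therefore finite. If this fails, we fall back on the weaker statement that $\nu^q(G)/N$ is polycyclic. For $q=1$ we have $\nu^1(G)=\nu(G)$ and there is nothing to prove, so we assume $q\ge 2$.

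Second, $N$ is a quotient of the finitely generated abelian-by-polycyclic group $\nu(G)$. Quotients of finitely generated abelian-by-polycyclic groups are again finitely generated abelian-by-polycyclic: if $A$ is the abelian normal subgroup, its image is abelian and normal with polycyclic quotient. So $N$ is residually finite by the Jategaonkar–Roseblade theorem. In fact the whole group $\nu^q(G)$ is then abelian-by-(polycyclic-by-finite), or abelian-by-polycyclic if the fallback holds. To conclude, we note that a finitely generated abelian-by-polycyclic-by-finite group is residually finite. This holds because the image $A$ of the abelian normal subgroup of $\nu(G)$ can be chosen normal in $\nu^q(G)$: take a characteristic abelian subgroup, which Theorem \ref{ab-pol} provides in the situation of Theorem C, or intersect its finitely many conjugates. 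Alternatively, apply Proposition \ref{SerSer} to the extension $1\to N\to \nu^q(G)\to \nu^q(G)/N\to 1$, using that finite and polycyclic groups are good. The catch is that this needs $N$ finitely generated, which holds since $N$ has finite index in the finitely generated group $\nu^q(G)$. If $\nu^q(G)/N$ is finite, residual finiteness passes directly to finite extensions of residually finite groups with $N$ finitely generated.

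For the final clause, if $G$ is finitely presented and central-by-metabelian, Theorem C says $\nu(G)$ is finitely generated abelian-by-polycyclic, so the first part applies. The main obstacle is the first step: extracting from the presentation of $\nu^q(G)$ that $\nu(G)$ maps onto a normal subgroup with finite (or polycyclic) quotient. One must check that the relations $\hat{k}^{-1}$-conjugation on $G^{\varphi}$ equals conjugation by $k^q$, and that $\widehat{k_1k_2}$ differs from $\hat{k}_1\hat{k}_2$ by an element of $\Upsilon$. This uses the commutator expansion $(k_1k_2)^q = k_1^qk_2^q\cdot(\text{commutators})$ as in \cite{Ellis2}.
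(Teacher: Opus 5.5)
Your overall plan matches the paper's: take the image $N=\Upsilon$ of $\nu(G)$, show it is normal with a good, residually finite quotient, and apply Proposition \ref{SerSer}. However, the structural claim you put at the centre is false, and you never establish the correct one.

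\textbf{The quotient.} $\nu^q(G)/\Upsilon$ is not of exponent dividing $q$. By \cite[Prop. 2.6]{Bu-Ro}, which is what the paper cites, there is an exact sequence $\nu(G)\to\nu^q(G)\to G/G'\to 1$ with $\hat{k}\mapsto kG'$, so $\nu^q(G)/\Upsilon\cong G/G'$. For $G=\mathbb{Z}$ this quotient is infinite cyclic for every $q$, and no nontrivial power of $\hat{1}$ lies in $\Upsilon$. For the same reason $\nu^1(G)\neq\nu(G)$; it is $\nu^0(G)$ that equals $\nu(G)$. So $q=1$ is not a trivial case, although the general argument covers it.

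Your heuristic that the abelianization is a quotient of $G/[G,G]G^q$ has two defects:
\begin{enumerate}
\item nothing kills $\hat{k}^q$ modulo $\Upsilon$;
\item controlling only the abelianization would not make the quotient polycyclic.
\end{enumerate}
What you need is that $k\mapsto\hat{k}\Upsilon$ is a surjective homomorphism $G\to\nu^q(G)/\Upsilon$ (your observation about $\widehat{k_1k_2}$). You also need that it kills $G'$, which follows from the defining relation expressing $\widehat{[g,h]}$ as the $q$-th power of $[g,h^{\varphi}]\in\Upsilon$. Then the quotient is a homomorphic image of $G/G'$.

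\textbf{Consequences for your argument.} Since this quotient is in general infinite, only your fallback branch survives, and two of your justifications fail there.
\begin{enumerate}
\item You cannot deduce that $N$ is finitely generated from $N$ having finite index.
\item You cannot make an abelian normal subgroup of $N$ normal in $\nu^q(G)$ by intersecting finitely many conjugates. The characteristic subgroup from Theorem \ref{ab-pol} is available only in the situation of Theorem C. Even there you would have to check that its image is invariant under conjugation by the $\hat{k}$.
\end{enumerate}

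\textbf{The repair,} which is exactly the paper's proof:
\begin{enumerate}
\item $N$ is finitely generated because it is a quotient of the finitely generated group $\nu(G)$, as you yourself note. It is residually finite as a finitely generated abelian-by-polycyclic group.
\item $G$ is a quotient of $\nu(G)$, so $\nu^q(G)/N\cong G/G'$ is finitely generated abelian, hence good and residually finite.
\item Proposition \ref{SerSer} applied to $1\to N\to\nu^q(G)\to G/G'\to 1$ gives residual finiteness of $\nu^q(G)$.
\end{enumerate}
Your treatment of the final clause via Theorem C is fine.
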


\begin{proof} By \cite[Prop. 2.6]{Bu-Ro} there is an exact sequence $$\nu(G) \to \nu^q(G) \to G / G' \to 1$$ Let $S$ be the image of $\nu(G)$ in $\nu^q(G)$. Since $S$ is finitely generated, abelian-by-polycyclic it is residually finite. The class $\mathcal{A}_2$ is extension closed, hence every finitely generated abelian group is in $\mathcal{A}_2$, in particular $G/ G' \in \mathcal{A}_2$. Then by Proposition \ref{SerSer}  applied 
for the short exact sequence of groups $$1 \to S \to \nu^q(G) \to G/ G' \to 1$$ we conclude that $\nu^q(G)$ is residually finite. \end{proof}

In the case when $G$ is finitely presented, centre-by-metabelian the above proposition is   strengthened in the following result.

\begin{theorem} (= Theorem F) Let $ q \geq 1$ be an integer.
Suppose that  $G$ is finitely presented, central-by-metabelian group. Then there is an abelian characteristic subgroup $C_0$ of $\nu^q(G)$  such that $\nu^q(G)/ C_0$ is polycyclic.  In particular, $\nu^q(G)$, $G \otimes^q G$ and $G \wedge^q G$ are residually finite.
\end{theorem}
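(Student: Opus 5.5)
We will reduce Theorem F to Theorem C together with the structure of $\nu^q(G)$ relative to $\nu(G)$. By Theorem C (Corollary \ref{ThmDThm}) $\nu(G)$ is a finitely generated group with an abelian characteristic subgroup $C$ such that $\nu(G)/C$ is polycyclic. By \cite[Prop. 2.6]{Bu-Ro} there is an exact sequence $\nu(G) \to \nu^q(G) \to G/G' \to 1$. Let $S$ be the image of $\nu(G)$ in $\nu^q(G)$; it is normal with abelian quotient $\nu^q(G)/S \simeq G/G'$. Since $G$ is finitely presented, $G/G'$ is a finitely generated abelian group. The plan is to produce $C_0$ from the image of $C$ in $S$.

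Let $C_1$ be the image of $C$ in $S$. Then $C_1$ is abelian and $S/C_1$, being a quotient of $\nu(G)/C$, is polycyclic. First I would check that $C_1$ is normal in $\nu^q(G)$. I would try to show that the conjugation action of the extra generators of $\nu^q(G)$ (the elements $\mathcal{k}$-type generators $\overline{g}$ coming from $G^q$-part in \cite{Ellis2}, \cite{Bu-Ro}) on $S$ is induced by automorphisms of $\nu(G)$. Concretely, conjugation by such an element acts on $G$ and $G^{\varphi}$ as a combination of inner automorphisms of $\nu(G)$. Since $C$ is characteristic in $\nu(G)$ and the map $\nu(G)\to S$ is equivariant, $C_1$ is then normal in $\nu^q(G)$. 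Granting this, $\nu^q(G)/C_1$ is an extension of the polycyclic group $S/C_1$ by the finitely generated abelian group $G/G'$, so it is polycyclic.

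For the characteristic subgroup, I would replace $C_1$ by $C_0$, the subgroup generated by all images of $C_1$ under automorphisms of $\nu^q(G)$. The difficulty is that this might not be abelian. The alternative I would pursue first is to construct $C_0$ intrinsically, mimicking Theorem \ref{ab-pol}. Take $A$ to be a characteristic subgroup of $\nu^q(G)$, say the preimage of the relevant metabelian layer (for instance $[\nu^q(G),\nu^q(G)]$ times the characteristic central-type subgroup), so that $\nu^q(G)/A$ is abelian, $A'$ is finitely generated abelian, and $[A',\nu^q(G),\nu^q(G)]=1$. The last two properties are inherited from the corresponding facts for $\nu(G)$ established in the proof of Corollary \ref{ThmDThm}, namely that $W_0(G)B$ is finitely generated abelian and acted on nilpotently, transported to $S$. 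In addition, $\nu^q(G)$ is finitely presented, because $S$ is an image of the finitely presented $\nu(G)$ and by \cite{Ellis2}, \cite{Bu-Ro} $\nu^q(G)$ has a finite presentation when $G$ does. Theorem \ref{ab-pol} then yields a characteristic abelian $C_0$ with $\nu^q(G)/C_0$ polycyclic. The main obstacle is verifying the hypotheses of Theorem \ref{ab-pol} inside $\nu^q(G)$: finite presentability and the commutator identities for the new generators. I would check these against the defining relations of $\nu^q(G)$ in \cite{Bu-Ro}.

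Finally, $\nu^q(G)$ is finitely generated abelian-by-polycyclic and hence residually finite by Jategaonkar–Roseblade. Its subgroup $G\otimes^q G$ is therefore residually finite. Since $G\wedge^q G$ is the quotient of $G \otimes^q G$ by $\Delta^q(G)$, which is the image of $\Gamma(G/[G,G]G^q)$ (Lemma \ref{Wh-Wh}) and is central, it embeds in $\nu^q(G)/\Delta^q(G)$. That quotient is again finitely generated abelian-by-polycyclic, so $G\wedge^q G$ is residually finite as well.
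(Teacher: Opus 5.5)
\textbf{Gap: normality of $C_0$ in $\nu^q(G)$ is assumed, not proved, and the mechanism you offer for it is wrong.}

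Your first route is exactly the paper's proof. Take the characteristic abelian subgroup $C$ of $\nu(G)$ with $\nu(G)/C$ polycyclic from Theorem C. Let $S$ be the image of $\nu(G)$ in $\nu^q(G)$ and $C_0$ the image of $C$, and read off polycyclicity from $1\to S/C_0\to \nu^q(G)/C_0\to G/G'\to 1$. The only non-formal step is that $C_0$ is normal (let alone characteristic) in $\nu^q(G)$. You simply ``grant'' it, and without it $\nu^q(G)/C_0$ is not even defined. The paper handles this step by invoking functoriality of $\nu(G)\to\nu^q(G)$ in $G$.

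Your proposed justification is false. Put $h=k^q$. The relations $\hat k^{g}=\hat k^{g^{\varphi}}$ and $[g,\hat k]=[g,(k^q)^{\varphi}]$ of \cite{Bu-Ro} give
$g^{\hat k}=g^{h^{\varphi}}$ and $(g^{\varphi})^{\hat k}=g^{\varphi}[g,h^{\varphi}]$.
So modulo $W_0(G)$, conjugation by $\hat k$ acts on $\rho_0(\nu(G))\le G\times G\times G$ as conjugation by $(1,h,1)$. This is inner there only if $h\in Z(G)G'$, which fails for instance for $BS(1,2)=\langle a,t\mid a^t=a^2\rangle$ with $k=t$.

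What does hold is the following. Let $\iota\colon x\mapsto hxh^{-1}$, an inner automorphism of $G$, and define the automorphism $\beta(y)=\nu(\iota)(y)^{hh^{\varphi}}$ of $\nu(G)$. Then $\beta(g)=g^{h^{\varphi}}$, and $\beta(g^{\varphi})\equiv g^{\varphi}[g,h^{\varphi}]$ modulo the central subgroup $W_0(G)$. Hence conjugation by $\hat k$ on $S$ is a central automorphism of $S$ followed by the automorphism induced by $\beta$.
\begin{itemize}
\item The central automorphism fixes $[S,S]$ pointwise, and $C_0\le [S,S]$, because in the proof of Theorem C one has $C\le A=[\nu(G),\nu(G)]$.
\item $\beta(C)=C$ because $C$ is characteristic.
\end{itemize}
So $C_0\trianglelefteq \nu^q(G)$, which is all that polycyclicity and residual finiteness require. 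This is the content the appeal to functoriality has to supply. Functoriality alone only controls automorphisms induced from $\operatorname{Aut}(G)$, and the paper's ``characteristic'' claim rests on that bare assertion.

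\textbf{The fallback route is also unfinished.} You propose to apply Theorem \ref{ab-pol} directly to $\nu^q(G)$; three things are missing.
\begin{itemize}
\item Your finite-presentability argument is invalid: an image of the finitely presented group $\nu(G)$ need not be finitely presented, and the paper cites no such result for $\nu^q(G)$. A correct argument: $\nu(G)$ is finitely generated abelian-by-polycyclic, so it has max-$n$. Hence $S$ is finitely presented, and so is its extension $\nu^q(G)$ by $G/G'$.
\item The conditions on $A$ involving the generators $\hat k$ are exactly what you leave as ``the main obstacle''. They can be checked from the relations above together with $[g_1,g_2^{\varphi}]^{\hat k}=[g_1,g_2^{\varphi}]^{k^q}$ and $[\hat k,\hat k_1]=[k^q,(k_1^q)^{\varphi}]$. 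A suitable choice is $A=[\nu^q(G),\nu^q(G)]$ times the image of $W_0(G)B$.
\item Even then, Theorem \ref{ab-pol} gives a \emph{characteristic} $C_0$ only if $A$ is characteristic in $\nu^q(G)$, which you do not arrange.
\end{itemize}
As written, neither route proves the statement.
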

\begin{proof}
By Corollary \ref{ThmDThm} there is a characteristic subgroup $C$ of $\nu(G)$ such that $\nu(G)/ C$ is polycyclic. Since the map $\nu(G) \to \nu^q(G)$ is functorial on $G$, the image $S$ of $\nu(G)$ in $\nu^q(G)$ is characteristic in $\nu^q(G)$ and the image $C_0$ of $C$ in $\nu^q(G)$ is characteristic in $S$, hence is characteristic in $\nu^q(G)$. In particular $C_0$ is an abelian normal subgroup of $\nu^q(G)$. Finally there is a short exact sequence of groups $$ 1 \to S/ C_0 \to \nu^q(G)/ C_0 \to G/ G' \to 1$$ where $G/ G'$ is finitely generated abelian and $S/ C_0 \simeq \nu(G) / C$ is polycyclic. Hence $\nu^q(G)/ C_0$ is polycyclic. Using a similar argument to the one at the end of Corollary  \ref{ThmDThm}, we conclude that $\nu^q(G)$, $G \otimes^q G$ and $G \wedge^q G$ are abelian-by-polycyclic, thus residually finite.
\end{proof}

\end{document}